\newtheorem{theorem}{Theorem}[section]
\newtheorem{lemma}[theorem]{Lemma}
\newtheorem{corollary}[theorem]{Corollary}
\newtheorem{proposition}[theorem]{Proposition}
\theoremstyle{definition}
\newtheorem{defn}{Definition}
\def\XXint#1#2#3{{\setbox0=\hbox{$#1{#2#3}{\int}$}
         \vcenter{\hbox{$#2#3$}}\kern-.5\wd0}}
\def\R{\mathbb{R}}
\numberwithin{equation}{section}
\begin{document}

\title{Weak stability of the sum of two-solitary waves for Half-wave equation}

\author{Yuan Li }
\date{}
\maketitle

\begin{abstract}
In this paper, we consider the subcritical half-wave equation in one dimension. Let $R_k(t,x)$, $k=1,2$, represent two-solitary wave solutions of the half-wave equation, each with different translations $x_1,x_2$. We prove that if the relative distance $x_2-x_1$ between the two solitary waves is large enough, then the sum of $R_k(t)$ is weakly stable. Our proof relies on an energy method and the local mass monotonicity property. Unlike the single-solitary wave or NLS cases, the interactions between different waves are significantly stronger here. To establish the local mass monotonicity property, as well as to analyze non-local effects on localization functions and non-local operator $D$, we utilize the Carlder\'on estimate and the integral representation formula of the half-wave operator.

\noindent \textbf{Keywords:} Half-wave equation; Stability; two-solitary waves

\noindent{\bf 2020 Mathematics Subject Classification:} Primary 35Q55; Secondary  35B35
\end{abstract}

\section{Introduction and Main Result}
\noindent
We consider the half-wave equation of the form
\begin{equation}\label{equ-hf}
\begin{cases}
i\partial_tu-Du+|u|^{p-1}u=0,~~~(t,x)\in \mathbb{R}\times\mathbb{R}^N,\\
u(0)=u_0\in H^s,
\end{cases}
\end{equation}
where $D$ is defined via the Fourier transform by  $\widehat{(Df)}(\xi)=|\xi|\hat{f}(\xi)$, denotes the first-order nonlocal fractional derivative, $s\in(\frac{1}{2},1)$ and $p\in(1,\infty)$. Evolution problems like \eqref{equ-hf} arise in a variety of physical contexts, such as turbulence phenomena \cite{MMT1997JNS,CMMT2001PD}, wave propagation \cite{W1987CPDE}, continuum limits of lattice system \cite{KLS2013CMP} and models for gravitational collapse in astrophysics \cite{IP2014JFA,FL2013Acta,ES1983MMAS}. For further background on half-wave equation or fractional Schr\"odinger model in mathematics, numerics, and physics, one can see \cite{ES2007CPAM,FJL2007CMP,KSM2014PRSL,CHHO2013FE} and the references therein.

Let us review some basic properties of the equation \eqref{equ-hf}. The Cauchy problem \eqref{equ-hf} is an infinite-dimensional Hamiltonian system, which has the following three conservation quantities:\\
Energy:
\begin{align}\label{energy}
    E(u(t))=\frac{1}{2}\int_{\mathbb{R}^N}\left|D^{\frac{1}{2}}u(t,x)\right|^2dx-\frac{1}{p+1}\int_{\mathbb{R}^N}|u(t,x)|^{p+1}dx=E(u_0);
\end{align}
Mass:
\begin{align}\label{Mass}
    M(u(t))=\int_{\mathbb{R}^N}|u(t,x)|^2dx=M(u_0);
\end{align}
Momentum:
\begin{align*}
    P(u(t))=(i\nabla u(t,x),u(t,x))_{L^2}=P(u_0).
\end{align*}
Here we regard $L^2$ as a real Hilbert space equipped with the inner product
\begin{align*}
    (u,v)_{L^2}=\Re\int u\bar{v}dx.
\end{align*}
Equation \eqref{equ-hf} also admits the following symmetries.
\begin{itemize}
    \item Phase invariance: If $u(t,x)$ satisfies \eqref{equ-hf}, then for any $\gamma_0\in\mathbb{R}$, $u(t,x)e^{i\gamma_0}$  also satisfies \eqref{equ-hf};

    \item  Translation invariance: If $u(t,x)$ satisfies \eqref{equ-hf}, then for any $x_0\in\mathbb{R^N}$ and $t_0\in\mathbb{R}$, $u(t-t_0,x-x_0)$  also satisfies \eqref{equ-hf};

    \item  Scaling invariance: If $u(t,x)$ satisfies \eqref{equ-hf}, then for any $\lambda_0\in\mathbb{R}$, $\lambda_0^{\frac{1}{p-1}}u(\lambda_0t,\lambda_0x)$  also satisfies \eqref{equ-hf}.
\end{itemize}
It leaves invariant the norm in the homogeneous Sobolev space $\dot{H}^{s_c}$, where $s_c=\frac{N}{2}-\frac{1}{p-1}$. If $s_c<0$, the problem is mass-subcritical; if $s_c=0$, it is mass-critical and if $0<s_c<1$, it is mass super-critical and energy subcritical or just intercritical.

It is well known that the Cauchy problem is locally well-posed in Sobolev space $H^s$, for $s>\frac{1}{2}$. Specifically, Krieger, Lenzmann and Rapha\"el \cite{KLR2013ARMA} showed that equation \eqref{equ-hf} is locally well-posed in $H^s(\mathbb{R})$, $s>\frac{1}{2}$, and established local existence in $H^{\frac{1}{2}}(\mathbb{R})$ with $p=3$. For the higher dimensional case, Bellazzini, Georgiev and Visciglia \cite{BGV2018MN} proved that equation \eqref{equ-hf} is locally well-posed in $H^1_{rad}(\mathbb{R}^N)$, with $s_c<1$ and $N\geq2$; Furthermore, Hidano and Wang \cite{HW2019SM} improved this result and established local existence in the space $H^s_{rad}(\mathbb{R}^N)$,
$s\in(\frac{1}{2},1)$ and $N\geq2$, as well as in $H^s(\mathbb{R}^N)$, $s\in\left(\max\left\{\frac{N-1}{2},\frac{N+1}{4}\right\},p\right)$ and $s\geq s_c$, where $p>\max\left\{s_c,\frac{N-1}{2},\frac{N+1}{4},1\right\}$.
For more details on the half-wave equation \eqref{equ-hf}, one can see \cite{D2018DCDS,FGO2019JMPA} for local/global well-posedness, \cite{FGO2018DPDE,I2016PAMS,GL2021JFA,GL2022CPDE,GL2025JDE} for finite-time blow up, \cite{BGV2018MN} for stability/instability of ground states, \cite{FS2024JDE,BGLV2019CMP,GLPR2018APDE} for non-scattering traveling waves with arbitrary small mass, \cite{D2018DCDS} for ill-posedness for low-regularity data, and \cite{FGO2019JMPA} for the proof of various prior estimates.

This paper is concerned with questions related to special solutions of equation \eqref{equ-hf}, called the solitary wave solution, which are fundamental in the dynamics of the equation. For $\omega>0$, let
\begin{align}\label{standing}
    u(t,x)=e^{i\omega t}Q_{\omega}(x)
\end{align}
be the $H^{\frac{1}{2}}(\mathbb{R}^N)$ solution of \eqref{equ-hf} if $Q_{\omega}:\mathbb{R}^N\to\mathbb{R}$ is an $H^{\frac{1}{2}}(\mathbb{R}^N)$ solution of
\begin{align}\label{equ-elliptic}
    D Q_{\omega}+\omega Q_{\omega}=Q_{\omega}^p.
\end{align}

The first question concerning the solitary wave solutions of \eqref{equ-hf} is whether they are stable by perturbation of the initial data in the energy space, that is, whether or not the following property is satisfied.
\begin{defn}
A solitary wave solution of the form \eqref{standing} is weakly orbitally stable if for all $\epsilon>0$, there exists $\delta>0$ such that if
\begin{align*}
\|u_0-Q_{\omega_0}(\cdot-x_0)e^{i\gamma_0}\|_{H^{\frac{1}{2}}}\leq\delta,~~u_0\in H^s(\mathbb{R}^N),~~s\in\left(\frac{1}{2},1\right),
\end{align*}
then for all $t\in\mathbb{R}$, there exist $x(t)\in\mathbb{R}^N$ and $\gamma(t)\in\mathbb{R}$ such that  $u(t)$    satisfies
\begin{align*}
\|u(t)-Q_{\omega_0}(\cdot-x(t))e^{i\gamma(t)}\|_{H^{\frac{1}{2}}}\leq\epsilon.
\end{align*}
where $u(t)$ is the unique global solution of \eqref{equ-hf} associated with the initial data $u_0$.
\end{defn}
Due to the invariances of the half-wave equation, whether or not this property is satisfied does not depend on $x_0$ and $\gamma_0$. To the best of our knowledge, Bellazzini, Georgiev and Visciglia \cite{BGV2018MN} proved the stability or instability of the ground state to the half-wave equation \eqref{equ-hf}  by the classical argument of Cazenave-Lions (see \cite{CL1982CMP}). On the other hand, Cao, Su, and Zhang \cite{CSZ2024JLMS} constructed the multi-bubble blow-up solutions to the mass critical half-wave equation in one dimension; Meanwhile, G\'erard, Lenzmann, Pocovnicu and Rapha\"el \cite{GLPR2018APDE} constructed asymptotic global-in-time compact two-soliton solutions of \eqref{equ-hf} that have an arbitrarily small $L^2$-norm, where $p=3$ and $N=1$.


In this paper, our aim is to study the stability of the sum of two-solitary waves of the equation \eqref{equ-hf} by using the expansion of the conservation laws around the solitary wave. Now we state our main result.
\begin{theorem}\label{Thm1}(Stability of the sum of two solitary waves in one dimension).
Let $N=1$, $1<p<3$, and $\frac{1}{2}<s<1$. Assume that  there exist $Q_{\omega_k^0}\in H^\frac{1}{2}(\mathbb{R})$, where $\omega_k^0>0$ and $k=1,2$,  positive solutions of \eqref{equ-elliptic} satisfying
 \begin{align}\label{condition}
    \frac{d}{d\omega}\int_{\mathbb{R}}Q_{\omega}^2(x)dx\Big|_{\omega=\omega_k^0}>0.
\end{align}
For  $k\in\{1,2\}$, let $x_k^0\in\mathbb{R},\gamma_k^0\in\mathbb{R}$.  There exist  $\sigma_0>1$, $A_0>0$ and $\alpha_0>0$ such that for any $u_0\in H^s(\mathbb{R}^N)$, $\sigma>\sigma_0$, and $0<\alpha<\alpha_0$ if
\begin{align*}
    \left\|u_0-\sum_{k=1}^2Q_{\omega_k^0}(\cdot-x_k^0)e^{i\gamma_k^0}\right\|_{H^\frac{1}{2}}\leq\alpha,
\end{align*}
and
\begin{align}\label{def:sig0}
|x_1^0-x_2^0|\geq \sigma.
\end{align}
Then the solution $u(t)$ of \eqref{equ-hf} is globally defined in $H^s$, $s\in(\frac{1}{2},1)$ for all $t\geq0$, and there exist $C^1$-functions $\gamma_j(t)\in\mathbb{R}$, $x_j(t)\in\mathbb{R}$, $j=1,2$,  such that for all $t\geq0$,
\begin{align}\label{a:2}
     \left\|u(t)-\sum_{k=1}^2Q_{\omega_k^0}(\cdot-x_k(t))e^{i\gamma_k(t)}\right\|_{H^\frac{1}{2}}\leq A_0\left(\alpha+\frac{1}{\langle \sigma\rangle}\right),
\end{align}
where $A_0>0$ is a constant.
\end{theorem}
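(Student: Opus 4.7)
The plan is to follow the Martel--Merle--Tsai stability-by-modulation strategy, adapted to the nonlocal half-wave setting. First, under a running bootstrap assumption that $u(t)$ stays $H^{1/2}$-close to a sum of two translated/phased copies of $Q_{\omega_k^0}$, I apply the implicit function theorem to extract $C^1$ modulation functions $x_k(t), \gamma_k(t)$, with $x_k(0)$ near $x_k^0$ and $\gamma_k(0)$ near $\gamma_k^0$, so that the remainder
\begin{align*}
\eta(t,x) := u(t,x) - \sum_{k=1}^2 R_k(t,x), \qquad R_k(t,x) := Q_{\omega_k^0}(x - x_k(t))\, e^{i\gamma_k(t)},
\end{align*}
satisfies the orthogonality relations $(\eta(t), i R_k(t))_{L^2} = 0$ and $(\eta(t), \partial_x R_k(t))_{L^2} = 0$ for $k=1,2$. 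Differentiating these against equation \eqref{equ-hf} produces a $4\times 4$ linear system for $(\dot x_k,\dot\gamma_k)_{k=1,2}$, invertible for $\sigma$ large because the off-diagonal $j \ne k$ couplings are $O(1/\sigma)$ (from the algebraic tails of $Q_\omega$), giving the modulation bound
\begin{align*}
|\dot x_k(t)| + |\dot\gamma_k(t) - \omega_k^0| \lesssim \|\eta(t)\|_{L^2} + \frac{1}{\langle\sigma\rangle}.
\end{align*}

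The core of the argument is a bootstrap on $\|\eta(t)\|_{H^{1/2}}$. Fix $K \gg 1$ and let $T^\ast$ be the supremum of $T>0$ such that $\|\eta(t)\|_{H^{1/2}} \le K(\alpha + \langle\sigma\rangle^{-1})$ for $t\in[0,T]$; the goal is $T^\ast = +\infty$ via strict improvement. I introduce the Lyapunov functional
\begin{align*}
\mathcal{F}(t) := E(u(t)) + \sum_{k=1}^2 \omega_k^0 \int_{\mathbb{R}} |u(t,x)|^2 \psi_k(t,x)\,dx,
\end{align*}
where $\psi_1 + \psi_2 \equiv 1$ is a smooth partition of unity with $\psi_k \equiv 1$ near $x_k(t)$ and varying on the slow scale $\sigma$ (so $\|\psi_k'\|_{L^\infty} \lesssim 1/\sigma$). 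Two ingredients close the loop: \emph{almost-conservation} $|\mathcal F(t) - \mathcal F(0)| \lesssim 1/\langle\sigma\rangle$, which reduces to the local mass monotonicity $\bigl|\partial_t \int |u|^2 \psi_k\,dx\bigr| \lesssim 1/\langle\sigma\rangle$ announced in the abstract; and a \emph{coercivity expansion}, valid modulo the orthogonality directions,
\begin{align*}
\mathcal F(u(t)) - \mathcal F\Bigl(\sum_k R_k(t)\Bigr) \ge c\|\eta(t)\|_{H^{1/2}}^2 - \frac{C}{\langle\sigma\rangle},
\end{align*}
where the quadratic form reduces, up to $O(1/\sigma)$ cross-terms, to $\sum_k \langle L_k \eta_k, \eta_k\rangle$ with $L_k$ the linearized Hamiltonian around $Q_{\omega_k^0}$ and $\eta_k = \psi_k \eta$. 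The coercivity of each $L_k$ on the orthogonal complement of $\{iQ_{\omega_k^0}, \partial_x Q_{\omega_k^0}\}$ is exactly what the Vakhitov--Kolokolov condition \eqref{condition} guarantees, via the standard Weinstein--Grillakis--Shatah--Strauss mechanism.

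The main technical obstacle is the nonlocal operator $D$. Unlike $-\Delta$ in NLS, the commutator $[D, \psi_k]$ is not supported near the boundary of $\mathrm{supp}\,\psi_k$: its kernel has global tails that decay only polynomially in $\sigma$, so both the mass-monotonicity computation and the cross-terms in the coercivity expansion have to be controlled using nonlocal tools. I use the singular integral representation
\begin{align*}
Df(x) = c\,\mathrm{p.v.} \int_{\mathbb{R}} \frac{f(x) - f(y)}{(x-y)^2}\,dy,
\end{align*}
together with a Calder\'on-type commutator estimate $\|[D, \psi_k] f\|_{L^2} \lesssim \|\psi_k'\|_{L^\infty}\|f\|_{L^2} \lesssim \sigma^{-1}\|f\|_{L^2}$, refined so as to exploit that the functions differentiated are concentrated at distance $\gtrsim \sigma$ from $\mathrm{supp}\,\psi_k'$. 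The algebraic decay of $Q_\omega$ at infinity (in particular $Q_\omega(x)\sim C_\omega|x|^{-2}$ for $p=3$) then yields $\|R_1 R_2\|_{L^1}\lesssim 1/\sigma$ and analogous bounds for $\int R_1\,DR_2$ and the nonlinear cross-terms, which are the quantitative interaction losses driving the $1/\langle\sigma\rangle$ error budget.

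Combining the almost-conservation of $\mathcal F$ with the coercivity lower bound and the initial bound $\mathcal F(u_0) - \mathcal F\bigl(\sum_k Q_{\omega_k^0}(\cdot - x_k^0)e^{i\gamma_k^0}\bigr) \lesssim \alpha^2 + 1/\langle\sigma\rangle$, one obtains $\|\eta(t)\|_{H^{1/2}}^2 \lesssim \alpha^2 + 1/\langle\sigma\rangle$, which strictly improves the bootstrap assumption provided $K$ is large, $\alpha_0$ small and $\sigma_0$ large; hence $T^\ast = +\infty$ and \eqref{a:2} follows. Global $H^s$-wellposedness for $s\in(1/2,1)$ is a standard consequence of the uniform $H^{1/2}$ bound together with the subcritical local theory of \cite{KLR2013ARMA}.
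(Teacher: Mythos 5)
Your overall architecture (modulation, Lyapunov functional $E+\sum_k\omega_k\int|u|^2\psi_k$, Calder\'on commutator bounds for the nonlocal cross-terms, bootstrap) matches the paper's, but there are two genuine gaps that would make the argument fail as written. First, your orthogonality conditions are insufficient for coercivity. You freeze the frequencies at $\omega_k^0$ and impose only $(\eta,iR_k)_{L^2}=(\eta,\partial_xR_k)_{L^2}=0$, then claim that condition \eqref{condition} gives coercivity of the linearized Hamiltonian on the orthogonal complement of $\{iQ_{\omega_k^0},\partial_xQ_{\omega_k^0}\}$. It does not: $L^+_{\omega}$ has a negative eigenvalue, and the Vakhitov--Kolokolov condition only yields positivity of $L^+_\omega$ on vectors orthogonal to \emph{both} $Q_\omega$ and $\partial_xQ_\omega$ (this is exactly the hypothesis of Lemma \ref{lemma:linear:operator}). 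The missing constraint is $\Re(\eta,R_k)=0$, i.e.\ orthogonality of the real part of the error to $Q$. For a single wave one can trade this for the global mass constraint, but for two waves total mass conservation gives only one relation. The paper resolves this by modulating $\omega_k(t)$ as a third parameter per wave (Lemma \ref{lemma:decom:unique}, Corollary \ref{corollary:1}), imposing $\Re\int R_k\bar\epsilon=0$, and then closing the loop by showing $|\omega_k(t)-\omega_k(0)|$ is quadratic in $\epsilon$ using the auxiliary localized masses $\mathcal{J}_k^{\pm}$ (Step 4, Lemma \ref{lemma:para:larg}). Your proposal has no substitute for this step.

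Second, your almost-conservation claim does not follow from the stated monotonicity bound. You assert $\bigl|\partial_t\int|u|^2\psi_k\,dx\bigr|\lesssim\langle\sigma\rangle^{-1}$ with $\psi_k$ a fixed partition of unity varying on scale $\sigma$; integrating this over $[0,t]$ gives an error of order $t/\sigma$, which is useless for a global-in-time bootstrap. In the NLS and gKdV settings of Martel--Merle--Tsai this is rescued by the relative speeds of the solitons (the cutoff travels with the separating waves, or the flux has a sign), but here the solitary waves are standing and there is no separation mechanism. The paper's fix is to widen the cutoff in time, taking $\Phi_2(x)=\Phi\bigl(\tfrac{x-x_2}{(t+\sigma)^2}\bigr)$, so that every flux and commutator term in Lemma \ref{lemma:mass:loc} is $O((t+\sigma)^{-2})$ and hence time-integrable with total contribution $O(\sigma^{-1})$. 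Without this (or some other time-decaying mechanism) the error budget $\alpha+\langle\sigma\rangle^{-1}$ in \eqref{a:2} cannot be achieved uniformly in $t$.
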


{\bf Comments:}

1. The equation \eqref{equ-hf} ($N=1$ and $1<p<3$) is well-posed in $H^s$, $s>\frac{1}{2}$, can be found in Appendix, see Lemma \ref{Lemma:local}. In particular, if $N=1$ and $p=3$, the well-posedness result in $H^{\frac{1}{2}}$ can be deduced in a verbatim fashion as for the
so-called cubic Szeg\"o equation,  see \cite[Appendix D]{KLR2013ARMA}. However, when $p\neq3$, the well-posedness result in $H^{\frac{1}{2}}(\mathbb{R})$ is still unknown.

2. Assumption \eqref{condition}. Notice that by equation \eqref{equ-elliptic}, the function $S_{\omega}\in H^{\frac{1}{2}}$ defined by $S_\omega=\frac{\partial}{\partial\omega}Q_\omega$ satisfies $L_{\omega}^+S_\omega=-Q_\omega$, where
\[L_{\omega}^+=D+\omega-pQ_\omega^{p-1}.\]
Therefore, condition \eqref{condition} is equivalent to
\[(S_{\omega_k^0},Q_{\omega_k^0})>0.\]
Moreover, it turns out that \eqref{condition} implies
\begin{align*}
    \inf\left\{\frac{(L^+_{\omega_0}v,v)}{(v,v)},~v\in H^{\frac{1}{2}},~(v,Q_{\omega_0})=(v,Q^{\prime}_{\omega_0})=0\right\}>0.
\end{align*}
This implies that $L_{\omega}^+$ is coercivity in $H^{\frac{1}{2}}$ (see Lemma \ref{lemma:linear:operator}).


3. The structure of the problem is similar to the following two types of equations:

3.1). Mass subcritical nonlinear Schr\"odinger (NLS) equation
\begin{eqnarray*}
iu_t+\Delta u+|u|^{p-1}u=0.
\end{eqnarray*}
Martel, Merle and Tsai \cite{MMT2006Duke} proved the stability of the sum of two-solitary waves of this equation.

3.2). For the equation
\begin{equation*}
    u_t+\partial_x|D|^\alpha u+(u^p)_x=0,~~\text{for}~~p=2,3,4.
\end{equation*}
For $\alpha=2$ in the former equation, which corresponds to the subcritical generalized Korteweg-de Vires (gKdV) equation, Martel, Merle and Tsai \cite{MMT2002CMP} obtained the stability of the sum of $K$ solitons of gKdV equations; For $\alpha=1$, which is the generalized Benjamin-Ono (BO) equation, Kenig and Martel \cite{KM2009MRI} studied the asymptotic stability of the solitons,  and  Gustafson,  Takaoka and Tsai \cite{GTT2009JMP} studied the stability of the sum of  $K$-soliton solution for BO equation; For $\alpha\in(1,2)$, which is the fractional modified Korteweg-de Vries equation, Eychenne and Valet \cite{EV2023JFA} proved that the existence of solution behaving in large time as a sum of two strongly interacting solitary waves with different signs.

However, in the scenario of multi-solitary waves for the half-wave equation, different waves exhibit the strongest interaction. Therefore, we introduce a property of local monotonicity, related to mass, which is similar to the $L^2$- monotonicity property for the KdV equation used in \cite{MMT2002CMP} (see Lemma \ref{lemma:mass:loc}). Additionally, to deal with the non-local term, we utilize the Carder\'on estimate and the integration representation formula of the half-wave operator (see Lemma \ref{lemma:mass:loc} and \ref{lemma:coer:m2}).

This paper is organized as follows. In Section 2, we give some fundamental properties and Lemmas. In Section 3, we study the weak stability of a single solitary wave. In Section 4, we prove Theorem \ref{Thm1} and obtain the weak stability of the sum of two-solitary waves.

\section{Preliminaries}
In this section, we aim to introduce some fundamental properties and useful lemmas.
\begin{lemma}[\cite{DPV2012BSM}]
Let $s\in(0,1)$.
Then, for any $f\in \mathscr{S}(\mathbb{R})$,
\begin{align*}
    (-\Delta)^sf(x)&=C(s) P.V.\int_{\mathbb{R}}\frac{f(x+y)-f(x)}{|y|^{1+2s}}dy\notag\\
&=-\frac{1}{2}C(s)\int_{\mathbb{R}}\frac{f(x+y)+f(x-y)-2f(x)}{|y|^{1+2s}}dy,
\end{align*}
where the normalization constant is given by
\begin{align}\label{constant}
    C(s)=\left(\int_{\mathbb{R}}\frac{1-cosx}{|x|^{1+2s}}dx\right)^{-1}.
\end{align}
\end{lemma}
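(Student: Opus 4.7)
The plan is to show both displayed identities by reducing to the Fourier definition $\widehat{(-\Delta)^s f}(\xi)=|\xi|^{2s}\hat f(\xi)$, working with the symmetric (non-principal-value) formulation first and then deducing the principal-value form.

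First, I would establish the equivalence of the two integral expressions. Changing variables $y\mapsto -y$ in the principal-value integral gives
\begin{align*}
\mathrm{P.V.}\int_{\mathbb{R}}\frac{f(x+y)-f(x)}{|y|^{1+2s}}\,dy=\mathrm{P.V.}\int_{\mathbb{R}}\frac{f(x-y)-f(x)}{|y|^{1+2s}}\,dy,
\end{align*}
and averaging the two expressions produces the symmetric integrand $f(x+y)+f(x-y)-2f(x)$, which by a second-order Taylor expansion of $f\in\mathscr{S}(\mathbb{R})$ satisfies $|f(x+y)+f(x-y)-2f(x)|\lesssim \min(|y|^2\|f''\|_\infty,\|f\|_\infty)$. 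This bound makes the symmetric integrand absolutely integrable against $|y|^{-1-2s}$ near $0$ (since $s<1$) and at $\infty$ (since $s>0$), so the principal value can be dropped after symmetrization, yielding the second equality.

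Next I would compute the Fourier transform of the symmetric expression. Denote by $I[f](x)$ the symmetric integral. Since the integrand is in $L^1(\mathbb{R}^2,dx\,dy)$ after weighting by $1$ in $x$ (using the Schwartz decay of $f$ and the bound above), Fubini applies, and translation invariance gives
\begin{align*}
\widehat{I[f]}(\xi)=\hat f(\xi)\int_{\mathbb{R}}\frac{e^{iy\xi}+e^{-iy\xi}-2}{|y|^{1+2s}}\,dy=-2\hat f(\xi)\int_{\mathbb{R}}\frac{1-\cos(y\xi)}{|y|^{1+2s}}\,dy.
\end{align*}
The change of variables $z=y|\xi|$ produces
\begin{align*}
\int_{\mathbb{R}}\frac{1-\cos(y\xi)}{|y|^{1+2s}}\,dy=|\xi|^{2s}\int_{\mathbb{R}}\frac{1-\cos z}{|z|^{1+2s}}\,dz=\frac{|\xi|^{2s}}{C(s)},
\end{align*}
with the normalization $C(s)$ as in \eqref{constant}. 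Multiplying by the prefactor $-\tfrac12 C(s)$ then identifies the Fourier symbol as $|\xi|^{2s}$, i.e.\ the symbol of $(-\Delta)^s$, so by Fourier inversion $-\tfrac12 C(s)\,I[f](x)=(-\Delta)^s f(x)$, which combined with the symmetrization step proves both equalities.

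The only mildly delicate points are (i) checking that the scaling change of variables is valid for all $\xi\neq 0$ and that the integral defining $C(s)^{-1}$ converges precisely in the range $s\in(0,1)$ (standard: $1-\cos z\sim z^2/2$ at the origin and $|1-\cos z|\le 2$ at infinity), and (ii) justifying Fubini; both are straightforward from the Taylor bound above together with the Schwartz decay of $f$, so no essential obstacle is expected.
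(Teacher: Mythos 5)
Your proof is correct. The paper itself gives no proof of this lemma---it is quoted directly from Di Nezza--Palatucci--Valdinoci \cite{DPV2012BSM}---and your argument (symmetrization of the principal value via $y\mapsto -y$, the second-order Taylor bound to remove the P.V., and identification of the Fourier symbol $|\xi|^{2s}$ by Fubini and scaling) is essentially the standard proof given in that reference, with all the delicate points (convergence of $C(s)^{-1}$ for $s\in(0,1)$, justification of Fubini) correctly identified.
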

The following lemma provides useful formulas for the fractional
Laplacian of the product of two functions.
\begin{lemma}[\cite{BWZ2017ANS}]\label{lem-formula}
Let $s\in(0,1)$ and $f, g\in \mathscr{S}(\mathbb{R})$.
Then, we have
\begin{align}
    \label{id-fra2}
(-\Delta)^{s}(fg)-(-\Delta)^sfg
=C(s)\int\frac{f(x+y)(g(x+y)-g(x))-f(x-y)(g(x)-g(x-y))}{|y|^{1+2s}}dy,
\end{align}

and
\begin{align*}
(-\Delta)^{s}(fg)-(-\Delta)^sfg-f(-\Delta)^sg
=-C(s)\int\frac{(f(x+y)-f(x))(g(x+y)-g(x))}{|y|^{1+2s}}dy,
\end{align*}
where $C(s)$ is defined as in \eqref{constant}.
\end{lemma}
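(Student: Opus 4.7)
The plan is to establish both identities by direct substitution of the singular-integral representation from the preceding lemma into the left-hand sides, followed by elementary algebra and a symmetrization in the variable $y$. For $f,g\in\mathscr{S}(\mathbb{R})$ every integrand in sight is absolutely integrable---the second-difference bracket is $O(|y|^2)$ near $y=0$ and $f,g$ have rapid decay at infinity---so no cutoff or limiting procedure is required and all principal-value manipulations are honest Lebesgue integrals.

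For the first identity, I would apply the symmetric form
\[
(-\Delta)^{s} h(x) \;=\; -\tfrac{1}{2}\,C(s)\int_{\mathbb{R}}\frac{h(x+y)+h(x-y)-2h(x)}{|y|^{1+2s}}\,dy
\]
to both $h = fg$ and $h = f$, then combine them into a single integrand. Expanding $(fg)(x\pm y) = f(x\pm y)\,g(x\pm y)$ and grouping by factors of $g(x)$, the contributions involving only $f(x)g(x)$ cancel and what remains rearranges as
\[
f(x+y)\bigl(g(x+y)-g(x)\bigr) \;+\; f(x-y)\bigl(g(x-y)-g(x)\bigr),
\]
which is exactly the stated integrand once one writes $g(x-y)-g(x) = -(g(x)-g(x-y))$.

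For the second identity, I would either start from the first and subtract $f(x)(-\Delta)^{s} g(x)$ (using the symmetric form once more to bring $f(x)$ inside the integral), or redo the expansion from scratch on $(-\Delta)^{s}(fg) - g\,(-\Delta)^{s}f - f\,(-\Delta)^{s}g$. In either case the algebra consolidates into the integrand
\[
\bigl(f(x+y)-f(x)\bigr)\bigl(g(x+y)-g(x)\bigr) \;+\; \bigl(f(x-y)-f(x)\bigr)\bigl(g(x-y)-g(x)\bigr),
\]
after which the change of variables $y\mapsto -y$, which preserves the weight $|y|^{-1-2s}$, shows that the two summands integrate to the same quantity, yielding the claimed coefficient $-C(s)$.

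The only real point of care---and the spot where such a computation most easily goes astray---is bookkeeping the sign and the factor of $\tfrac{1}{2}$ that relates the principal-value form to the symmetric form of $(-\Delta)^{s}$, together with being consistent about when to symmetrize in $y$. There is no genuine analytic obstacle beyond that.
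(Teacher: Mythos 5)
The paper does not prove this lemma at all --- it is quoted verbatim from \cite{BWZ2017ANS} --- so there is no in-paper argument to compare against; your direct expansion via the second-difference representation, followed by symmetrization in $y$, is the natural (essentially the only) way to verify it, and for the \emph{second} identity it works exactly as you describe: the factor $\tfrac12$ in the symmetric form cancels against the doubling produced when the two summands $(f(x\pm y)-f(x))(g(x\pm y)-g(x))$ are identified under $y\mapsto -y$, yielding the coefficient $-C(s)$.

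However, the ``bookkeeping of the sign and the factor of $\tfrac12$'' that you explicitly defer is precisely where your proof of the \emph{first} identity does not close. Carrying out your own computation with the symmetric form gives
\begin{align*}
(-\Delta)^{s}(fg)-\bigl((-\Delta)^{s}f\bigr)g
=-\tfrac{1}{2}C(s)\int\frac{f(x+y)\bigl(g(x+y)-g(x)\bigr)-f(x-y)\bigl(g(x)-g(x-y)\bigr)}{|y|^{1+2s}}\,dy,
\end{align*}
and here --- unlike in the second identity --- the displayed integrand is \emph{already} invariant under $y\mapsto -y$, so no further symmetrization can convert the prefactor $-\tfrac12 C(s)$ into the stated $+C(s)$. (Starting instead from the principal-value form as printed in the paper one gets $+\tfrac12 C(s)$; either way one is off from the statement by a factor of $2$, and possibly a sign depending on the convention for $(-\Delta)^s$.) The discrepancy is almost certainly a harmless transcription issue in the lemma as quoted, since only the structure and decay of the integrand are used downstream, but your claim that the algebra lands ``exactly'' on the stated formula is not correct as written: a complete proof must either track the constant honestly and note the correction, or adopt the convention under which the stated normalization actually holds.
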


We recall the following commutator estimate.
\begin{lemma}(\cite[Theorem 2]{C1965PNA})\label{lemma:commu:1}
For $\nabla f\in L^\infty(\mathbb{R})$ and $g\in \mathscr{S}(\mathbb{R})$,
we have
\begin{align*}
 \|D(fg)-fDg\|_{L^2}\leq C\|\nabla f\|_{L^\infty}\|g\|_{L^2}.
\end{align*}

\end{lemma}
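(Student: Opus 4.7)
My strategy is to rewrite $[D, f]g := D(fg) - f\, Dg$ as an explicit principal-value singular integral via Lemma 2.1, and then reduce the $L^2$ estimate to the classical $L^2$-boundedness of the Hilbert transform through a family of Calder\'on-type commutators. Specialising Lemma 2.1 to $s = \tfrac{1}{2}$ and applying the resulting formula to $fg$ and to $g$ (multiplied by $f(x)$), the $-2f(x)g(x)$ diagonal terms cancel on subtraction; symmetrising the remainder in $y \mapsto -y$ and changing variables $z = x + y$ yields
\[
[D, f] g(x) \;=\; c\, \mathrm{P.V.}\!\int_{\mathbb R} \frac{f(y) - f(x)}{(y - x)^2}\, g(y)\, dy
\]
for an absolute constant $c$. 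Using the Hadamard factorisation $f(y) - f(x) = (y - x) \int_0^1 \nabla f(x + t(y - x))\, dt$ and Fubini then produces the one-parameter family
\[
[D, f] g(x) \;=\; c \int_0^1 T_t g(x)\, dt, \qquad T_t g(x) \;:=\; \mathrm{P.V.}\!\int_{\mathbb R}\frac{\nabla f\bigl(x + t(y - x)\bigr)}{y - x}\, g(y)\, dy.
\]
The lemma will follow once I establish $\|T_t g\|_{L^2} \leq C \|\nabla f\|_\infty \|g\|_{L^2}$ uniformly in $t \in [0, 1]$ and integrate in $t$.

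\textbf{Uniform bound on $T_t$ and main obstacle.} The endpoint cases are immediate: $T_0 g = \pi(\nabla f) \cdot H g$ and $T_1 g = \pi H\bigl((\nabla f)\, g\bigr)$, where $H$ is the Hilbert transform, so both are bounded by $\|\nabla f\|_\infty \|g\|_{L^2}$ since $H$ is an $L^2$-isometry. The real difficulty is the uniform bound for intermediate $t \in (0, 1)$, because the kernel $K_t(x, y) = \nabla f(x + t(y - x))/(y - x)$ is not of convolution type and standard Fourier-multiplier estimates do not apply. This is precisely the setting of Calder\'on's first commutator, and the uniform $L^2$-boundedness with constant $C\|\nabla f\|_\infty$ is the content of the cited 1965 theorem. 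An equivalent route, which I would take if proving it from scratch, is to view $T_t$ as a bilinear operator acting on $(\nabla f, g)$ whose Fourier symbol is $m_t(\alpha, \beta) = -i\pi\, \mathrm{sgn}(t\alpha + \beta)$: this symbol is bounded and homogeneous of degree zero with uniform smoothness off the hyperplane $\{t\alpha + \beta = 0\}$, and direct verification of the $T(1)$ conditions of David-Journ\'e (or the Coifman-Meyer bilinear multiplier theorem applied after an affine change of variables reducing to a standard Hilbert transform paired with an $L^\infty$ multiplier) yields the $L^\infty \times L^2 \to L^2$ bound uniformly in $t \in [0, 1]$. Integration in $t$ over $[0, 1]$ completes the argument.
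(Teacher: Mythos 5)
The paper does not actually prove this lemma; it is quoted directly from Calder\'on's 1965 paper (Theorem 2 there is the $L^p$-boundedness of the operator with kernel $(F(x)-F(y))/(x-y)^2$ for Lipschitz $F$), so there is no in-paper argument to compare against. Your derivation of the kernel identity $[D,f]g(x)=c\,\mathrm{P.V.}\!\int (f(y)-f(x))(y-x)^{-2}g(y)\,dy$ from the singular-integral representation of $D$ is correct, and it is exactly the reduction needed to match the operator-theoretic statement here to the form in which Calder\'on proves it; the endpoint cases $t=0,1$ are also handled correctly. If you stop there and invoke the citation for the remaining bound, you have done no less than the paper does.

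The difficulty lies in the two routes you sketch for the core step, and both have genuine gaps. First, the $T(1)$ theorem cannot be applied to the individual operators $T_t$: their kernels $\nabla f(x+t(y-x))/(y-x)$ satisfy the size bound but not the regularity (H\"ormander) condition with a constant controlled by $\|\nabla f\|_{L^\infty}$ alone (you would need $\nabla^2 f\in L^\infty$), so ``verifying the David--Journ\'e conditions'' for $m_t$ is not a meaningful step there. The $T(1)$ theorem does give a clean self-contained proof, but only when applied to the \emph{full} commutator: the kernel $c(f(y)-f(x))/(y-x)^2$ is an antisymmetric standard kernel with Calder\'on--Zygmund constants $\lesssim\|\nabla f\|_{L^\infty}$, so the weak boundedness property and the $T^*1$ condition are automatic, and the only nontrivial check is $T1=c\pi H(\nabla f)\in BMO$, which follows from $H:L^\infty\to BMO$. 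Second, and more seriously, the claim that an affine change of variables reduces $T_t$, $t\in(0,1)$, to ``a standard Hilbert transform paired with an $L^\infty$ multiplier'' is false: after any such substitution the argument of $g$ still depends jointly on $x$ and the integration variable, the symbol $\mathrm{sgn}(t\alpha+\beta)$ is singular along a non-degenerate line, and $T_t$ is a genuine bilinear Hilbert transform, whose $L^\infty\times L^2\to L^2$ bound (let alone uniformly in $t$) belongs to the Lacey--Thiele/uniform-estimates circle of results --- far deeper than the commutator theorem itself, and historically the route Calder\'on tried and abandoned. What makes the commutator tractable is precisely that one integrates in $t$ \emph{before} estimating: the averaged symbol $\int_0^1\mathrm{sgn}(t\alpha+\beta)\,dt$ is bounded, homogeneous of degree zero and singular only on degenerate lines, so a Coifman--Meyer-type or $T(1)$ argument applies to the integrated operator but not to each $T_t$ separately. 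Either keep the citation for the key bound, as the paper does, or run $T(1)$ on the full commutator kernel; do not route the proof through uniform bounds on $T_t$.
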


Next we give the following existence and uniqueness results of \eqref{equ-elliptic}.
\begin{lemma} (see \cite{FL2013Acta,FLS2016CPAM})
If $\omega>0$ and $s_c<1$. Then the following hold.

(i) Problem \eqref{equ-elliptic} has a positive, radial symmetry ground state solution $Q_{\omega}$;

(ii) The ground state solution $Q_{\omega}$ is unique up to translation;

(iii) There exists $C>0$ such that
\begin{align}\label{decay}
    |Q_{\omega}(x)|\leq\frac{C}{\langle x\rangle^{N+1}}~~\text{for all}~~x\in\mathbb{R}^N.
\end{align}

(iv) Now define the operator
\begin{align*}
    L_{\omega}=(L_{\omega}^+,L_{\omega}^-),
\end{align*}
where
\begin{align*}
    L_{\omega}^+v=D v+{\omega} v-pQ_{\omega}^{p-1}v,~~L_{\omega}^-v=D v+{\omega} v-Q_{\omega}^{p-1}v,~~v\in H^{\frac{1}{2}}(\mathbb{R}^N).
\end{align*}
Then we have
\begin{align*}
    \ker L_{\omega}^+=span\{\nabla Q\},~~ \ker L_{\omega}^-=span\{ Q\}.
\end{align*}
\end{lemma}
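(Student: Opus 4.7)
The statement is a compilation of classical results on ground states of the fractional elliptic equation $DQ+\omega Q=Q^p$ with $\omega>0$ and $s_c<1$, for which the main references \cite{FL2013Acta,FLS2016CPAM} are already cited. My plan is therefore not to reprove these deep theorems from scratch but to indicate the structure of the arguments one would invoke, part by part, and to flag the genuinely hard step.

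For (i), I would obtain existence variationally. Set
\[
J(u)=\frac{\bigl(\int |D^{1/2}u|^2+\omega\int u^2\bigr)^{(p+1)/2}}{\int |u|^{p+1}}
\]
on $H^{1/2}(\mathbb{R}^N)\setminus\{0\}$, which is scale-invariant and whose critical points solve \eqref{equ-elliptic} up to rescaling. Because $s_c<1$, the embedding $H^{1/2}\hookrightarrow L^{p+1}$ is continuous, and the fractional P\'olya--Szeg\H{o} inequality (which holds for the $H^{1/2}$ seminorm) lets me restrict a minimizing sequence to nonnegative radial decreasing functions. On the radial subspace the embedding $H^{1/2}_{\mathrm{rad}}\hookrightarrow L^{p+1}$ is compact in the subcritical range, so the minimizing sequence converges strongly after extracting a subsequence; the limit, after rescaling by a Lagrange multiplier, yields a positive radial ground state $Q_\omega$. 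Strict positivity follows from the strong maximum principle for $D+\omega$, which is a consequence of its positivity-preserving resolvent kernel.

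For (ii) and (iv) I would quote the uniqueness/nondegeneracy theorems of Frank--Lenzmann \cite{FL2013Acta} in dimension one and Frank--Lenzmann--Silvestre \cite{FLS2016CPAM} in higher dimension. The inclusions $\nabla Q\in\ker L_\omega^+$ and $Q\in\ker L_\omega^-$ are immediate from translation invariance and from \eqref{equ-elliptic} respectively; the fact that these are the \emph{entire} kernels (nondegeneracy) is the hard part. The standard route goes through the Caffarelli--Silvestre extension, which turns $D+\omega$ into a local degenerate-elliptic operator on $\mathbb{R}^{N+1}_+$; one then performs an ODE analysis in the radial variable combined with Perron--Frobenius-type arguments to pin down the Morse index of $L_\omega^+$ (equal to one, with $Q_\omega$ as the unique negative direction) and of $L_\omega^-$ (equal to zero). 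This is the principal obstacle, but it is precisely what \cite{FL2013Acta,FLS2016CPAM} resolve.

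For (iii) I would use the integral representation $Q_\omega=(D+\omega)^{-1}(Q_\omega^p)$ and the pointwise decay of the resolvent kernel $G_\omega$ of $D+\omega$. Writing
\[
Q_\omega(x)=\int_{\mathbb{R}^N} G_\omega(x-y)\,Q_\omega^p(y)\,dy,
\]
one exploits the known asymptotics $G_\omega(x)\lesssim \langle x\rangle^{-(N+1)}$ for $|x|$ large (the ``$|\xi|$-symbol'' produces a kernel behaving like $|x|^{-(N+1)}$ at infinity, in contrast to exponential decay for $-\Delta+\omega$). A standard bootstrap argument then promotes decay: first $Q_\omega\in L^\infty\cap L^2$ by Sobolev, next $Q_\omega(x)\to 0$ as $|x|\to\infty$ via the integral formula, and finally comparing with the kernel one gets $|Q_\omega(x)|\lesssim \langle x\rangle^{-(N+1)}$, which is optimal for this operator. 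Radial monotonicity from the variational step guarantees that this decay is uniform in direction. With (i)--(iv) so obtained, the lemma is established; the only delicate ingredient is the nondegeneracy input in (iv), which I would take directly from \cite{FLS2016CPAM}.
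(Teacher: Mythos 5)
The paper offers no proof of this lemma at all: it is stated as a known result with a pointer to \cite{FL2013Acta,FLS2016CPAM}, which is exactly where your proposal also sends the genuinely hard ingredients (uniqueness and the nondegeneracy statement in (iv)), while your sketches of the variational existence, the maximum-principle positivity, and the $\langle x\rangle^{-(N+1)}$ decay via the resolvent kernel of $D+\omega$ are the standard arguments those references use. One small caution: the blanket claim that $H^{1/2}_{\mathrm{rad}}\hookrightarrow L^{p+1}$ is compact fails for $N=1$ (the dimension of the paper's main theorem), so there you must lean on the radially decreasing rearrangement's pointwise decay $|u(x)|\lesssim |x|^{-N/2}\|u\|_{L^2}$ (or concentration--compactness) rather than a Strauss-type lemma; with that adjustment the proposal is consistent with the paper's treatment.
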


The following lemma is the positive property of the operator $L_{\omega}$.
\begin{lemma}\label{lemma:linear:operator}
Assuming that $\omega>0$, the following hold.

(i) There exists $\lambda_+>0$ such that for any real-valued function $v\in H^{\frac{1}{2}}(\mathbb{R}^N)$,
\begin{align*}
    (L_{\omega}^+v,v)\geq\lambda_+\|v\|^2_{H^{\frac{1}{2}}(\mathbb{R}^N)}~~\text{for}~~(v,Q_{\omega})=(v,\nabla Q_{\omega})=0.
\end{align*}

(ii) There exists $\lambda_->0$ such that for any real-valued function $v\in H^{\frac{1}{2}}(\mathbb{R}^N)$,
\begin{align*}
    (L_{\omega}^-v,v)\geq\lambda_-\|v\|^2_{H^{\frac{1}{2}}(\mathbb{R}^N)}~~\text{for}~~(v,Q_{\omega})=0.
\end{align*}
\end{lemma}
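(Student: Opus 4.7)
The plan is to adapt the classical Weinstein/Grillakis-Shatah-Strauss coercivity argument to the nonlocal operator $D$. First I would establish the spectral picture of the self-adjoint operators $L_\omega^\pm$ on $L^2(\mathbb{R})$. The decay \eqref{decay} makes multiplication by $Q_\omega^{p-1}$ a relatively compact perturbation of $D+\omega$, so Weyl's theorem gives essential spectrum $\sigma_{\mathrm{ess}}(L_\omega^\pm)=[\omega,\infty)$. Since $e^{-tD}$ on $\mathbb{R}$ has a strictly positive Poisson kernel, it is positivity-improving, and a Perron-Frobenius argument combined with $L_\omega^- Q_\omega=0$ (which follows from \eqref{equ-elliptic}) shows that $0$ is a simple ground eigenvalue of $L_\omega^-$, so $\ker L_\omega^-=\mathrm{span}\{Q_\omega\}$, $L_\omega^-\geq 0$, and there is a strictly positive spectral gap. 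The same reasoning gives $L_\omega^+$ a unique simple negative eigenvalue $-\mu$ with strictly positive eigenfunction $\chi$; the non-degeneracy $\ker L_\omega^+=\mathrm{span}\{Q_\omega'\}$ is the Frank-Lenzmann-Silvestre result already cited.

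For part (ii), the spectral gap at the simple zero eigenvalue of $L_\omega^-$ immediately yields $(L_\omega^- v,v)\geq\delta_-\|v\|^2_{L^2}$ on $\{Q_\omega\}^\perp$. For part (i) I would prove $L^2$-coercivity by contradiction. Set
\[ \Lambda:=\inf\{(L_\omega^+ v,v):v\in H^{1/2},\ \|v\|_{L^2}=1,\ (v,Q_\omega)=(v,Q_\omega')=0\}, \]
and suppose $\Lambda\leq 0$. Since $\sigma_{\mathrm{ess}}(L_\omega^+)\subset(0,\infty)$, a minimizer $v_\ast$ exists, satisfying $L_\omega^+ v_\ast=\Lambda v_\ast+\mu_1 Q_\omega+\mu_2 Q_\omega'$. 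Pairing with $Q_\omega'$ and using $L_\omega^+ Q_\omega'=0$, $(Q_\omega,Q_\omega')=0$ gives $\mu_2=0$. The case $\mu_1=0$ would force $v_\ast$ to be an eigenfunction of $L_\omega^+$ with eigenvalue $-\mu$ or $0$, each excluded by the constraints together with $(\chi,Q_\omega)>0$. Hence $\mu_1\neq 0$ and $v_\ast=\mu_1(L_\omega^+-\Lambda)^{-1}Q_\omega$, so the constraint $(v_\ast,Q_\omega)=0$ reduces to $f(\Lambda):=((L_\omega^+-\Lambda)^{-1}Q_\omega,Q_\omega)=0$. But $f$ is smooth and non-decreasing on $(-\mu,0]$, with $f(0)=-(S_\omega,Q_\omega)<0$ (via $L_\omega^+ S_\omega=-Q_\omega$ and hypothesis \eqref{condition}) and $f\to-\infty$ as $\Lambda\to-\mu^+$ (since $(\chi,Q_\omega)>0$), so $f<0$ throughout the interval — contradiction. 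Thus $\Lambda>0$.

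To upgrade from $L^2$ to $H^{1/2}$-coercivity I would write
\[ (L_\omega^\pm v,v)=\|D^{1/2}v\|^2_{L^2}+\omega\|v\|^2_{L^2}-a_\pm\int Q_\omega^{p-1}v^2\,dx,\qquad a_+=p,\ a_-=1, \]
and combine this identity with the $L^2$ lower bound $(L_\omega^\pm v,v)\geq c_0\|v\|^2_{L^2}$ on the orthogonal complement via a convex combination to obtain $(L_\omega^\pm v,v)\geq c\|v\|^2_{H^{1/2}}$. The main obstacle is the nonlocal spectral step — in particular the simplicity of the lowest eigenvalues of $L_\omega^\pm$ — which is not covered by the classical elliptic Perron-Frobenius theorems; it is handled by exploiting the strictly positive Poisson kernel of $e^{-tD}$ on $\mathbb{R}$ (equivalently, the Beurling-Deny criterion), which makes the perturbed semigroup $e^{-tL_\omega^\pm}$ positivity-improving after the compact-potential step.
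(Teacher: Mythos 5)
The paper does not actually prove this lemma: it only refers to \cite{KLR2013ARMA} and to the author's earlier works, where precisely the argument you outline (Weyl's theorem for the essential spectrum, a Perron--Frobenius argument via the positivity-improving semigroup $e^{-tD}$, and the Weinstein-type resolvent function $f(\Lambda)=((L_\omega^+-\Lambda)^{-1}Q_\omega,Q_\omega)$ combined with hypothesis \eqref{condition}, followed by the convex-combination upgrade from $L^2$- to $H^{1/2}$-coercivity) is carried out; so your proposal is correct and is essentially the intended proof. One step is stated too quickly: the positivity-improving argument gives simplicity of the \emph{bottom} eigenvalue of $L_\omega^+$ together with a positive ground eigenfunction $\chi$, but the fact that $-\mu$ is the \emph{only} negative eigenvalue (Morse index one) does not follow from ``the same reasoning''; it uses the variational characterization of $Q_\omega$ as a ground state and is part of the Frank--Lenzmann(--Silvestre) analysis \cite{FL2013Acta,FLS2016CPAM}, which you should invoke for that point just as you do for $\ker L_\omega^+=\mathrm{span}\{\nabla Q_\omega\}$ (and, in dimension $N>1$, the Lagrange multiplier in front of $Q_\omega'$ becomes a vector paired against each component of $\nabla Q_\omega$, though the elimination argument is unchanged).
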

\begin{proof}
The proof of this result was given by \cite{KLR2013ARMA} for mass critical in one dimensional case. For higher dimensional, one can see our previous results \cite{GL2021JFA,GL2022CPDE}. For general case, exactly the same arguments apply to prove this Lemma.
\end{proof}

We also recall some variational properties of $Q_{\omega_0}$. Define
\begin{align}\label{def:functional}
    G_{\omega}(u)=E(u)+\frac{\omega}{2}\int|u|^2.
\end{align}
\begin{lemma}\label{lemma:energy:realtion}
For $\omega,\omega_0>0$. Let $Q_{\omega_0}$ be the ground state of equation \eqref{equ-elliptic} with $\omega=\omega_0$ and $Q_\omega$ be defined as \eqref{equ-elliptic}. For $\eta=\eta_1+i\eta_2\in H^{\frac{1}{2}}(\mathbb{R})$ is small, we have
    \begin{align*}
        G_{\omega_0}(Q_{\omega_0}+\eta)=G_{\omega_0}(Q_{\omega_0})+(L_{\omega_0}^+\eta_1,\eta_1)+(L_{\omega_0}^-\eta_2,\eta_2)+\|\eta\|_{H^\frac{1}{2}}^2\beta\left(\|\eta\|_{H^\frac{1}{2}}\right),
    \end{align*}
with $\beta(\epsilon)\to0$ as $\epsilon\to0$. In particular, for $\omega$ close to $\omega_0$,
\begin{align*}
    G_{\omega_0}(Q_{\omega})=G_{\omega_0}(Q_{\omega_0})+(\omega-\omega_0)^2(L_{\omega_0}^+S_{\omega_0},S_{\omega_0})+|\omega-\omega_0|^2\beta(|\omega-\omega_0|),
\end{align*}
where $S_\omega=\frac{\partial}{\partial\omega}Q_\omega$ and $L_{\omega}^+S_\omega=-Q_\omega$.
Moreover,
\begin{align*}
    |G_{\omega_0}(Q_{\omega_0})-G_{\omega_0}(Q_{\omega})|\leq C|\omega-\omega_0|^2.
\end{align*}
\end{lemma}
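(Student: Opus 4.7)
The plan is to Taylor expand the functional $G_{\omega_0}$ around the critical point $Q_{\omega_0}$ up to second order, and then specialize the resulting expansion to the curve $\omega\mapsto Q_\omega$. The first observation is that $Q_{\omega_0}$ is a critical point of $G_{\omega_0}$, since its Euler--Lagrange equation is exactly \eqref{equ-elliptic}: $G_{\omega_0}'(Q_{\omega_0})=DQ_{\omega_0}+\omega_0 Q_{\omega_0}-Q_{\omega_0}^p=0$. Hence the linear term in $\eta$ vanishes, and only the quadratic term and the remainder need to be tracked.

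Next I would compute the second variation explicitly. Writing $\eta=\eta_1+i\eta_2$ with $\eta_1,\eta_2$ real and using the key identity $|Q_{\omega_0}+\eta|^2=(Q_{\omega_0}+\eta_1)^2+\eta_2^2$, one expands $|Q_{\omega_0}+\eta|^{p+1}$ to second order in $(\eta_1,\eta_2)$ and checks by a direct calculation that the real and imaginary parts decouple: the Hessian in the $\eta_1$-direction is $p(p+1)Q_{\omega_0}^{p-1}$ (producing the factor $pQ_{\omega_0}^{p-1}$ inside $L_{\omega_0}^+$), while the Hessian in the $\eta_2$-direction is $(p+1)Q_{\omega_0}^{p-1}$ (producing $Q_{\omega_0}^{p-1}$ inside $L_{\omega_0}^-$). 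Combined with the quadratic parts of $\frac12\|D^{1/2}u\|_{L^2}^2$ and $\frac{\omega_0}{2}\|u\|_{L^2}^2$, this reproduces the claimed expression $(L_{\omega_0}^+\eta_1,\eta_1)+(L_{\omega_0}^-\eta_2,\eta_2)$ (the conventional factor of $\tfrac12$ is absorbed into the normalization used in the statement). The remainder is a single integrand controlled by a pointwise Taylor-type bound for $|Q_{\omega_0}+\eta|^{p+1}$ minus its second-order approximation, which together with the decay \eqref{decay} of $Q_{\omega_0}$ and the one-dimensional Sobolev embedding $H^{1/2}(\mathbb{R})\hookrightarrow L^q(\mathbb{R})$ (valid for every $q\in[2,\infty)$) yields an $o(\|\eta\|_{H^{1/2}}^2)$ bound and hence the factor $\beta(\|\eta\|_{H^{1/2}})$.

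For the second expansion I would differentiate the profile equation \eqref{equ-elliptic} in $\omega$ to obtain $L_\omega^+S_\omega=-Q_\omega$ with $S_\omega=\partial_\omega Q_\omega$, which in particular shows that $\omega\mapsto Q_\omega$ is of class $C^2$ with values in $H^{1/2}$, so $Q_\omega=Q_{\omega_0}+(\omega-\omega_0)S_{\omega_0}+O(|\omega-\omega_0|^2)$ in $H^{1/2}$. Since $Q_\omega$ is real-valued, I can apply the first expansion with $\eta_1=Q_\omega-Q_{\omega_0}$ and $\eta_2=0$; the leading quadratic contribution is then $(\omega-\omega_0)^2(L_{\omega_0}^+S_{\omega_0},S_{\omega_0})$, while the cross terms between $(\omega-\omega_0)S_{\omega_0}$ and the $O(|\omega-\omega_0|^2)$ correction are of order $|\omega-\omega_0|^3$ and are swallowed by the error term $|\omega-\omega_0|^2\beta(|\omega-\omega_0|)$. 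The final bound $|G_{\omega_0}(Q_{\omega_0})-G_{\omega_0}(Q_\omega)|\leq C|\omega-\omega_0|^2$ is then immediate from boundedness of $(L_{\omega_0}^+S_{\omega_0},S_{\omega_0})$.

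The main technical obstacle I anticipate is the pointwise remainder estimate in the regime $1<p<2$, where the nonlinearity $|u|^{p+1}$ has only H\"older-type second derivatives, so one cannot simply write a clean cubic remainder and must instead split the bound into two regions: $\lesssim Q_{\omega_0}^{p-2}|\eta|^3$ where $Q_{\omega_0}$ dominates and $\lesssim|\eta|^{p+1}$ where $\eta$ dominates, and then recombine them using the decay of $Q_{\omega_0}$ and Sobolev embedding. All the other ingredients---the smoothness of $\omega\mapsto Q_\omega$ via the identity $L_\omega^+S_\omega=-Q_\omega$ together with Lemma~\ref{lemma:linear:operator}, and the passage from the first expansion to the second---are standard perturbation-theoretic computations once this bound is in hand.
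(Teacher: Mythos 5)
Your proposal is correct and follows exactly the standard Weinstein-type second-order expansion of $G_{\omega_0}$ around its critical point $Q_{\omega_0}$ that the paper itself invokes: the paper omits the proof entirely and refers to Weinstein's 1986 paper and to Lemma~2.3 of Martel--Merle--Tsai, and your argument (vanishing of the first variation, decoupling of the Hessian into $L_{\omega_0}^{+}$ and $L_{\omega_0}^{-}$, the H\"older-remainder splitting for $1<p<2$, and the specialization to $\eta_1=Q_\omega-Q_{\omega_0}$ via $L_\omega^{+}S_\omega=-Q_\omega$) is precisely what those references carry out. The one discrepancy you rightly flag is the factor of $\tfrac12$ that a genuine Taylor expansion places in front of $(L_{\omega_0}^{+}\eta_1,\eta_1)+(L_{\omega_0}^{-}\eta_2,\eta_2)$ but which the paper's statement drops; this is a harmless normalization issue with no effect on the coercivity arguments downstream.
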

\begin{proof}[\bf Proof of Lemma \ref{lemma:energy:realtion}.]
By a similar argument as the classical nonlinear Schr\"odinger equation (see \cite[Section 2]{W1986CPAM} or \cite[Lemma 2.3]{MMT2006Duke}), one can obtain this result. Here we omit it.

\end{proof}

\subsection{Decomposition of a solution}
Let $\omega_1^0,\omega_2^0\in\mathbb{R}$. For $\alpha>0$, $\gamma_0\in\mathbb{R}$ and $\sigma_0$ are given as in Theorem \ref{Thm1}, we consider the $H^\frac{1}{2}$-neighborhood of size $\alpha$ of the sum of two solitary waves with parameter $\{\omega_k^0\}$, located at distances satisfies the assumption \eqref{def:sig0} and
\begin{align*}
    B(\alpha,\sigma_0)=\left\{u\in H^s,~\inf\left\|u(\cdot)-\sum_{k=1}^2Q_{\omega_k^0}(\cdot-y_k)e^{i\gamma_0}\right\|_{H^\frac{1}{2}}<\alpha\right\}.
\end{align*}

Now we give the following Lemma.
\begin{lemma}\label{lemma:decom:unique}
There exist $\alpha_1, \sigma_1, C_1>0$, and for any $k\in\{1,2\}$, there exist unique $C^1$-functions $(\omega_k,x_k,\gamma_k):B(\alpha_1,\sigma_1)\to(0,\infty)\times\mathbb{R}\times\mathbb{R}$ such that if $u\in B(\alpha_1,\sigma_1)$ and if one defines
\begin{align*}
    \epsilon(x)=u(x)-\sum_{k=1}^2Q_{\omega_k}(\cdot-x_k)e^{i\gamma_k},
\end{align*}
then for all $k=1,2$,
\begin{align*}
    \Re\int Q_{\omega_k}(\cdot-x_k)e^{i\gamma_k}\bar{\epsilon}(x)dx=\Im\int Q_{\omega_k}(\cdot-x_k)e^{i\gamma_k}\bar{\epsilon}(x)dx=\Re\int \nabla Q_{\omega_k}(\cdot-x_k)e^{i\gamma_k}\bar{\epsilon}(x)dx=0.
\end{align*}
Moreover, if $u\in B(\alpha,\sigma_1)$ for $0<\alpha<\alpha_1$ and $\sigma>\sigma_1$,  then
\begin{align}\label{decom:3}
    \|\epsilon\|_{H^\frac{1}{2}}+\sum_{k=1}^2|\omega_k-\omega_k^0|\leq C_1\alpha,\\\label{decom:trans}
    \min\{|x_1-x_2|\}\geq \sigma.
\end{align}
\end{lemma}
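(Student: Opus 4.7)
The plan is a standard modulation argument via the implicit function theorem. For $u\in B(\alpha,\sigma_0)$ and $\vec{p}=(\omega_1,x_1,\gamma_1,\omega_2,x_2,\gamma_2)$, define
\[\epsilon(\cdot\,;u,\vec{p})=u(\cdot)-\sum_{k=1}^{2}Q_{\omega_k}(\cdot-x_k)e^{i\gamma_k},\]
and let $\Phi(u,\vec{p})\in\mathbb{R}^{6}$ collect the six real scalar products that are required to vanish (three for each $k\in\{1,2\}$). At the reference configuration $u^{*}=\sum_{k}Q_{\omega_k^0}(\cdot-x_k^0)e^{i\gamma_k^0}$, $\vec{p}^{*}=(\omega_k^0,x_k^0,\gamma_k^0)_{k=1,2}$, one has $\epsilon=0$, hence $\Phi(u^{*},\vec{p}^{*})=0$. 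The goal is to solve $\Phi(u,\vec{p})=0$ for $\vec{p}=\vec{p}(u)$ by IFT at this point.

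The crucial step is to verify that $D_{\vec{p}}\Phi(u^{*},\vec{p}^{*})$ is invertible. Since $\epsilon$ vanishes at the reference, each partial derivative reduces to a pairing of one of the ``basis'' functions $Q_{\omega_k^0}(\cdot-x_k^0)e^{i\gamma_k^0}$ or $\nabla Q_{\omega_k^0}(\cdot-x_k^0)e^{i\gamma_k^0}$ with one of the parameter derivatives $S_{\omega_{k'}^0}(\cdot-x_{k'}^0)e^{i\gamma_{k'}^0}$, $-\nabla Q_{\omega_{k'}^0}(\cdot-x_{k'}^0)e^{i\gamma_{k'}^0}$, or $iQ_{\omega_{k'}^0}(\cdot-x_{k'}^0)e^{i\gamma_{k'}^0}$. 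For the $3\times 3$ block on the diagonal corresponding to $k=k'$, a short computation using the parity of $Q_{\omega_k^0}$ (even) and $\nabla Q_{\omega_k^0}$ (odd) yields three surviving entries, namely $(S_{\omega_k^0},Q_{\omega_k^0})=\tfrac{1}{2}\tfrac{d}{d\omega}\|Q_\omega\|_{L^2}^2|_{\omega_k^0}$, $\|\nabla Q_{\omega_k^0}\|_{L^2}^2$ and $\|Q_{\omega_k^0}\|_{L^2}^2$, arranged in a permutation pattern whose determinant is the nonzero product of these quantities (positive by \eqref{condition}). For the off-diagonal $3\times 3$ block ($k\neq k'$), every entry is of the form $\int F(\cdot-x_1^0)\,G(\cdot-x_2^0)\,dx$ with $F,G\in\{Q_{\omega_k^0},\nabla Q_{\omega_k^0},S_{\omega_k^0}\}$ and $|x_1^0-x_2^0|\geq\sigma$; by the decay estimate \eqref{decay} (and the analogous decay of $S_{\omega_k^0}$ inherited from $L_{\omega_k^0}^+S_{\omega_k^0}=-Q_{\omega_k^0}$), these entries are $O(\sigma^{-(N+1)})$. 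Hence, for $\sigma_1$ large enough, $D_{\vec{p}}\Phi(u^{*},\vec{p}^{*})$ is a small perturbation of an invertible block-diagonal matrix and is uniformly invertible on a neighborhood.

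The implicit function theorem then yields, for $\alpha_1$ small and $\sigma_1$ large, unique $C^1$ functions $\vec{p}(u)$ on $B(\alpha_1,\sigma_1)$ with $\Phi(u,\vec{p}(u))=0$, proving existence and uniqueness of $(\omega_k,x_k,\gamma_k)$. The quantitative IFT bound gives $|\vec{p}(u)-\vec{p}^{*}|\lesssim \|u-u^{*}\|_{H^{\frac{1}{2}}}\lesssim\alpha$, which yields the first half of \eqref{decom:3}; the triangle inequality
\[\|\epsilon\|_{H^{\frac{1}{2}}}\leq \|u-u^{*}\|_{H^{\frac{1}{2}}}+\sum_{k=1}^{2}\bigl\|Q_{\omega_k}(\cdot-x_k)e^{i\gamma_k}-Q_{\omega_k^0}(\cdot-x_k^0)e^{i\gamma_k^0}\bigr\|_{H^{\frac{1}{2}}}\lesssim \alpha\]
then completes \eqref{decom:3}. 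Finally, \eqref{decom:trans} follows from $|x_k-x_k^{0}|\lesssim\alpha$ combined with $|x_1^0-x_2^0|\geq\sigma$, after taking $\sigma_1$ large enough that the $O(\alpha)$ correction is absorbed.

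The only substantive obstacle is the interaction between the two solitons, but it only enters through integrals $\int F(\cdot-x_1^0)G(\cdot-x_2^0)dx$ with $F,G$ of polynomial decay, which behave like $\sigma^{-(N+1)}$. This is enough to keep the off-diagonal blocks of the Jacobian small so that IFT applies. The nonlocal operator $D$ does not appear in this lemma at all; the genuinely nonlocal issues are postponed to the mass monotonicity and coercivity arguments of later sections.
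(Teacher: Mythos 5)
Your proposal is correct and follows essentially the same route as the paper: both set up the six orthogonality functionals, show the Jacobian at the reference configuration is block-diagonal up to $O(\sigma^{-2})$ off-diagonal interaction terms (using parity of $Q_{\omega_k^0}$ and the assumption \eqref{condition} for the diagonal blocks, and the decay \eqref{decay} for the cross terms), and conclude by the implicit function theorem with the quantitative bounds \eqref{decom:3}--\eqref{decom:trans}. No substantive differences.
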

\begin{proof}
The proof is a standard application of the Implicit function theorem. For the reader's convenience, we give the proof of it.  Let $\alpha,\sigma>0$ and $x_1^0,,x_2^0$ be such that \eqref{def:sig0} holds and $\gamma_1^0,\gamma_2^0\in\mathbb{R}$. Denote by $B_0$ the $H^\frac{1}{2}$-ball of the center $\sum_{k=1}^2Q_{\omega_k^0}(\cdot-x_k^0)e^{i\gamma_k^0}$ and of radius $10\alpha$. For any $u\in B_0$ and the parameters $\omega_1,\omega_2$, $x_1,x_2$, $\gamma_1,\gamma_2$, Let $\rho=(\omega_1,\omega_2, x_1,x_2, \gamma_1,\gamma_2,u)$, and define
\begin{align*}
    \epsilon_{\rho}(x)=u(x)-\sum_{k=1}^2Q_{\omega_k}(\cdot-x_k)e^{i\gamma_k}.
\end{align*}
Now we define the functions of $\rho$, for $k=1,2$,
\begin{align*}
    \eta_k^1(\rho)=&\Re\int Q_{\omega_k}(x-x_k)e^{i\gamma_k}\bar{\epsilon}_{\rho}(x)dx,\\
    \eta_k^2(\rho)=&\Re\int Q^{\prime}_{\omega_k}(x-x_k)e^{i\gamma_k}\bar{\epsilon}_{\rho}(x)dx,\\
    \eta_k^3(\rho)=&\Im\int Q_{\omega_k}(x-x_k)e^{i\gamma_k}\bar{\epsilon}_{\rho}(x)dx,
\end{align*}
for $\rho$ close to
\begin{align*}
    \rho_0=\left(\omega_1^0,\omega_2^0, x_1^0,x_2^0, \gamma_1^0,\gamma_2^0,\sum_{k=1}^2Q_{\omega_k^0}(\cdot-x_k^0)e^{i\gamma_k^0}\right).
\end{align*}
For $\rho=\rho_0$, we have $\epsilon_{\rho_0}(x)\equiv0$, and thus $\eta^j_{k}(\rho_0)=0$, for $j=1,2,3$ and $k=1,2$. We check by applying the implicit function theorem that for any $u\in B_0$, one can choose in a unique way the coefficients $(\omega_1,\omega_2, x_1,x_2, \gamma_1,\gamma_2)$, so that $\rho$ is close to $\rho_0$ and verifies $\eta_k^j(\rho)=0$ for $j=1,2,3$. We compute the derivatives of $\eta_k^j$ for any $k$ and $j$ with respect to each $(\omega_k,x_k,\gamma_k)$. Note that
\begin{align*}
    \frac{\partial\epsilon_\rho}{\partial\omega_k}(\rho_0)=&-\frac{\partial Q_{\omega}}{\partial\omega}(\cdot-x_k^0)e^{i\gamma_k^0}\Big|_{\omega=\omega_k^0},\\
    \frac{\partial\epsilon_\rho}{\partial x_k}(\rho_0)=&Q^{\prime}_{\omega_k^0}(\cdot-x_k^0)e^{i\gamma_k^0},\\
    \frac{\partial\epsilon_\rho}{\partial\gamma_k}(\rho_0)=&-iQ_{\omega_k^0}(\cdot-x_k^0)e^{i\gamma_k^0}.
\end{align*}
Thus
\begin{align*}
    \frac{\partial\eta_{k'}^1}{\partial \omega_k}(\rho_0)=&-\Re\int Q_{\omega_{k'}^0}(x-x_{k'}^0)e^{i\gamma_{k'}^0}\frac{\partial Q_{\omega}}{\partial\omega}(\cdot-x_k^0)\Big|_{\omega=\omega_k^0}e^{-i\gamma_k^0}dx,\\
     \frac{\partial\eta_{k'}^1}{\partial x_k}(\rho_0)=&\Re\int  Q_{\omega_{k'}^0}(x-x_{k'}^0)e^{i\gamma_{k'}^0}Q^{\prime}_{\omega_k^0}(\cdot-x_k^0)e^{-i\gamma_k^0}dx,\\
      \frac{\partial\eta_{k'}^1}{\partial \gamma_k}(\rho_0)=&-\Im\int  Q_{\omega_{k'}^0}(x-x_{k'}^0)e^{i\gamma_{k'}^0}Q_{\omega_k^0}(\cdot-x_k^0)e^{-i\gamma_k^0}dx,
\end{align*}
and similar formulas hold for $\frac{\partial\eta_{k'}^j}{\partial \omega_k}(\rho_0)$, $\frac{\partial\eta_{k'}^j}{\partial x_k}(\rho_0)$, $\frac{\partial\eta_{k'}^j}{\partial \gamma_k}(\rho_0)$, for $j=2,3$.

Now we finish the computations for $k'=k$. The assumption \eqref{condition} implies that $\frac{\partial\eta_{k}^1}{\partial \omega_k}(\rho_0)<0$; Since $Q_{\omega_k}$ is even, $\frac{\partial\eta_{k}^1}{\partial x_k}(\rho_0)=0$; and finally, since $Q_{\omega_k}$ is real, $\frac{\partial\eta_{k}^1}{\partial \gamma_k}(\rho_0)=0$. The same applies for $\eta_k^j$, $j=2,3$. Hence, we obtain the following:
\begin{align}\label{D:2}
    \frac{\partial\eta_{k}^1}{\partial \omega_k}(\rho_0)<0,~~\frac{\partial\eta_{k}^1}{\partial x_k}(\rho_0)=0,~~\frac{\partial\eta_{k}^1}{\partial \gamma_k}(\rho_0)=0,\notag\\
    \frac{\partial\eta_{k}^2}{\partial \omega_k}(\rho_0)=0,~~\frac{\partial\eta_{k}^2}{\partial x_k}(\rho_0)>0,~~\frac{\partial\eta_{k}^2}{\partial \gamma_k}(\rho_0)=0,\notag\\
    \frac{\partial\eta_{k}^3}{\partial \omega_k}(\rho_0)=0,~~\frac{\partial\eta_{k}^3}{\partial x_k}(\rho_0)=0,~~\frac{\partial\eta_{k}^3}{\partial \gamma_k}(\rho_0)>0.
\end{align}
For $k'\neq k$, and $j=1,2,3$, since the different $Q_{\omega_k}$ are algebraic decaying and located at centers distant at least of $\sigma$, we have
\begin{align}\label{D:12}
    \left|\frac{\partial\eta_{k'}^j}{\partial\omega_k}(\rho_0)\right|+\left|\frac{\partial\eta_{k'}^j}{\partial\omega_k}(\rho_0)\right|+\left|\frac{\partial\eta_{k'}^j}{\partial\omega_k}(\rho_0)\right|\leq\langle \sigma\rangle^{-2}.
\end{align}
Theses terms are arbitrarily small by choosing $\sigma$ large.

Therefore, by \eqref{D:2} and \eqref{D:12}, the Jacobian of $\eta=(\eta_1^1,\eta_2^1,\eta_1^2,\eta_2^2,\eta_1^3,\eta_2^3)$ as a function of $\rho$ at the point $\rho_0$ is not zero. Thus we can apply the implicit function theorem to prove that for $\alpha$ small and $u\in B_0$, the existence and uniqueness of parameters $\rho$ such that $\eta(\rho)=0$. We obtain an estimate \eqref{decom:3} with constants that are independent of the ball $B_0$. This proves the result for $u\in B_0$. If we now take $u\in B(\alpha,\sigma_0)$, then $u$ belongs to such a ball $B_0$, and the result follows.
\end{proof}

As a consequence of this decomposition, we have the following result.
\begin{corollary}\label{corollary:1}
There exist $\alpha_1,\sigma_1,C_1>0$ such that the following is true. If for  $\sigma>\sigma_1$, $0<\alpha<\alpha_1$ and $t_0>0$,
\begin{align}\label{decom:10}
    u(t)\in B(\alpha,\sigma)~~\text{for all}~~t\in[0,t_0],
\end{align}
then there exist unique $C^1$-function $\omega_k:[0,t_0]\to(0,+\infty)$, $x_k,\gamma_k:[0,t_0]\to\mathbb{R}$ such that if we set
\begin{align*}
    \epsilon(t,x)=u(t,x)-R(t,x),
\end{align*}
where
\begin{align*}
    R(t,x)=\sum_{k=1}^2R_k(t,x),~~R_k(t,x)=Q_{\omega_k(t)}(x-x_k(t))e^{i\gamma_k(t)},
\end{align*}
then $\epsilon(t)$ satisfies, for all $k=1,2$ and all $t\in [0,t^*]$,
\begin{align}\label{decom:12}
    \Re\int R_k(t)\bar{\epsilon}(t)=\Im\int R_k(t)\bar{\epsilon}(t)=\Re\int \partial_xR_k(t)\bar{\epsilon}(t)=0.
\end{align}
Moreover, for all $t\in [0,t^*]$ and for all $k=1,2$,
\begin{align}\label{decom:13}
    &\|\epsilon(t)\|_{H^\frac{1}{2}}+\sum_{k=1}^2|\omega(t)-\omega_k^0|\leq C_1\alpha,\\\label{decom:14}
    &\min\{|x_1-x_2|\}\geq \sigma\\ \label{decom:15}
    &|\omega^{\prime}_k(t)|+|x_k^{\prime}(t)|^2+|\gamma_k^{\prime}(t)-\omega_k(t)|^2\leq C\int \langle x-x_k\rangle^{-2}\epsilon^2(t,x)dx+C_1\langle \sigma\rangle^{-2}.
\end{align}
\end{corollary}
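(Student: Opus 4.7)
The plan is to apply Lemma \ref{lemma:decom:unique} pointwise in $t$. Hypothesis \eqref{decom:10} guarantees that $u(t) \in B(\alpha,\sigma_1)$ for every $t \in [0,t_0]$, so for each such $t$ the lemma produces unique parameters $\omega_k(t), x_k(t), \gamma_k(t)$ satisfying \eqref{decom:12}. To upgrade to $C^1$ regularity in $t$, I will invoke the implicit function theorem applied to the same map $\eta = (\eta_k^j)$ used in the proof of Lemma \ref{lemma:decom:unique}: at each $t$ the Jacobian with respect to $(\omega_k,x_k,\gamma_k)_{k=1,2}$ is nondegenerate (this was established via \eqref{D:2}--\eqref{D:12}), and $t \mapsto u(t)$ is $C^1$ from $[0,t_0]$ into $H^{\frac{1}{2}}$ by the local well-posedness theory recalled in the appendix. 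Composing these yields the $C^1$ regularity of the modulation parameters. The bounds \eqref{decom:13} and \eqref{decom:14} are direct transcriptions of \eqref{decom:3} and \eqref{decom:trans}.

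It remains to establish the dynamical estimate \eqref{decom:15}. The strategy is to differentiate each of the three orthogonality conditions in \eqref{decom:12} with respect to $t$, substitute $i\partial_t u = Du - |u|^{p-1}u$ from \eqref{equ-hf}, and express $\partial_t R_k$ in terms of the modulation speeds via
\[
\partial_t R_k = \omega_k'(t)\,\partial_\omega Q_{\omega_k}(\cdot-x_k)\,e^{i\gamma_k} - x_k'(t)\,\partial_x Q_{\omega_k}(\cdot-x_k)\,e^{i\gamma_k} + i\gamma_k'(t)\,R_k.
\]
After using the equation \eqref{equ-elliptic} for $Q_{\omega_k}$ to cancel the linear terms (replacing $DQ_{\omega_k}$ by $Q_{\omega_k}^p - \omega_k Q_{\omega_k}$), one obtains a $6\times 6$ linear system whose unknowns are the six scalars $\omega_1'(t),\omega_2'(t),x_1'(t),x_2'(t),\gamma_1'(t)-\omega_1(t),\gamma_2'(t)-\omega_2(t)$. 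The diagonal $3\times 3$ blocks are, up to negligible corrections, the same as the diagonal block of the Jacobian in \eqref{D:2}, hence invertible; the off-diagonal $3\times 3$ blocks involve scalar products between an $R_k$-based profile at $x_k$ and an $R_{k'}$-based profile at $x_{k'}$ with $|x_1-x_2|\ge\sigma$, so by the decay estimate \eqref{decay} they contribute $O(\langle\sigma\rangle^{-2})$. Inverting this almost-diagonal system gives the bound \eqref{decom:15}, provided the source terms are controlled appropriately.

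The source terms split into two pieces. The first consists of integrals of the form $\int \epsilon(t) \cdot F_k(t)$, where $F_k$ is localized near $x_k$ at scale $\langle x-x_k\rangle^{-2}$ (coming from $R_k$, $\partial_x R_k$, $|R_k|^{p-1}R_k$ and so on); these produce by Cauchy--Schwarz contributions bounded by $\big(\int \langle x-x_k\rangle^{-2}\epsilon^2(t,x)\,dx\big)^{1/2}$, which after squaring gives the first term on the right-hand side of \eqref{decom:15}. The second piece consists of interaction terms between $R_1$ and $R_2$, arising both from the nonlinearity $|R|^{p-1}R - \sum_k |R_k|^{p-1}R_k$ and from the action of $D$, each of which is controlled by $\langle\sigma\rangle^{-2}$ using \eqref{decay}, giving the second term on the right of \eqref{decom:15}.

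The main obstacle is precisely this interaction bookkeeping when the nonlocal operator $D$ acts: unlike the pure NLS case, $D R_k$ does not remain essentially localized at $x_k$, so scalar products such as $\int R_1\,D R_2$ or $\int R_1\,|R_2|^{p-1}R_2$ are not exponentially small in $\sigma$. To handle them I will use the integral representation formula of $D$ from Lemma \ref{lem-formula} (with $s=\frac{1}{2}$), the commutator identity of Lemma \ref{lemma:commu:1}, and the pointwise decay \eqref{decay} for $Q_{\omega_k}$; splitting the integration domain into $|y|\le\frac{1}{2}|x_1-x_2|$ and its complement then shows that each nonlocal cross term is still $O(\langle\sigma\rangle^{-2})$, which is exactly the tolerance in \eqref{decom:15}. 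Once these two types of source bounds are in place, inverting the almost-diagonal linear system yields \eqref{decom:15} and completes the proof.
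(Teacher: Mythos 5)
Your route is the same as the paper's: apply Lemma \ref{lemma:decom:unique} at each fixed time, upgrade the modulation parameters to $C^1$, read off \eqref{decom:13}--\eqref{decom:14} from \eqref{decom:3} and \eqref{decom:trans}, and obtain \eqref{decom:15} by differentiating the orthogonality relations \eqref{decom:12} and pairing the resulting equation for $\epsilon$ with $Q_{\omega_k}$, $Q_{\omega_k}'$, $iQ_{\omega_k}$; your extra bookkeeping for the nonlocal cross terms via the integral representation of $D$ is consistent with how the paper treats such terms elsewhere. However, there is a genuine gap in your derivation of \eqref{decom:15}. Your uniform scheme --- bound every linear-in-$\epsilon$ source by Cauchy--Schwarz by $\bigl(\int\langle x-x_k\rangle^{-2}\epsilon^2\bigr)^{1/2}$, invert the almost-diagonal system, then square --- only delivers the claimed bounds for the quantities that appear \emph{squared}, namely $|x_k'|^2$ and $|\gamma_k'-\omega_k|^2$. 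For $|\omega_k'|$, which enters \eqref{decom:15} to the first power, it yields only $|\omega_k'|\le C\bigl(\int\langle x-x_k\rangle^{-2}\epsilon^2\bigr)^{1/2}+C\langle\sigma\rangle^{-2}$, which is strictly weaker than $|\omega_k'|\le C\int\langle x-x_k\rangle^{-2}\epsilon^2+C\langle\sigma\rangle^{-2}$ precisely in the relevant regime where $\epsilon$ is small. The quadratic control of $\omega_k'$ requires a cancellation specific to that component: differentiating $\Re\int R_k\bar\epsilon=0$ and substituting \eqref{equ-hf}, the linear-in-$\epsilon$ source in the $\omega_k'$ row pairs the imaginary part of $e^{-i\gamma_k}\epsilon$ against $DQ_{\omega_k}+\omega_kQ_{\omega_k}-Q_{\omega_k}^{p}$, which vanishes identically by \eqref{equ-elliptic} (equivalently $L^-_{\omega_k}Q_{\omega_k}=0$), so that only genuinely quadratic remainders and the $O(\langle\sigma\rangle^{-2})$ interactions survive. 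Without invoking this cancellation your argument does not prove \eqref{decom:15} as stated.

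A secondary, fixable issue: Lemma \ref{Lemma:local} gives only $u\in C^0([0,T);H^s)$, hence $\partial_t u\in H^{s-1}$, so $t\mapsto u(t)$ is \emph{not} $C^1$ into $H^{1/2}$ as you assert when applying the implicit function theorem. The standard remedy (and the one the paper invokes) is a regularization argument, or the observation that the functionals $\eta_k^j$ pair $u$ against smooth, decaying profiles, so that their time derivatives make sense through the weak formulation of \eqref{equ-hf}. As written, your justification of the $C^1$ regularity of $(\omega_k,x_k,\gamma_k)$ is unsupported.
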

\begin{proof}
Assume that $u(t)$ satisfies \eqref{decom:10} on $[0,t_0]$. Then applying Lemma \ref{lemma:decom:unique} to $u(t)$ for any $t\in [0,T_0]$, since the map $t\mapsto u(t)$ is continuous in $H^{s}$, $s\in(\frac{1}{2},1)$, we obtain for any $k=1,2$ the existence of continuous functions $\omega_k:[0,t_0]\to(0,\infty)$, such that \eqref{decom:12} holds. Moreover, \eqref{decom:13} and \eqref{decom:14} are the consequence of \eqref{decom:3}  and \eqref{decom:trans}, respectively. To prove that the functions $\omega_k$, $x_k$ and $\gamma_k$ are in fact of class $C^1$, we use a regularization argument and computations based on the equation of $\epsilon(t)$. These computations also justify estimates \eqref{decom:15}. The similar argument can be found in \cite[Corollary 3]{MMT2006Duke} or \cite{MM2001GAFA}. Now we give the equation of $\epsilon(t)$ to justify formally estimate \eqref{decom:15}. It is straightforward to check that the equation of $\epsilon(t)$ is
\begin{align}\label{equ:eps}
    i\partial_t\epsilon+\mathcal{L}_K\epsilon=&-i\sum_{k=1}^2\omega^{\prime}_k(t)\frac{\partial Q_\omega}{\partial \omega}|_{\omega_k(t)}(x-x_k(t))e^{i\gamma_k(t)}\notag\\
     &+i\sum_{k=1}^2x^{\prime}_k(t)Q^{\prime}_{\omega_k(t)}(x-x_k(t))e^{i\gamma_k(t)}\notag\\
    &+\sum_{k=1}^2(\gamma^{\prime}_k(t)-\omega_k(t))Q{\omega_k(t)}(x-x_k(t))e^{i\gamma_k(t)}\notag\\
    &+\mathcal{O}(\|\epsilon\|_{H^1}^2)+\mathcal{O}(\sigma^{-2}),
\end{align}
where
\begin{align*}
    \mathcal{L}_K\epsilon=D\epsilon-\sum_{k=1}^2\left(|Q_{\omega_k(t)}|^{p-1}\epsilon+(p-1)|Q_{\omega_k(t)}|^{p-2}\Re(Q_{\omega_k(t)}(x-x_k(t))e^{i\gamma_k(t)}\epsilon)\right).
\end{align*}
From the equation \eqref{equ:eps} of $\epsilon$, it is straightforward by taking scalar products by $Q_{\omega_k(t)}$ and then by $Q^{\prime}_{\omega_k(t)}$, we can obtain the estimate of  $|\omega_k^{\prime}(t)|^2$, $|x_k^{\prime}(t)|^2$, $|\gamma^{\prime}_k(t)-\omega_k(t)|^2$.
\end{proof}


\section{Weak stability of a single solitary wave}\label{sec:s}
 Let $u_0\in H^s(\mathbb{R})$, $s\in(\frac{1}{2},1)$, and for some $x_0\in\mathbb{R}$ and $\gamma_0\in \mathbb{R}$,
 \begin{align*}
     \|u_0-Q_{\omega_0}(x-x_0)e^{i\gamma_0}\|_{H^\frac{1}{2}}\leq\alpha,
 \end{align*}
 for $\alpha>0$ small enough. Let $u(t)$ be the corresponding solution of \eqref{equ-hf} with the initial data $u_0\in H^s$.

{ \bf Decomposition of the solution.} We argue on a time interval $[0,t^*]$, so that for all $t\in[0,t^*]$, $u(t)$ is close to $Q_{\omega(t)}(x-x(t))e^{i\gamma(t)}$ for some $\omega(t)$, $x(t)$ and $\gamma(t)$ in $H^s$. We can modify the parameters $\omega(t),x(t)$ and $\gamma(t)$ such that
\begin{align}\label{def:epsilon}
    \epsilon(t,x)=u(t,x)-R_0(t,x),
\end{align}
where
\begin{align}\label{def:R0}
    R_0(t,x)=Q_{\omega(t)}(x-x(t))e^{i\gamma(t)}
\end{align}
satisfies the orthogonal conditions
\begin{align}\label{ortho:condition}
    \Re(\epsilon(t),R_0(t))=\Im(\epsilon(t),R_0(t))=\Re(\epsilon(t), R_0^{\prime}(t))=0.
\end{align}
This choice of orthogonality conditions are well adapted to the positivity properties on the operators $L_\omega^+$ and $L_\omega^-$ (see Lemma \ref{lemma:linear:operator}), and thus it is suitable to apply an energy method.

Note that, as in Lemma \ref{lemma:decom:unique}, we have
\begin{align}\label{D:3}
    \|\epsilon(0)\|_{H^\frac{1}{2}}+|\omega(0)-\omega_0|\leq C\alpha.
\end{align}
By expanding $u(t)=R_0(t)+\epsilon(t)$ in the definition of $G_\omega$ (see \eqref{def:functional}), we obtain the following formula.
\begin{lemma}\label{lemma:J1}
The following holds:
\begin{align*}
    G_{\omega(0)}(u(t))=G_{\omega(0)}(Q_{\omega(0)})+H_0(\epsilon,\epsilon)+\|\epsilon(t)\|_{H^\frac{1}{2}}^2\beta\left(\|\epsilon(t)\|_{H^\frac{1}{2}}\right)+\mathcal{O}(|\omega(t)-\omega(0)|^2),
\end{align*}
with $\beta(\epsilon)\to0$ as $\epsilon\to0$, where
\begin{align}\label{def:H0}
   H_0(\epsilon,\epsilon)= \frac{1}{2}\int|D^{\frac{1}{2}}\epsilon|^2+\frac{\omega(t)}{2}\int|\epsilon|^2-\int \left(\frac{1}{2}|R_0|^{p-1}|\epsilon|^2+\frac{p-1}{2}|R_0|^{p-3}(\Re(R_0\epsilon))^2\right).
\end{align}
\end{lemma}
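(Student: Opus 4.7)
Starting from the definition $G_{\omega(0)}(u) = \frac{1}{2}\|D^{1/2}u\|_{L^2}^2 + \frac{\omega(0)}{2}\|u\|_{L^2}^2 - \frac{1}{p+1}\int|u|^{p+1}$, I would substitute $u(t) = R_0(t) + \epsilon(t)$ and group the terms according to their order in $\epsilon$. The quadratic kinetic and mass contributions are immediate; for the nonlinearity I would Taylor expand
\[
\frac{1}{p+1}|R_0+\epsilon|^{p+1} = \frac{1}{p+1}|R_0|^{p+1} + |R_0|^{p-1}\Re(R_0\bar\epsilon) + \frac{1}{2}|R_0|^{p-1}|\epsilon|^2 + \frac{p-1}{2}|R_0|^{p-3}(\Re(R_0\bar\epsilon))^2 + r(R_0,\epsilon),
\]
where the remainder $r$ is of order $o(|\epsilon|^2)$ uniformly on bounded sets (since $1<p<3$, the map $z\mapsto|z|^{p+1}$ is $C^2$ away from zero, and the non-smoothness at the origin is controlled by the Sobolev embedding $H^{1/2}(\mathbb{R}) \hookrightarrow L^q$ for any $q<\infty$, so that $\int|r(R_0,\epsilon)|\,dx = \|\epsilon\|_{H^{1/2}}^2\,\beta(\|\epsilon\|_{H^{1/2}})$).

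Next I would handle the linear term in $\epsilon$. Collecting the three contributions gives
\[
\Re\int\bigl(DR_0+\omega(0)R_0-|R_0|^{p-1}R_0\bigr)\bar\epsilon\,dx.
\]
Because $R_0(t,x)=Q_{\omega(t)}(x-x(t))e^{i\gamma(t)}$ with $Q_{\omega(t)}$ solving \eqref{equ-elliptic} at frequency $\omega(t)$, the bracket reduces to $(\omega(0)-\omega(t))R_0$, and the orthogonality condition $\Re(\epsilon(t),R_0(t))=0$ from \eqref{ortho:condition} kills this contribution entirely. This is the key cancellation that makes the expansion quadratic in $\epsilon$.

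The remaining quadratic piece matches $H_0(\epsilon,\epsilon)$ exactly except that the mass coefficient is $\omega(0)$ rather than $\omega(t)$. I would rewrite the discrepancy as $\tfrac{\omega(0)-\omega(t)}{2}\int|\epsilon|^2$ and dispose of it by Young's inequality, $|\omega(0)-\omega(t)|\|\epsilon\|_{L^2}^2 \leq \tfrac{1}{2}|\omega(0)-\omega(t)|^2 + \tfrac{1}{2}\|\epsilon\|_{L^2}^4$, the first part being absorbed in $\mathcal{O}(|\omega(t)-\omega(0)|^2)$ and the second in the $\|\epsilon\|_{H^{1/2}}^2\beta(\|\epsilon\|_{H^{1/2}})$ error. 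Finally, the zeroth-order term $G_{\omega(0)}(R_0) = G_{\omega(0)}(Q_{\omega(t)})$ is handled by Lemma \ref{lemma:energy:realtion} applied at $\omega_0=\omega(0)$ and $\omega=\omega(t)$, which yields $G_{\omega(0)}(Q_{\omega(0)}) + \mathcal{O}(|\omega(t)-\omega(0)|^2)$.

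\textbf{Main obstacle.} The routine calculation is the bookkeeping of the Taylor expansion; the only non-trivial point is to make the $o(|\epsilon|^2)$ bound on the nonlinear remainder quantitative in the $H^{1/2}$ norm when $p$ is non-integer. For this I would use a pointwise estimate of the form $|r(R_0,\epsilon)|\lesssim |R_0|^{p-2}|\epsilon|^3 + |\epsilon|^{p+1}$ (valid where $|R_0|\gtrsim|\epsilon|$, with the natural modification where $|\epsilon|\gtrsim|R_0|$), and then control $\int|\epsilon|^{p+1}$ via $H^{1/2}(\mathbb R)\hookrightarrow L^{p+1}$ (here $1<p<3$, so $p+1<4$ is well within range) and $\int |R_0|^{p-2}|\epsilon|^3$ by Hölder together with the decay \eqref{decay} of $Q_\omega$. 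The rest is simply reshuffling the already-computed quadratic form into the form $H_0$.
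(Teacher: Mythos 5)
Your proposal is correct and follows essentially the same route as the paper: substitute $u=R_0+\epsilon$, Taylor-expand the nonlinearity to second order, cancel the linear term using the elliptic equation for $Q_{\omega(t)}$ together with the orthogonality $\Re(\epsilon,R_0)=0$, absorb the $\tfrac{\omega(0)-\omega(t)}{2}\int|\epsilon|^2$ discrepancy by Young's inequality, and reduce $G_{\omega(0)}(Q_{\omega(t)})$ to $G_{\omega(0)}(Q_{\omega(0)})+\mathcal{O}(|\omega(t)-\omega(0)|^2)$ via Lemma \ref{lemma:energy:realtion}. Your extra care with the nonlinear remainder for non-integer $p$ and the explicit citation of Lemma \ref{lemma:energy:realtion} for the zeroth-order term only fill in details the paper leaves implicit.
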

\begin{proof}
First, we consider the term $\int|D^{\frac{1}{2}}u|^2$:
\begin{align*}
\int|D^{\frac{1}{2}}u|^2=&\int(D(R_0+\epsilon),R_0+\epsilon)\\
=&\int|D^{\frac{1}{2}}R_0|^2+2\Re\int D\bar{R}_0\epsilon+\int|D^{\frac{1}{2}}\epsilon|^2.
\end{align*}
For the nonlinear term, we have
\begin{align*}
\int|u|^{p+1}=\int\left(|R_0+\epsilon|^2\right)^{\frac{p+1}{2}}
=&\int\Big[|R_0|^{p+1}+(p+1)|R_0|^{p-1}\Re(R_0\epsilon)+\frac{p+1}{2}|R_0|^{p-1}|\epsilon|^2\\&+\frac{p^2-1}{2}|R_0|^{p-3}(\Re(R_0\epsilon))^2\Big]+\|\epsilon(t)\|_{H^\frac{1}{2}}^2\beta\left(\|\epsilon(t)\|_{H^\frac{1}{2}}\right).
\end{align*}
For the $L^2$ term, we have
\begin{align*}  \int|u|^2=\int|R_0|^2+2\Re\int\bar{R}_0\epsilon+\int|\epsilon|^2.
\end{align*}
Combining above equalities, we get
\begin{align*}
    G_{\omega(0)}(u(t))=&G_{\omega(0)}(R_0(t))+\Re\int D\bar{R}_0\epsilon+\frac{1}{2}\int|D^{\frac{1}{2}}\epsilon|^2+\frac{\omega(0)}{2}\left(2\Re\int\bar{R}_0\epsilon+\int|\epsilon|^2\right)\\\
    &-\int \left(|R_0|^{p-1}\Re(R_0\epsilon)+\frac{1}{2}|R_0|^{p-1}|\epsilon|^2+\frac{p-1}{2}|R_0|^{p-3}(\Re(R_0\epsilon))^2\right)\\
    &+\|\epsilon(t)\|_{H^\frac{1}{2}}^2\beta\left(\|\epsilon(t)\|_{H^\frac{1}{2}}\right)\\
    =&G_{\omega(0)}(R_0(t))+H_0(\epsilon,\epsilon)+\Re\int D\bar{R}_0\epsilon+\omega(0)\Re\int\bar{R}_0\epsilon-\int |R_0|^{p-1}\Re(R_0\epsilon)\\
    &+\frac{\omega(0)-\omega(t)}{2}\int|\epsilon|^2+\|\epsilon(t)\|_{H^\frac{1}{2}}^2\beta\left(\|\epsilon(t)\|_{H^\frac{1}{2}}\right)\\
    =&G_{\omega(0)}(R_0(t))+H_0(\epsilon,\epsilon)+I+\frac{\omega(0)-\omega(t)}{2}\int|\epsilon|^2+\|\epsilon(t)\|_{H^\frac{1}{2}}^2\beta\left(\|\epsilon(t)\|_{H^\frac{1}{2}}\right),
\end{align*}
where $H_0(\epsilon,\epsilon)$ is defined by \eqref{def:H0} and
\begin{align*}
    I:=\Re\int D\bar{R}_0\epsilon+\omega(0)\Re\int\bar{R}_0\epsilon-\int |R_0|^{p-1}\Re(R_0\epsilon).
\end{align*}
By the definition of $R_0$ (see \eqref{def:R0}) and the equality $DQ_{\omega}+\omega Q_{\omega}= |Q_{\omega}|^{p-1}Q_{\omega}$, we have
\begin{align*}
    I:=&\Re\int \left(D\bar{R}_0\epsilon+\omega(t)\bar{R}_0\epsilon- |R_0|^{p-1}R_0\epsilon\right)+(\omega(0)-\omega(t))\Re\int \bar{R}_0\epsilon\\
    =&\Re\int \left(DQ_{\omega(t)}+\omega(t)Q_{\omega(t)}- |Q_{\omega(t)}|^{p-1}Q_{\omega(t)}\right)e^{-i\gamma
    (t)}\epsilon+(\omega(0)-\omega(t))\Re\int \bar{R}_0\epsilon\\
    =&(\omega(0)-\omega(t))\Re\int \bar{R}_0\epsilon\\
    =&0,
\end{align*}
where in the last step we used the fact that the orthogonality condition \eqref{ortho:condition}.

On the other hand, we have
\begin{align*}
    \frac{|\omega(0)-\omega(t)|}{2}\int|\epsilon|^2\leq\frac{1}{4}|\omega(0)-\omega(t)|^2+\frac{1}{4}\left(\int|\epsilon|^2\right)^2.
\end{align*}
Combining the above estimates, we can obtain the desire result.
\end{proof}

Next, we give the following positivity of the quadratic form $H_0$.
\begin{lemma}\label{lemma:coer:2}
There exists $\lambda_0>0$ such that if $\epsilon(t)\in H^{\frac{1}{2}}(\mathbb{R})$ satisfies
\begin{align*}
    \Re(\epsilon(t),R_0(t))= \Im(\epsilon(t),R_0(t))= \Re(\epsilon(t), R_0^{\prime}(t))=0,
\end{align*}
then
\begin{align*}
    H_0(\epsilon(t),\epsilon(t))\geq\lambda_0\|\epsilon(t)\|_{H^{\frac{1}{2}}}^2,
\end{align*}
where $H_0$ is given by \eqref{def:H0}.
\end{lemma}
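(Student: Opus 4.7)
The plan is to reduce the quadratic form $H_0$ to a diagonal sum of $L^{+}_{\omega(t)}$ and $L^{-}_{\omega(t)}$ acting on the real and imaginary parts of the modulated perturbation, and then quote the coercivity statement in Lemma~\ref{lemma:linear:operator}. First I would absorb the phase and translation by setting
\begin{align*}
\tilde\epsilon(y) := e^{-i\gamma(t)}\epsilon(y+x(t)) = \epsilon_1(y) + i\epsilon_2(y), \qquad \epsilon_1,\epsilon_2 \in \mathbb{R},
\end{align*}
and $\omega := \omega(t)$. Since $D^{1/2}$ commutes with the phase $e^{i\gamma(t)}$ and with translations, and since $|R_0|=Q_\omega(\cdot-x(t))$, a direct change of variables yields
\begin{align*}
\int |D^{1/2}\epsilon|^2 &= \int |D^{1/2}\epsilon_1|^2 + \int |D^{1/2}\epsilon_2|^2,\\
\int|\epsilon|^2 &= \int\epsilon_1^2+\int\epsilon_2^2,\qquad
\int|R_0|^{p-1}|\epsilon|^2 = \int Q_\omega^{p-1}(\epsilon_1^2+\epsilon_2^2),
\end{align*}
and, reading $\Re(R_0\epsilon)$ in \eqref{def:H0} with the convention used in Lemma~\ref{lemma:J1} (so that the first-order expansion of $|u|^{p+1}$ is consistent), one finds $\Re(R_0\epsilon)\big|_{\text{after gauge}} = Q_\omega\,\epsilon_1$, hence $|R_0|^{p-3}(\Re(R_0\epsilon))^2 = Q_\omega^{p-1}\epsilon_1^2$.

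Substituting these identities into \eqref{def:H0} I would collect the $\epsilon_1$- and $\epsilon_2$-contributions and recognize
\begin{align*}
H_0(\epsilon,\epsilon) \;=\; \tfrac{1}{2}\bigl(L^{+}_{\omega}\epsilon_1,\epsilon_1\bigr) \;+\; \tfrac{1}{2}\bigl(L^{-}_{\omega}\epsilon_2,\epsilon_2\bigr),
\end{align*}
using the explicit definitions $L^{+}_\omega v = Dv+\omega v-pQ_\omega^{p-1}v$ and $L^{-}_\omega v = Dv+\omega v - Q_\omega^{p-1}v$. The next step is to translate the orthogonality conditions \eqref{ortho:condition} after the same gauge/translation. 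Writing $R_0\bar\epsilon = Q_\omega(\epsilon_1-i\epsilon_2)$ and $\partial_x R_0\,\bar\epsilon = Q_\omega'(\epsilon_1-i\epsilon_2)$ I obtain
\begin{align*}
(\epsilon_1,Q_\omega)_{L^2} = 0,\qquad (\epsilon_2,Q_\omega)_{L^2} = 0,\qquad (\epsilon_1,Q_\omega')_{L^2} = 0,
\end{align*}
which are exactly the orthogonality conditions required by Lemma~\ref{lemma:linear:operator}(i) for $\epsilon_1$ and by Lemma~\ref{lemma:linear:operator}(ii) for $\epsilon_2$.

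Applying that lemma with $\omega=\omega(t)$ gives positive constants $\lambda_\pm(\omega(t))$ such that $(L^\pm_{\omega(t)}\epsilon_j,\epsilon_j)\geq \lambda_\pm\|\epsilon_j\|_{H^{1/2}}^2$. Since by Corollary~\ref{corollary:1} we have $|\omega(t)-\omega_0|\leq C_1\alpha$ and $Q_\omega$ depends continuously on $\omega$, a standard continuity argument (shrinking $\alpha_1$ if necessary) yields a uniform $\lambda_0 := \tfrac{1}{2}\min(\lambda_+,\lambda_-)>0$, and the isometry of the gauge and translation on $H^{1/2}$ gives $\|\epsilon_1\|_{H^{1/2}}^2+\|\epsilon_2\|_{H^{1/2}}^2 = \|\tilde\epsilon\|_{H^{1/2}}^2 = \|\epsilon\|_{H^{1/2}}^2$. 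Combining yields $H_0(\epsilon,\epsilon)\geq \lambda_0\|\epsilon\|_{H^{1/2}}^2$.

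There is no serious obstacle here: the argument is essentially algebraic once the correct gauge reduction is performed. The only point that deserves care is making sure the cross-term $|R_0|^{p-3}(\Re(R_0\epsilon))^2$ is interpreted consistently with the expansion of $|u|^{p+1}$ used in Lemma~\ref{lemma:J1} (so that it really produces $Q_\omega^{p-1}\epsilon_1^2$ after the gauge), and making sure the coercivity constant can be chosen independent of $t$, which is automatic from the smooth dependence of $Q_\omega$ on $\omega$ in a neighborhood of $\omega_0$.
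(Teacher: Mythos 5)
Your proposal is correct and follows essentially the same route as the paper: gauge away the phase and translation, identify $H_0$ as the diagonal sum of the quadratic forms of $L^{+}_{\omega}$ on $\epsilon_1$ and $L^{-}_{\omega}$ on $\epsilon_2$, check that the orthogonality conditions \eqref{ortho:condition} become exactly those required in Lemma \ref{lemma:linear:operator}, and conclude. Your version is in fact slightly more careful than the paper's (the factor $\tfrac12$ in $H_0=\tfrac12(L^{+}_{\omega}\epsilon_1,\epsilon_1)+\tfrac12(L^{-}_{\omega}\epsilon_2,\epsilon_2)$ and the uniformity of $\lambda_0$ in $t$ are handled explicitly), so nothing further is needed.
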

\begin{proof}
This lemma is a direct consequence of the following claim applied to $Q_{\omega(t)}$ and $\epsilon$.

{\bf Claim:} Let $\omega_0>0$, $x_0\in\mathbb{R}$, $\gamma_0\in\mathbb{R}$  and $Q_{\omega_0}$ is the solution of \eqref{equ-elliptic}. Now we consider the quadratic form
\begin{align*}
    \Tilde{H}_0(\eta,\eta)= &\frac{1}{2}\int|D^{\frac{1}{2}}\eta|^2+\frac{\omega(0)}{2}\int|\eta |^2\\
    &-\int \left(\frac{1}{2}|Q_{\omega_0}(\cdot-x_0)|^{p-1}|\eta|^2+\frac{p-1}{2}|Q_{\omega_0}(\cdot-x_0)|^{p-3}(\Re(Q_{\omega_0}(\cdot-x_0)\eta))^2\right).
\end{align*}
There exists $\lambda_1>0$ such that if $\eta=\eta_1+i\eta_2\in H^{\frac{1}{2}}(\mathbb{R})$ satisfies
\begin{align*}
    \Re\int Q_{\omega_0}(\cdot-x_0)e^{-i\gamma_0}\eta=\Re\int Q_{\omega_0}^{\prime}(\cdot-x_0)e^{-i\gamma_0}\eta=\Im\int Q_{\omega_0}(\cdot-x_0)e^{-i\gamma_0}\eta=0.
\end{align*}
Then
\begin{align*}
    \Tilde{H}_0(\eta,\eta)\geq\lambda_1\|\eta\|_{H^{\frac{1}{2}}}^2.
\end{align*}
Indeed, we have
\begin{align*}
    \Tilde{H}_0(\eta,\eta)=(L_{\omega_0}^+\eta_1,\eta_1)+(L_{\omega_0}^-\eta_2,\eta_2)\geq\lambda_1\|\eta\|_{H^{\frac{1}{2}}}^2,
\end{align*}
where in the last step, we used  the orthogonality conditions and Lemma \ref{lemma:linear:operator}. Then the claim is true and we complete the proof of this lemma.
\end{proof}
    The following lemma aims to control the $H^{\frac{1}{2}}$ norm of $\epsilon(t)$.
\begin{lemma}
Assume that $\epsilon(t)$ is given by \eqref{def:epsilon}. Then we have the following estimate:
\begin{align}\label{D:5}
  \|\epsilon(t)\|_{H^\frac{1}{2}}^2\leq C|\omega(t)-\omega(0)|^2+C\|\epsilon(0)\|_{H^\frac{1}{2}}^2 .
\end{align}
if $\|\epsilon(0)\|_{H^\frac{1}{2}}$ is small enough.
\end{lemma}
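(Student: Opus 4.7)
The plan is to exploit conservation of the modified energy $G_{\omega(0)}(u) = E(u) + \frac{\omega(0)}{2} M(u)$, since $E$ and $M$ are individually conserved along the flow of \eqref{equ-hf}. Evaluating Lemma \ref{lemma:J1} at time $t$ and at time $t=0$ (where $|\omega(0)-\omega(0)|=0$) and subtracting yields
\begin{align*}
H_0(\epsilon(t),\epsilon(t)) &= H_0(\epsilon(0),\epsilon(0)) + \|\epsilon(0)\|_{H^{1/2}}^2\,\beta(\|\epsilon(0)\|_{H^{1/2}}) \\
&\quad - \|\epsilon(t)\|_{H^{1/2}}^2\,\beta(\|\epsilon(t)\|_{H^{1/2}}) + \mathcal{O}(|\omega(t)-\omega(0)|^2),
\end{align*}
where the $G_{\omega(0)}(Q_{\omega(0)})$ terms cancel and the conservation of $G_{\omega(0)}$ removes the left-hand values of $u(t)$ and $u(0)$.

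Next I would apply the coercivity Lemma \ref{lemma:coer:2}, which gives
\[
\lambda_0\|\epsilon(t)\|_{H^{1/2}}^2 \le H_0(\epsilon(t),\epsilon(t)),
\]
since the orthogonality conditions \eqref{ortho:condition} are preserved in time by the construction of the modulation parameters. Trivially $H_0(\epsilon(0),\epsilon(0)) \le C\|\epsilon(0)\|_{H^{1/2}}^2$. Substituting into the equality above,
\[
\lambda_0\|\epsilon(t)\|_{H^{1/2}}^2 \le C\|\epsilon(0)\|_{H^{1/2}}^2 + C|\omega(t)-\omega(0)|^2 + \|\epsilon(t)\|_{H^{1/2}}^2\,\beta(\|\epsilon(t)\|_{H^{1/2}}).
\]

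The final step is to absorb the cubic-type remainder. Since $\beta(\varepsilon) \to 0$ as $\varepsilon \to 0$, a standard continuity/bootstrap argument (using \eqref{D:3} and continuity of $t \mapsto \epsilon(t)$ in $H^{1/2}$) ensures that $\|\epsilon(t)\|_{H^{1/2}}$ remains small enough on $[0,t^*]$ that $\beta(\|\epsilon(t)\|_{H^{1/2}}) \le \lambda_0/2$. Then moving this term to the left yields
\[
\tfrac{\lambda_0}{2}\|\epsilon(t)\|_{H^{1/2}}^2 \le C\|\epsilon(0)\|_{H^{1/2}}^2 + C|\omega(t)-\omega(0)|^2,
\]
which is the claimed bound \eqref{D:5}.

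The only subtlety I anticipate is the bootstrap: the coercivity only closes once $\beta(\|\epsilon(t)\|_{H^{1/2}})$ is dominated by $\lambda_0$, so strictly speaking one should argue that the set of times where $\|\epsilon(t)\|_{H^{1/2}} \le \eta_0$ (for a sufficiently small $\eta_0$ depending on $\lambda_0$) is both open and closed in $[0,t^*]$ and contains $0$ thanks to \eqref{D:3}. Otherwise the argument is a clean conservation-plus-coercivity computation with no non-local complications, since all the delicate non-local analysis was already packaged into Lemmas \ref{lemma:J1} and \ref{lemma:coer:2}.
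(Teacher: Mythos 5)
Your proposal is correct and follows essentially the same route as the paper: conservation of $G_{\omega(0)}$, the expansion from Lemma \ref{lemma:J1} at times $t$ and $0$, the coercivity of $H_0$ from Lemma \ref{lemma:coer:2}, and absorption of the $\beta$-remainder for $\|\epsilon(t)\|_{H^{1/2}}$ small. The bootstrap subtlety you flag is real but is handled in the paper simply by working on the time interval where the solution stays in the modulation neighborhood, so the smallness of $\|\epsilon(t)\|_{H^{1/2}}$ is available a priori.
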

\begin{proof}
   Since $G_{\omega(0)}(u(t))$ is the sum of two conserved quantities, we have
\begin{align*}
    G_{\omega(0)}(u(t))=G_{\omega(0)}(u(0)).
\end{align*}
Thus, from Lemma \ref{lemma:J1}, it follows that
\begin{align*}
    H_0(\epsilon(t),\epsilon(t))\leq&  H_0(\epsilon(0),\epsilon(0))+C|\omega(t)-\omega(0)|^2\\
    &+C\|\epsilon(0)\|_{H^\frac{1}{2}}^2\beta\left(\|\epsilon(0)\|_{H^\frac{1}{2}}\right)+C\|\epsilon(t)\|_{H^\frac{1}{2}}^2\beta\left(\|\epsilon(t)\|_{H^\frac{1}{2}}\right).
\end{align*}
By Lemma \ref{lemma:coer:2} and since $H_0(\epsilon(0),\epsilon(0))\leq C\|\epsilon(0)\|_{H^\frac{1}{2}}^2$, we obtain
\begin{align*}
    \lambda_0\|\epsilon(t)\|_{H^\frac{1}{2}}^2\leq H_0(\epsilon(t),\epsilon(t))\leq C|\omega(t)-\omega(0)|^2+C\|\epsilon(0)\|_{H^\frac{1}{2}}^2+C\|\epsilon(t)\|_{H^\frac{1}{2}}^2\beta\left(\|\epsilon(t)\|_{H^\frac{1}{2}}\right).
\end{align*}
Using Lemma \ref{lemma:J1} again and above estimates, we have
\begin{align*}
    \|\epsilon(t)\|_{H^\frac{1}{2}}^2\leq C|\omega(t)-\omega(0)|^2+C\|\epsilon(0)\|_{H^\frac{1}{2}}^2,
\end{align*}
for $ \|\epsilon(t)\|_{H^\frac{1}{2}}$ small enough. Now we complete the proof of this Lemma.
\end{proof}

Finally, we need to control the parameter $|\omega(t)-\omega(0)|$.
\begin{lemma}
Assume that $\omega(t)$ and $\omega(0)$ is given by above. Then the following holds.
\begin{align}\label{D:7}
     |\omega(t)-\omega(0)|\leq C\left(\|\epsilon(t)\|_{L^2}^2+\|\epsilon(0)\|_{L^2}^2\right).
\end{align}
\end{lemma}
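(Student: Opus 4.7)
The plan is to exploit mass conservation together with the orthogonality condition $\Re(\epsilon(t),R_0(t))=0$ in order to isolate the quantity $\int|Q_{\omega(t)}|^2-\int|Q_{\omega(0)}|^2$, and then use the monotonicity assumption \eqref{condition} to invert this difference into a bound on $|\omega(t)-\omega(0)|$.

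First I would expand the conserved mass. Writing $u(t)=R_0(t)+\epsilon(t)$ with $R_0(t,x)=Q_{\omega(t)}(x-x(t))e^{i\gamma(t)}$ and using that the translation/phase act isometrically on $L^2$,
\begin{align*}
\int|u(t)|^2 = \int|Q_{\omega(t)}|^2 + 2\Re\int \bar R_0(t)\epsilon(t)\,dx + \int|\epsilon(t)|^2.
\end{align*}
By the orthogonality condition \eqref{ortho:condition}, the cross term vanishes, so
\begin{align*}
\int|u(t)|^2 = \int|Q_{\omega(t)}|^2 + \int|\epsilon(t)|^2,
\end{align*}
and similarly at $t=0$. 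Subtracting and using $M(u(t))=M(u(0))$ gives
\begin{align*}
\int|Q_{\omega(t)}|^2 - \int|Q_{\omega(0)}|^2 = \int|\epsilon(0)|^2 - \int|\epsilon(t)|^2.
\end{align*}

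Second, I would use a Taylor expansion of the smooth map $\omega\mapsto\int Q_\omega^2$ around $\omega(0)$. Since $\omega(0)$ is close to $\omega_0$ by \eqref{D:3} and the derivative $\frac{d}{d\omega}\int Q_\omega^2$ is continuous and strictly positive at $\omega_0$ by the hypothesis \eqref{condition}, there is a constant $c_0>0$ such that for $\omega(t),\omega(0)$ sufficiently close to $\omega_0$ one has
\begin{align*}
\int|Q_{\omega(t)}|^2 - \int|Q_{\omega(0)}|^2 = \left(\tfrac{d}{d\omega}\!\!\int Q_\omega^2\Big|_{\omega(0)}\right)(\omega(t)-\omega(0)) + O\bigl(|\omega(t)-\omega(0)|^2\bigr),
\end{align*}
with the leading coefficient bounded below by $c_0$. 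Combining this with the mass identity,
\begin{align*}
c_0|\omega(t)-\omega(0)| \le C\bigl(\|\epsilon(t)\|_{L^2}^2+\|\epsilon(0)\|_{L^2}^2\bigr) + C|\omega(t)-\omega(0)|^2.
\end{align*}

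The only delicate point, which I regard as the main obstacle, is absorbing the quadratic remainder into the left-hand side. This is handled by taking $\alpha$ (hence $\|\epsilon(0)\|_{H^{1/2}}$ and, via \eqref{D:5}, also $\|\epsilon(t)\|_{H^{1/2}}$) small enough so that $|\omega(t)-\omega(0)|$ stays within a neighborhood of $0$ in which $C|\omega(t)-\omega(0)|\le c_0/2$; a standard bootstrap/continuity argument on the time interval $[0,t^*]$ is enough to make this rigorous. Dividing through by $c_0/2$ then yields \eqref{D:7}.
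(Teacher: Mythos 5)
Your proposal is correct and follows essentially the same route as the paper: mass conservation plus the orthogonality condition $\Re(R_0,\epsilon)=0$ give $\int Q_{\omega(t)}^2-\int Q_{\omega(0)}^2=\int|\epsilon(0)|^2-\int|\epsilon(t)|^2$, and the nondegeneracy assumption \eqref{condition} together with the closeness of $\omega(t),\omega(0)$ to $\omega_0$ lets one invert the map $\omega\mapsto\int Q_\omega^2$ and absorb the quadratic remainder. The only cosmetic difference is that you evaluate the derivative at $\omega(0)$ while the paper evaluates it at $\omega_0$ and hides the discrepancy in the $\beta$-remainder; both are handled by the same smallness argument.
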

\begin{proof}
    We prove that $|\omega(t)-\omega(0)|$ is quadratic in $\epsilon(t)$. Note that by the conservation of $\|u(t)\|_{L^2}$ and the orthogonality condition $\Re(R_0,\epsilon)=0$, we have
\begin{align}\label{D:6}
    \int Q_{\omega(t)}^2-\int Q_{\omega(0)}^2=-\int|\epsilon(t)|^2+\int|\epsilon(0)|^2.
\end{align}
Recall that we assume $\frac{d}{d\omega}\int Q_{\omega}^2(x)dx|_{\omega=\omega_0}>0$, and $\omega(t),\omega(0)$ are close to $\omega_0$. Thus,
\begin{align*}
    (\omega(t)-\omega(0))\left(\frac{d}{d\omega}\int Q_\omega^2(x)dx|_{\omega=\omega_0}\right)=\int Q_{\omega(t)}^2-\int Q_{\omega(0)}^2+\beta(\omega(t)-\omega(0))(\omega(t)-\omega(0))^2,
\end{align*}
with $\beta(\epsilon)\to0$ as $\epsilon\to0$. This implies that for some $C=C(\omega_0)>0$,
\begin{align*}
    |\omega(t)-\omega(0)|\leq C\left|\int Q_{\omega(t)}^2-\int Q_{\omega(0)}^2\right|.
\end{align*}
Therefore, by \eqref{D:6}, we can obtain the desired result.
\end{proof}

 \begin{proof}[\bf Proof of the stability of a single solitary wave.]
  Combining \eqref{D:3}, \eqref{D:5} and \eqref{D:7}, we have, for some constant $C>0$,
\begin{align*}
    \|\epsilon(t)\|_{H^{\frac{1}{2}}}^2+|\omega(t)-\omega(0)|\leq C\|\epsilon(0)\|_{H^\frac{1}{2}}^2\leq C\alpha,
\end{align*}
for $ \|\epsilon(t)\|_{H^\frac{1}{2}}$ and $|\omega(t)-\omega(0)|$ small enough. Thus, for $\alpha$ small enough,
\begin{align*}
    &\left\|u(t)-Q_{\omega_0}(x-x(t))e^{i\gamma(t)}\right\|_{H^\frac{1}{2}}^2\\
    \leq& \left\|u(t)-R_0(t)\right\|_{H^\frac{1}{2}}^2+\left\|R_0(t)-Q_{\omega_0}(x-x(t))e^{i\gamma(t)}\right\|_{H^\frac{1}{2}}^2\\
    \leq& \|\epsilon(t)\|_{H^\frac{1}{2}}^2+\|\epsilon(0)\|_{H^\frac{1}{2}}^2+C|\omega(t)-\omega_0|\\ 
    \leq& \|\epsilon(t)\|_{H^\frac{1}{2}}^2+\|\epsilon(0)\|_{H^\frac{1}{2}}^2+|\omega(t)-\omega(0)|+C|\omega(0)-\omega_0|\\
    \leq& C\alpha.
\end{align*}
This complete the proof of stability of a single solitary wave.
 \end{proof}


\section{Stability the sum of multi-solitary waves}
In this section, we aim to prove the stability of the multi-solitary waves.

 For $A_0, \sigma_0,\alpha>0$, we define
\begin{align*}
    \Gamma_{A_0}(\alpha,\sigma)=\left\{u\in H^s(\mathbb{R});\inf_k\left\|u(t,\cdot)-\sum_{k=1}^2Q_{\omega_k^0}(\cdot-y_k)e^{i\gamma_k}\right\|_{H^\frac{1}{2}}\leq A_0\left(\alpha+\langle \sigma\rangle^{-1}\right)\right\},
\end{align*}
where $\sigma$ is given by \eqref{def:sig0}.

Let $\omega_k^0$, $x_k^0$, $\gamma_k^0$ be defined as in the statement of Theorem \ref{Thm1}. We claim that Theorem \ref{Thm1} is a consequence of the following proposition.
\begin{proposition}\label{proposition:1}
{(Reduction of the problem)}
There exists $A_0>2$, $\sigma_0>1$, and $\alpha_0>0$ such that for all $u_0\in H^s(\mathbb{R})$, $s\in(\frac{1}{2},1)$, if
\begin{align}\label{ass:sta:1}
   \left\|u_0- \sum_{k=1}^2Q_{\omega_k^0}(\cdot-x_k^0)e^{i\gamma_k^0}\right\|_{H^\frac{1}{2}}\leq\alpha,
\end{align}
where $\sigma>\sigma_0$, $0<\alpha<\alpha_0$, and $x_k^0$ satisfy \eqref{def:sig0}, and if for some $t^*>1$,
\begin{align*}
    u(t)\in \Gamma_{A_0}(\alpha,\sigma),~~\text{for any }~~t\in[0,t^*].
\end{align*}
Then, for any $t\in[0,t^*]$,
\begin{align*}
    u(t)\in \Gamma_{A_0/2}(\alpha,\sigma).
\end{align*}
\end{proposition}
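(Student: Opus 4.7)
The plan is a standard bootstrap/modulation argument mirroring the single-soliton proof of Section \ref{sec:s}, with one essential new ingredient: a \emph{local} mass monotonicity that compensates for the fact that global mass conservation only constrains one linear combination of $\omega_1(t)$ and $\omega_2(t)$. Under the bootstrap hypothesis $u(t)\in\Gamma_{A_0}(\alpha,\sigma)$ on $[0,t^*]$, Corollary \ref{corollary:1} provides the decomposition $u(t,x)=R(t,x)+\epsilon(t,x)$ with the three orthogonality conditions \eqref{decom:12} and the modulation estimates \eqref{decom:15}. The goal is to prove
\[
\|\epsilon(t)\|_{H^{1/2}}^2+\sum_{k=1}^2|\omega_k(t)-\omega_k^0|^2\ \leq\ C_*\bigl(\alpha^2+\langle\sigma\rangle^{-1}\bigr)
\]
on $[0,t^*]$ with a constant $C_*$ independent of $A_0$; fixing $A_0$ sufficiently large relative to $C_*$ and then $\alpha_0,\sigma_0^{-1}$ small improves $\Gamma_{A_0}$ to $\Gamma_{A_0/2}$ via the triangle-inequality argument used at the end of Section \ref{sec:s}.

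For the Lyapunov functional, I pick smooth cutoffs $\psi_k(t,x)$ with $\psi_1+\psi_2\equiv 1$, $\psi_k\equiv 1$ on a ball of radius $\sigma/4$ around $x_k(t)$, and $\|\partial_x\psi_k\|_{L^\infty}\lesssim\sigma^{-1}$. Defining the localized masses $M_k(t)=\int\psi_k(t,x)|u(t,x)|^2\,dx$, set
\[
\mathcal{F}(t)=E(u(t))+\tfrac12\omega_1(0)M_1(t)+\tfrac12\omega_2(0)M_2(t).
\]
Expanding $\mathcal{F}(t)$ around $R(t)$ exactly as in Lemma \ref{lemma:J1}, the orthogonality conditions \eqref{decom:12} kill the linear-in-$\epsilon$ terms up to an interaction remainder of size $O(\langle\sigma\rangle^{-1})$ produced by the overlap $|R_1R_2|\lesssim\sigma^{-2}$ (from the decay \eqref{decay}) and by the deviation of $\psi_k$ from the sharp indicator at the opposite center. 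This yields
\[
\mathcal{F}(t)=\Phi(t)+H(\epsilon,\epsilon)+O\bigl(\langle\sigma\rangle^{-1}\bigr)+\|\epsilon\|_{H^{1/2}}^2\beta\bigl(\|\epsilon\|_{H^{1/2}}\bigr),
\]
where $\Phi(t)$ depends only on the modulation parameters and $H$ is a two-well localized analogue of \eqref{def:H0}. Coercivity $H(\epsilon,\epsilon)\geq\lambda\|\epsilon\|_{H^{1/2}}^2-C\sigma^{-1}\|\epsilon\|_{L^2}^2$ follows by writing $\epsilon=\psi_1\epsilon+\psi_2\epsilon$ and applying Lemma \ref{lemma:coer:2} to each localized piece, with the nonlocal kinetic term $\int|D^{1/2}\epsilon|^2$ controlled through the commutator bound of Lemma \ref{lemma:commu:1} applied to $[D^{1/2},\psi_k]$.

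The central new step, and the main technical obstacle, is the local mass monotonicity. Differentiating $M_k(t)$ along \eqref{equ-hf} produces a boundary term supported in $\mathrm{supp}(\partial_x\psi_k)$, which lies in the gap between the two solitons where $|R|\lesssim\sigma^{-2}$, plus a Calder\'on-type commutator arising from moving $\psi_k$ past $D$. For NLS or gKdV this commutator would be local and trivially bounded; here one must instead use the integral kernel of $D$ recalled in Lemma \ref{lem-formula} together with Lemma \ref{lemma:commu:1} to bound it by $\|\partial_x\psi_k\|_{L^\infty}\|u\|_{L^2}^2\lesssim\sigma^{-1}$. Time integration then yields $|M_k(t)-M_k(0)|\lesssim\alpha^2+\langle\sigma\rangle^{-1}$. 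Since the cutoff and decay properties give $M_k(t)=\int Q_{\omega_k(t)}^2+O(\langle\sigma\rangle^{-1}+\|\epsilon(t)\|_{L^2})$, the nondegeneracy assumption \eqref{condition} and the scalar argument leading to \eqref{D:7} imply
\[
|\omega_k(t)-\omega_k^0|\ \lesssim\ \alpha+\langle\sigma\rangle^{-1/2}.
\]

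Closing the bootstrap: energy conservation $E(u(t))=E(u(0))$ combined with the local mass monotonicity gives $\mathcal{F}(t)-\mathcal{F}(0)=O(\alpha^2+\langle\sigma\rangle^{-1})$; together with $H(\epsilon(0),\epsilon(0))\leq C\alpha^2$, the coercivity of the previous paragraph yields
\[
\lambda\|\epsilon(t)\|_{H^{1/2}}^2\leq C\bigl(\alpha^2+\langle\sigma\rangle^{-1}+\textstyle\sum_k|\omega_k(t)-\omega_k^0|^2\bigr)+o\bigl(\|\epsilon(t)\|_{H^{1/2}}^2\bigr),
\]
so $\|\epsilon(t)\|_{H^{1/2}}^2\lesssim\alpha^2+\langle\sigma\rangle^{-1}$. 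Together with the modulation bound on $\omega_k(t)-\omega_k^0$, the triangle inequality produces $u(t)\in\Gamma_{A_0/2}(\alpha,\sigma)$. The hard step is clearly the Calder\'on commutator estimate in the local mass identity: this is where the nonlocality of $D$ forces a genuinely new argument beyond the NLS/KdV templates and requires both the integral representation of $D$ and Lemma \ref{lemma:commu:1}.
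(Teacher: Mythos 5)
Your overall architecture matches the paper's: bootstrap hypothesis, modulation via Corollary \ref{corollary:1}, a Lyapunov functional of the form ``energy plus $\omega_k(0)$-weighted localized masses'', a localized coercivity statement proved with the Calder\'on commutator and the integral kernel of $D$, and finally parameter control feeding back into the energy expansion. These are exactly the paper's Steps 1--4. However, there is one genuine gap at the step you yourself identify as central, and it is not merely technical.

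The gap is in the local mass monotonicity. You take \emph{time-independent} cutoffs $\psi_k$ with $\|\partial_x\psi_k\|_{L^\infty}\lesssim\sigma^{-1}$ and bound $\left|\frac{d}{dt}M_k(t)\right|\lesssim\sigma^{-1}\bigl(1+\|\epsilon\|_{L^2}^2\bigr)$ via Lemma \ref{lemma:commu:1}. That pointwise-in-time bound is correct, but integrating it over $[0,t]$ gives $|M_k(t)-M_k(0)|\lesssim t\,\sigma^{-1}$, which is not uniformly bounded on $[0,t^*]$ for arbitrary $t^*$; your asserted conclusion $|M_k(t)-M_k(0)|\lesssim\alpha^2+\langle\sigma\rangle^{-1}$ does not follow. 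In the NLS/gKdV templates this is rescued by the relative velocity of the solitons (the cutoff travels at an intermediate speed and the transport term has a sign), but here the solitary waves are \emph{standing}, so no such mechanism is available. The paper's substitute is to let the transition region of the cutoff widen in time, $\Phi_2(x)=\Phi\bigl(\frac{x-x_2}{(t+\sigma)^2}\bigr)$, so that $\|\partial_x\Phi_2\|_{L^\infty}\lesssim(t+\sigma)^{-2}$ and every term in $\frac{d}{dt}\mathcal{J}_2$ is $O\bigl((t+\sigma)^{-2}(1+\|\epsilon\|_{L^2}^2)\bigr)$, whose time integral converges to $O(\sigma^{-1})$ (Lemma \ref{lemma:mass:loc}). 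Without this (or an equivalent integrable-in-time device), your bootstrap does not close.

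A secondary, smaller issue: your identification $M_k(t)=\int Q_{\omega_k(t)}^2+O\bigl(\langle\sigma\rangle^{-1}+\|\epsilon(t)\|_{L^2}\bigr)$ carries an error \emph{linear} in $\epsilon$, yielding only $|\omega_k(t)-\omega_k(0)|\lesssim\|\epsilon\|+\sigma^{-1/2}$. The paper instead runs Step 4 with the two auxiliary cutoffs $\Phi^{\pm}$ sandwiching $\Phi$ and exploits the orthogonality $\Re\int R_k\bar\epsilon=0$ so that the cross term is quadratic, producing the bound \eqref{est:para:larg}, $\sum_k|\omega_k(t)-\omega_k(0)|\lesssim\sup\|\epsilon\|_{H^{1/2}}^2+\sigma^{-1}$, which is what makes the term $\sum_k|\omega_k(t)-\omega_k(0)|^2$ in the expansion \eqref{expan:2} genuinely negligible rather than comparable to $\|\epsilon\|_{H^{1/2}}^2$. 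You should incorporate both the time-dependent cutoff and the quadratic parameter control to make the argument complete.
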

\begin{proof}[\bf Proof of Theorem \ref{Thm1}.]
Assuming that Proposition \ref{proposition:1} is true, we check that it implies Theorem \ref{Thm1}. In fact, suppose that $u_0$ satisfies the assumptions of Theorem \ref{Thm1}. Let $u(t)\in H^s(\mathbb{R})$, $s\in\left(\frac{1}{2},1\right)$, be the solution of \eqref{equ-hf} with the initial data $u_0\in H^s(\mathbb{R})$, $s\in\left(\frac{1}{2},1\right)$. Then, by the continuity of $u(t)$ in $H^s$, there exists $\tau>0$ such that for any $0\leq t\leq \tau$, $u(t)\in \Gamma_{A_0}(\alpha,\sigma)$. Let
\begin{align*}
    t^*=\sup\{t\geq0, u(t')\in \Gamma_{A_0}(\alpha,\sigma),~~ t'\in[0,t]\}.
\end{align*}
Assume for the sake of contradiction that $t^*$ is not $+\infty$, then by Proposition \ref{proposition:1}, for all $t\in[0,t^*]$, $u\in\Gamma_{A_0/2}(\alpha,\sigma) $. Since $u(t)$ is continuous in $H^s$, there exists $\tau'>0$ such that for all $t\in [0,t^*+\tau']$, $u(t)\in\Gamma_{2A_0/3}(\alpha,\sigma)$, which contradicts the definition of $t^*$. Therefore, $t^*=+\infty$, and \eqref{a:2} in Theorem \ref{Thm1} holds.
\end{proof}
 The rest of this section is to prove Proposition \ref{proposition:1}. We divided the proof into the following four steps.

 {\bf Step 1. Decomposition of the solution around two solitary waves.}

 First, since for all $t\in[0,t^*]$, $u\in \Gamma_{A_0}(\alpha,\sigma)$, by choosing $\sigma_0=\sigma_0(A_0)$ large enough and $\alpha_0=\alpha_0(A_0)>0$ small enough, we can apply the Corollary \ref{corollary:1} to $u(t)$ in the time interval $[0,t^*]$. It follows that there exist unique $C^1$-functions $\omega_k:[0,t^*]\to(0,+\infty)$, $x_k,\gamma_k:[0,t^*]\to\mathbb{R}$ such that if we set
\begin{align*}
    \epsilon(t,x)=u(t,x)-R(t,x),
\end{align*}
where
\begin{align}\label{def:Rk}
    R(t,x)=\sum_{k=1}^2R_k(t,x),~~R_k(t,x)=Q_{\omega_k(t)}(x-x_k(t))e^{i\gamma_k(t)}.
\end{align}
Then $\epsilon(t)$ satisfies, for all $k=1,2$ and all $t\in [0,t^*]$,
\begin{align}\label{decom:small}
    &\Re\int R_k(t)\bar{\epsilon}(t)=\Im\int R_k(t)\bar{\epsilon}(t)=\Re\int \partial_xR_k(t)\bar{\epsilon}(t)=0,\notag\\
    &\|\epsilon(t)\|_{H^\frac{1}{2}}+\sum_{k=1}^2|\omega_k(t)-\omega_k^0|+|\omega_k^{\prime}(t)|+|x_k^{\prime}(t)|+|\gamma^{\prime}_k(t)-\omega_k(t)|\leq C_1A_0\left(\alpha+\langle \sigma\rangle^{-1}\right).
\end{align}
Moreover, for all $t\in [0,t^*]$ and for all $k=1,2$, by the assumption \eqref{ass:sta:1} on $u_0$ and Lemma \ref{lemma:decom:unique} applied to $u_0$, we have
\begin{align}\label{decom:4}
    \|\epsilon(0)\|_{H^\frac{1}{2}}+\sum_{k=1}^2|\omega(0)-\omega_k^0|\leq C_1\alpha,
\end{align}
where $C_1$ does not depend on $A_0$.

{\bf Step 2. Local mass monotonicity property.}

First, we introduce the localization functions which will be frequently used in the construction.
Let $\{x_k\}_{k=1}^K$ be the two distinct points in Theorem \ref{Thm1}.

Without loss of generality, in the following, we assume that $x_1<0$ and $x_2>0$, $x_1+x_2=0$.

Let $\Phi: \R\rightarrow [0,1]$ be a smooth function such that $|\Phi^\prime(x)| \leq C \sigma^{-1}$ for some $C>0$, where $\sigma$ is defined as \eqref{def:sig0}, $\Phi(x)=0$ for $x\leq 1$ and $\Phi(x)=1$ for $x\geq 3$. Define
\begin{align*}
 \Phi_1(x)=1~~\text{and}~~   \Phi_2(x) :=\Phi\left(\frac{x-x_2}{(t+\sigma)^2}\right).
\end{align*}

We also introduce a functional adapted to the stability problem for two solitary waves. We define
\begin{align}\label{def:G}
    G(t)=E(u(t))+ \frac{1}{2}\mathcal{J}(t),
\end{align}
where $E(u(t))$ is given by \eqref{energy} and
\begin{align*}
    \mathcal{J}(t)=\sum_{k=1}^2\mathcal{J}_k(t),
\end{align*}
and
\begin{align}\label{def:JK}
    \mathcal{J}_k(t)=\omega_k(0)\int|u(t,x)|^2\Phi_k(x)dx.
\end{align}
\begin{lemma}\label{lemma:mass:loc}
Let $\mathcal{J}_k$ be defined as \eqref{def:JK}, then we have
\begin{align}\label{est:mass:local}
\mathcal{J}_2(t)-\mathcal{J}_2(0)\leq \frac{C}{\sigma}\sup_{0<\tau<t}\|\epsilon(\tau)\|_{L^2}^2+\frac{C}{\sigma}.
\end{align}
where $\sigma$ is defined as \eqref{def:sig0}.
\end{lemma}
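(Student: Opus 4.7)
My plan is to derive a differential inequality of the form $\frac{d}{dt}\mathcal J_2(t)\lesssim (t+\sigma)^{-2}\bigl(1+\|\epsilon(t)\|_{L^2}^2\bigr)$ and then integrate in time, the integrability of $(t+\sigma)^{-2}$ producing the desired $\sigma^{-1}$ decay. The main issue is that $D$ does not commute with the cutoff $\Phi_2$, and the computation naturally produces a commutator that must be controlled via the Calder\'on estimate of Lemma~\ref{lemma:commu:1}, possibly combined with the kernel representation of Lemma~\ref{lem-formula}.

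First I would compute $\frac{d}{dt}\mathcal J_2(t)$. From $i\partial_t u=Du-|u|^{p-1}u$ one gets $\partial_t|u|^2 = 2\Im(\bar u\,Du)$, since $\bar u|u|^{p-1}u=|u|^{p+1}$ is real and contributes nothing to the imaginary part. Multiplying by $\omega_2(0)\Phi_2$, integrating in $x$, and symmetrising via the self-adjointness of $D$ on $L^2(\mathbb{R})$ gives
\[
\frac{d}{dt}\mathcal J_2(t)=-\omega_2(0)\,\Im\int \bar u(t,x)\,[D,\Phi_2]u(t,x)\,dx,
\]
where $[D,\Phi_2]u=D(\Phi_2 u)-\Phi_2\,Du$. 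The non-locality of $D$ is what makes this step delicate: unlike the NLS or gKdV setting of \cite{MMT2002CMP,MMT2006Duke}, $[D,\Phi_2]u$ is not supported on $\mathrm{supp}(\Phi_2')$, so one cannot directly exploit the fact that $R_1,R_2$ are localized away from the transition region of $\Phi_2$.

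Next I would control the commutator. Since $\Phi_2(x)=\Phi((x-x_2)/(t+\sigma)^2)$ with $\Phi$ a fixed smooth profile, the chain rule yields $\|\Phi_2'\|_{L^\infty}\leq C(t+\sigma)^{-2}$, and Lemma~\ref{lemma:commu:1} immediately gives $\|[D,\Phi_2]u(t)\|_{L^2}\leq C(t+\sigma)^{-2}\|u(t)\|_{L^2}$. Writing $u=R+\epsilon$ as in \eqref{def:Rk} and using $\|R(t)\|_{L^2}\leq C$ from \eqref{decom:small}, Cauchy--Schwarz yields
\[
\Big|\Im\int \bar u\,[D,\Phi_2]u\,dx\Big|\leq \frac{C}{(t+\sigma)^2}\bigl(1+\|\epsilon(t)\|_{L^2}^2\bigr).
\]
Integrating from $0$ to $t$ and using $\int_0^\infty(\tau+\sigma)^{-2}\,d\tau\leq \sigma^{-1}$ gives the claimed bound, with the $\sup_{0<\tau<t}\|\epsilon(\tau)\|_{L^2}^2$ emerging from pulling the sup out of the integrand.

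The hardest step, which we will need to revisit in the two-soliton analysis, is precisely the non-locality just described. The Calder\'on estimate is sharp enough for this lemma, but if at some later point one needs a finer bound on the pure-soliton contribution $\int \bar R\,[D,\Phi_2]R\,dx$ that genuinely exploits the algebraic decay of $R$ on $\mathrm{supp}(\Phi_2')$, one should instead appeal to the integral representation of Lemma~\ref{lem-formula},
\[
[D,\Phi_2]R(x)=(D\Phi_2)(x)R(x)-C\int \frac{(\Phi_2(x+y)-\Phi_2(x))(R(x+y)-R(x))}{|y|^2}\,dy,
\]
and split the $y$-integral at $|y|\sim(t+\sigma)^2$, using Lipschitz control of $\Phi_2$ for small $|y|$ and the algebraic decay $|Q_{\omega_k}|\lesssim \langle x-x_k\rangle^{-2}$ from \eqref{decay} for large $|y|$; this is the same non-local mechanism that will presumably be needed for Lemma~\ref{lemma:coer:m2}.
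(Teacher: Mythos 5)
Your treatment of the main term is sound and in fact more economical than the paper's. After symmetrising with the self-adjointness of $D$, you bound the resulting commutator term by Cauchy--Schwarz together with the Calder\'on estimate of Lemma~\ref{lemma:commu:1} applied to the \emph{whole} solution $u$, which gives $C\|\Phi_2'\|_{L^\infty}\|u\|_{L^2}^2\leq C(t+\sigma)^{-2}\bigl(1+\|\epsilon\|_{L^2}^2\bigr)$ (or even $C(t+\sigma)^{-2}$ outright, by conservation of mass), and then integrate in time. The paper instead first splits $u=R+\epsilon$ and handles the $R$--$R$ and $R$--$\epsilon$ contributions via the integral kernel representation of Lemma~\ref{lem-formula}, partitioning the $(x,y)$-plane into regions and invoking the algebraic decay of $Q$, reserving Calder\'on only for the $\epsilon$--$\epsilon$ piece. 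For the statement of this particular lemma the extra precision buys nothing, since the right-hand side of \eqref{est:mass:local} already tolerates an $O(1/\sigma)$ term with an unspecified constant; your cruder global bound implies it.

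There is, however, one genuine gap: your identity for $\frac{d}{dt}\mathcal{J}_2(t)$ omits the contribution of the time dependence of the cutoff. Since $\Phi_2(x)=\Phi\bigl((x-x_2)/(t+\sigma)^2\bigr)$ depends on $t$ through its scale, differentiating under the integral produces the additional term $\omega_2(0)\int|u|^2\,\partial_t\Phi_2\,dx$ with $\partial_t\Phi_2=-\tfrac{2(x-x_2)}{(t+\sigma)^3}\,\Phi'\bigl(\tfrac{x-x_2}{(t+\sigma)^2}\bigr)$ (plus a transport piece if $x_2=x_2(t)$); this is the term $IV$ in the paper's computation \eqref{du2-bc}. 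This term cannot simply be absorbed by taking absolute values: on $\mathrm{supp}\,\Phi'$ one only has $|\partial_t\Phi_2|\lesssim(t+\sigma)^{-1}$, whose time integral produces a logarithm rather than $O(1/\sigma)$. The correct repair is to observe that on $\mathrm{supp}\,\Phi'\bigl(\tfrac{x-x_2}{(t+\sigma)^2}\bigr)$ one has $x-x_2>0$ and $\Phi'\geq0$, so $\partial_t\Phi_2\leq0$ and the term may be discarded for the one-sided estimate $\mathcal{J}_2(t)-\mathcal{J}_2(0)\leq\cdots$; this sign is precisely the monotonicity the lemma's name refers to, and it is the reason the conclusion is one-sided. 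You should add this term to your derivative formula and dispose of it by its sign before integrating.
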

\begin{proof}
By equation \eqref{equ-hf} and $u(t,x)=R(t,x)+\epsilon(t,x)$, where $R$ is given by \eqref{def:Rk}, we deduce that
\begin{align} \label{du2-bc}
 \frac{d}{dt}\mathcal{J}_2(t)=&\omega_1(0)\frac{d}{dt}\int |u|^2\Phi_2dx
 = 2\omega_1(0){\Im}\int \bar{u} D u\Phi_2 dx+2\omega_1(0)\int |u|^2\Phi_2^{\prime}\notag\\
 =& 2\omega_1(0){\Im}\int \bar{R} DR\Phi_2 dx+2\omega_1(0){\Im}\int (\bar{R} D\epsilon+\bar{\epsilon} DR)\Phi_2 dx\notag\\
 &+2\omega_1(0){\Im}\int \bar{\epsilon} D\epsilon\Phi_2 dx+2\omega_1(0)\int |u|^2\Phi_2^{\prime}\notag\\
 =&:I+II+III+IV.
 \end{align}
We shall estimate the above three terms separately. Due to the non-local operator $D$, the estimates below are more delicate. In fact, the integration representation of the operator $D$ and the point-wise decay property of the ground state are used to obtain the right decay orders. And the Calder\'{o}n's estimate in \cite{C1965PNA} is also used below.

{\bf $(i)$ Estimate of $I$.}
The first term on the right-hand side of \eqref{du2-bc} can be decomposed as
\begin{align}
    \label{dec-i}
I&=2\omega_1(0){\Im}\int \bar{R}_2 DR_2\Phi_2 dx+2\omega_1(0){\Im}\int (\bar{R}_1 DR_2+\bar{R}_2DR_1)\Phi_2dx+2\omega_1(0){\Im}\int \bar{R}_1 DR_1\Phi_2 dx\notag\\
&=:I_1+I_2+I_3.
\end{align}
Since $D$ is self-adjoint, then integrating by parts and using the $\Im (\bar{u}v)=-\Im(u\bar{v})$, we have
\begin{align*}
    2\Im\int \bar{u}Du\phi=\Im \int uD(\bar{u}\phi)-\Im\int uD\bar{u}\phi
\end{align*}
From the above equality and the identity \eqref{id-fra2}, we deduce that
\begin{align*}
   &|I_1|\\
   \leq &\left|{\Im}\int R_2 (D(\bar{R}_2\Phi_2)-D\bar{R}_2\Phi_2) dx\right|\\
=&C\left|{\Im}\int R_2(x)dx\int\frac{\bar{R}_2(x+y)(\Phi_2(x+y)-\Phi_2(x))-\bar{R}_2(x-y)(\Phi_2(x)-\Phi_2(x-y))}
{|y|^2}dy\right|\\
=&C\Bigg|{\Im}\int R_2(x)dx\int\frac{\bar{R}_2(x+y)\left[\Phi\left(\frac{x-x_2+y}{(t+\sigma)^2}\right)-\Phi\left(\frac{x-x_2}{(t+\sigma)^2}\right)\right]}
{|y|^2}\notag\\
&~~~~~-\frac{\bar{R}_2(x-y)\left[\Phi\left(\frac{x-x_2}{(t+\sigma)^2}\right)-\Phi\left(\frac{x-x_2-y}{(t+\sigma)^2}\right)\right]}
{|y|^2}dy\Bigg|\notag\\
\leq C& \sum\limits_{j=1}^4
   \Bigg|{\Im} \iint_{\Omega_{1,j}}  R_2(x) \frac{\bar{R}_2(x+y)[\left[\Phi\left(\frac{x-x_2+y}{(t+\sigma)^2}\right)-\Phi\left(\frac{x-x_2}{(t+\sigma)^2}\right)\right]}{|y|^2}dxdy\\
   &~~~~-{\Im} \iint_{\Omega_{1,j}}  R_2(x)\frac{\bar{R}_2(x-y)\left[\Phi\left(\frac{x-x_2}{(t+\sigma)^2}\right)-\Phi\left(\frac{x-x_2-y}{(t+\sigma)^2}\right)\right]}{|y|^2} dxdy\Bigg| \\
   \leq C& \sum\limits_{j=1}^4
   \Bigg|{\Im} \iint_{\Omega_{1,j}}  R_2(x) \frac{\bar{R}_2(x+y)[\left[\Phi\left(\frac{x-x_2+y}{(t+\sigma)^2}\right)-\Phi\left(\frac{x-x_2}{(t+\sigma)^2}\right)\right]}{|y|^2}dxdy\\
   &~~~~-{\Im} \iint_{\Omega_{1,j}}  R_2(x)\frac{\bar{R}_2(x-y)\left[\Phi\left(\frac{x-x_2}{(t+\sigma)^2}\right)-\Phi\left(\frac{x-x_2-y}{(t+\sigma)^2}\right)\right]}{|y|^2} dxdy\Bigg| \\
=&:C \sum\limits_{j=1}^4 I_{1,j},
\end{align*}
where $\mathbb{R}^2$ is partitioned into four regimes
\begin{align*}
    \Omega_{1,1}:&=\left\{\left|\frac{x-x_2}{(t+\sigma)^2}\right|\leq 3,~ \left|\frac{y}{(t+\sigma)^2}\right|\leq 1\right\},\\
    \Omega_{1,2}:&=\left\{\left|\frac{x-x_2}{(t+\sigma)^2}\right|\leq 3,~ \left|\frac{y}{(t+\sigma)^2}\right|> 1\right\},\\
    \Omega_{1,3}:&=\left\{\left|\frac{x-x_2}{(t+\sigma)^2}\right|>3,~ \left|\frac{y}{(t+\sigma)^2}\right|\leq 1\right\},\\
    \Omega_{1,4}:&=\left\{\left|\frac{x-x_2}{(t+\sigma)^2}\right|> 3,~ \left|\frac{y}{(t+\sigma)^2}\right|> 1\right\}.
\end{align*}

{\bf Now we estimate $I_{1,j}$, $1\leq j\leq 4$.}

For the term $I_{1,1}$. By Taylor's expansion, there exists some $|\theta|\leq 1$ such that for any differentiable function $f$, we have
\begin{align}
    \label{Taylor}
&|f(x+y)(\Phi_2(x+y)-\Phi_2(x))-f(x-y)(\Phi_2(x)-\Phi_2(x-y))|\notag\\
&\leq |y|^2(|\nabla f (x+\theta y)| \|\nabla\Phi_2\|_{L^\infty}+|f(x)|\|\nabla^2\Phi_2\|_{L^\infty})\notag\\
&\leq C\frac{|y|^2}{(t+\sigma)^2}|(|\nabla f(x+\theta y)|+|f(x-y)|).
\end{align}
Thus, from \eqref{Taylor}, we get
\begin{align*}
    |I_{1,1}|\leq &\frac{C}{(t+\sigma)^2}\left|\int_{1\leq\frac{x-x_2}{(t+\sigma)^2}<3} Q_{\omega_2}(x-x_2) \int_{\left|\frac{y}{(t+\sigma)^2}\right|<1}(|\nabla Q_{\omega_2}(x+\theta y-x_2)|+|Q_{\omega_2}(x-x_2-y)|)dydx\right|
\end{align*}
Since $Q\in C^1(\mathbb{R})$ and bounded , then we have
\begin{align}
    I_{1,1}
    \leq C\frac{1}{(t+\sigma)^2}.
\end{align}
For the term $I_{1,2}$. Thus, from the definition of $\Phi$ and the bounded of $Q_{\omega_2}$, we get
\begin{align*}
    |I_{1,2}|\leq &\frac{C}{(t+\sigma)^2}\left|\int_{\left|\frac{x-x_2}{(t+\sigma)^2}\right|<3} Q_{\omega_2}(x-x_2) \int_{\left|\frac{y}{(t+\sigma)^2}\right|\geq1}\frac{Q_{\omega_2}(x-y-x_2)+Q_{\omega_2}(x+y-x_2)}{|y|^2}dydx\right|\\
    \leq&\frac{C}{(t+\sigma)^2}\left|\int_{\left|\frac{x-x_2}{(t+\sigma)^2}\right|<3} Q_{\omega_2}(x-x_2) \int_{\left|\frac{y}{(t+\sigma)^2}\right|\geq1}\frac{1}{|y|^2}dydx\right|\\
    \leq&\frac{C}{(t+\sigma)^4}.
\end{align*}
For the term $I_{1,3}$, by the definition of
 $\Phi$, we deduce $I_{1,3}=0$.

For the term $I_{1,4}$. Since $(x,y)\in \Omega_{1,4}$, by the definition of \eqref{def:sig0}, we have
\begin{align*}
    |I_{1,4}|\leq &\frac{C}{(t+\sigma)^2}\left|\int_{\left|\frac{x-x_2}{(t+\sigma)^2}\right|>3} Q_{\omega_2}(x-x_2) \int_{\left|\frac{y}{(t+\sigma)^2}\right|\geq1}\frac{Q_{\omega_2}(x-y-x_2)+Q_{\omega_2}(x+y-x_1)}{|y|^2}dydx\right|\\
    \leq&\frac{C}{(t+\sigma)^2}\left|\int_{\left|\frac{x-x_2}{(t+\sigma)^2}\right|>3} Q_{\omega_2}(x-x_2) \int_{\left|\frac{y}{(t+\sigma)^2}\right|\geq1}\frac{1}{|y|^2}dydx\right|\\
    \leq&\frac{C}{(t+\sigma)^4}.
\end{align*}
Hence, we conclude that
\begin{align}\label{est:i1}
|I_1|\leq C(t+\sigma)^{-2}.
\end{align}

{\bf Estimate of $I_2$.} The second term $I_2$ in \eqref{dec-i} can be estimated by renormalization \eqref{def:Rk}. In fact, by the definition of $\Phi_2$, we get
\begin{align*}
    &\Im\int (\bar{R}_1 DR_2+\bar{R}_2DR_1)\Phi_2 dx\notag\\
    \leq&\Im\int_{\frac{x-x_2}{(t+\sigma)^2}\geq1} (\bar{R}_1 DR_2+\bar{R}_2DR_1)\\
    =&\Im\int_{\frac{x-x_2}{(t+\sigma)^2}\geq1} (Q_{\omega_1}(x-x_1) DQ_{\omega_2}(x-x_2)+Q_{\omega_2}(x-x_2)DQ_{\omega_1}(x-x_1))\notag\\
    \leq&C\Im\int_{\frac{x-x_2}{(t+\sigma)^2}\geq1} \frac{1}{|x-x_2|^2}+\frac{1}{|x-x_2|^3}dx\\
    \leq&C(t+\sigma)^{-2}.
\end{align*}
where we have used the definition of $\Phi$ and $Q_{\omega_k}$ is uniformly bounded.

Hence
\begin{align}\label{est:i2}
    |I_2|\leq C (t+\sigma)^{-2}.
\end{align}

{\bf  Estimate of $I_3$.}
From \eqref{def:Rk}, we have
\begin{align}\label{est:i3}
   |I_3|=&\left| 2\omega_2(0)\Im\int \bar{R}_{1}DR_{1}\Phi_2(x)dx\right|\notag\\
   \leq&C\int_{\frac{x-x_2}{(t+\sigma)^2}\geq1} Q_{\omega_2}(x-x_1)DQ_{\omega_2}(x-x_1)dx\notag\\
   \leq&C\int_{\frac{x-x_2}{(t+\sigma)^2}\geq1}\frac{1}{|x-x_1|^2}dx\notag\\
   \leq&C\frac{1}{(t+\sigma)^2+x_2-x_1}\notag\\
   \leq&C\frac{1}{(t+\sigma)^2}.
\end{align}
where we have used the fact that $x_2-x_1>0$.

Thus, from \eqref{est:i1}, \eqref{est:i2} and \eqref{est:i3}, we deduce
\begin{align}\label{est-i}
    |I|\leq C(t+\sigma)^{-2}.
\end{align}

{\bf $(ii)$ Estimate of $II$.}
Regarding the second term $II$ on the right-hand side of \eqref{du2-bc},
we first apply the integration by parts formula and $\Im(\bar{u}v)=-\Im(u\bar{v})$ to get
\begin{align*}
{\Im}\int (\bar{R} D\epsilon+\bar{\epsilon} DR)\Phi_2 dx=&\sum_{k=1}^{2}{\Im}\int (\bar{R}_k D\epsilon+\bar{\epsilon} DR_k)\Phi_2 dx \\
=&\sum_{k=1}^{2}{\Im}\int \epsilon(D(\bar{R}_k\Phi_2)-D\bar{R}_k\Phi_2) dx
=: \sum\limits_{k=1}^2 II_k.
\end{align*}
Then, by \eqref{id-fra2} in Lemma \ref{lem-formula}, for $1\leq k\leq 2$,
\begin{align*}
   |II_k|=&C\left|{\Im}\int \epsilon(x)dx\int\frac{\bar{R}_k(x+y)(\Phi_2(x+y)-\Phi_2(x))-\bar{R}_k(x-y)(\Phi_2(x)-\Phi_2(x-y))}{|y|^2}dy\right|\\
   =&C\bigg|{\Im}\int \epsilon(x)dx\bigg(\int\frac{\bar{R}_k(x+y)\left[\Phi\left(\frac{x-x_2+y}{(t+\sigma)^2}\right)-\Phi\left(\frac{x-x_2}{(t+\sigma)^2}\right)\right]}{|y|^2}dy\\
   &-\int\frac{\bar{R}_k(x-y)\left[\Phi\left(\frac{x-x_2}{(t+\sigma)^2}\right)-\Phi\left(\frac{x-x_2-y}{(t+\sigma)^2}\right)\right]}{|y|^2}dy\bigg)\bigg|\\
\leq& C \sum\limits_{j=1}^3
     \Bigg|{\Im}\iint_{\Omega_{2,j}}  \epsilon(x)  \frac{\bar{R}_k(x+y)\left[\Phi\left(\frac{x-x_2+y}{(t+\sigma)^2}\right)-\Phi\left(\frac{x-x_2}{(t+\sigma)^2}\right)\right]}{|y|^2}\\
     &~~~~-{\Im}\iint_{\Omega_{2,j}}  \epsilon(x)\frac{\bar{R}_k(x-y)\left[\Phi\left(\frac{x-x_1}{(t+\sigma)^2}\right)-\Phi\left(\frac{x-x_1-y}{(t+\sigma)^2}\right)\right]}{|y|^2}  dxdy\Bigg| \\
=&: C \sum\limits_{j=1}^3 II_{k,j},
\end{align*}
where
\begin{align*}
    &\Omega_{2,1} := \left\{\left|\frac{x-x_2}{(t+\sigma)^2}\right|\leq 3,~ \left|\frac{y}{(t+\sigma)^2}\right|\leq 1\right\},\\
    &\Omega_{2,2} := \left\{\left|\frac{x-x_2}{(t+\sigma)^2}\right|> 3,~ \left|\frac{y}{(t+\sigma)^2}\right|\leq 1\right\},\\
    &\Omega_{2,3} := \left\{\left|\frac{y}{(t+\sigma)^2}\right|> 1\right\}.
\end{align*}
For $II_{k,1}$, by \eqref{Taylor}, we get
\begin{align*}
   II_{k,1}&\leq \frac{C}{(t+\sigma)^2}\int_{1\leq\frac{x-x_2}{(t+\sigma)^2}<3}|\epsilon(x)|dx\int_{\left|\frac{y}{(t+\sigma)^2}\right|\leq 1}(|\nabla R_k(x+\theta y)|+|R_k(x-y)|)dy\\
\leq& C\int_{1\leq\frac{x-x_2}{(t+\sigma)^2}<3}|\epsilon(x)|\frac{1}{|x-x_k|^2}dx\\
\leq&C\|\epsilon\|_{L^2}(t+\sigma)^{-3}.
\end{align*}
By the definition of $\Omega_{2,2}$ and $\Phi$, we can obtain $II_{k,2}=0$.

The term $II_{k,3}$ can be estimated by
\begin{align*}
  II_{k,3}\leq C\int_{\left|\frac{y}{(t+\sigma)^2}\right|> 1}|y|^{-2}dy\int |\epsilon(x)|(|R_k(x+y)|+|R_k(x-y)|) dx
\leq C(t+\sigma)^{-2}\|\epsilon\|_{L^2},
\end{align*}
where we have used that the H\"older inequality and  $\|Q\|_{L^2}$ is bounded.

Hence, we conclude that
\begin{align}\label{est-ii}
|II|\leq C(t+\sigma)^{-2}\|\epsilon\|_{L^2}.
\end{align}

{\bf $(iii)$ Estimate of $III$.}
At last, we consider the third term on the right-hand side of \eqref{du2-bc}.
By using the integration by parts formula,
\begin{align*}
   III=2{\rm Im}\int \bar{\epsilon} D\epsilon\Phi_2 dx={\rm Im}\int  \epsilon(D(\bar{\epsilon}\Phi_2)-D\bar{\epsilon}\Phi_2) dx.
\end{align*}
In view of the definition of $\Phi_k$,
we apply the Calder\'on estimate in Lemma \ref{lemma:commu:1} to get
\begin{align*}
\|D(\bar{\epsilon}\Phi_2)-D\bar{\epsilon}\Phi_2\|_{L^2}\leq C(t+\sigma)^{-2}\|\epsilon\|_{L^{2}},
\end{align*}
which yields
\begin{align}
    \label{est-iii}
|III|\leq C(t+\sigma)^{-2}\|\epsilon\|_{L^2}^2.
\end{align}

{\bf $(iv)$ Estimate of $IV$.} By the definition of $\Phi$, the parameter estimate \eqref{decom:small} and the conservation of mass, we can obtain
\begin{align}\label{est:iv}
    |IV|\leq C(t+\sigma)^{-2}A_0\left(\alpha+\frac{1}{\sigma}\right)\leq C(t+\sigma)^{-2}.
\end{align}

Now, combining the estimates \eqref{est-i}, \eqref{est-ii}, \eqref{est-iii} and \eqref{est:iv} together, we get
\begin{align*}
  \left|\frac{d}{dt}\int |u|^2\Phi_2dx\right|\leq C(t+\sigma)^{-2}\left(1+\|\epsilon\|_{L^2}+\|\epsilon\|_{L^2}^2\right).
\end{align*}
By integration between $0$ and $t$, we get
\begin{align*}
  \mathcal{J}_k(t)-\mathcal{J}_k(0)\leq \frac{C}{t+\sigma}\sup_{0<\tau<t}\|\epsilon(\tau)\|_{L^2}^2+\frac{C}{t+\sigma}.
\end{align*}
This completes the proof of Lemma \ref{lemma:mass:loc}.
\end{proof}

The analogue of Lemma \ref{lemma:J1} for the case of multi-solitary wave solutions is the following result.
\begin{lemma}
Let $G$ be defined as \eqref{def:G}. For all $t\in[0,t^*]$, we have
\begin{align}\label{expan:2}
     G(u(t))=&\sum_{k=1}^2G(Q_{\omega_k(0)})+H_K(\epsilon,\epsilon)+\|\epsilon(t)\|_{H^\frac{1}{2}}^2\beta\left(\|\epsilon(t)\|_{H^\frac{1}{2}}\right)\notag\\
     &+\sum_{k=1}^2\mathcal{O}(|\omega_k(t)-\omega_k(0)|^2)+\mathcal{O}\left(\langle \sigma\rangle^{-2}\right),
\end{align}
with $\beta(\epsilon)\to0$ as $\epsilon\to0$, where
\begin{align*}
H_K(\epsilon,\epsilon)= \frac{1}{2}\int|D^{\frac{1}{2}}\epsilon|^2+\sum_{k=1}^2\frac{\omega_k(0)}{2}\int|\epsilon|^2\Phi_k(t)-\sum_{k=1}^2\int \left(\frac{1}{2}|R_k|^{p-1}|\epsilon|^2+\frac{p-1}{2}|R_k|^{p-3}(\Re(R_k\epsilon))^2\right).
\end{align*}
\end{lemma}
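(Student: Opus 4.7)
The strategy mirrors the single-soliton case of Lemma~\ref{lemma:J1}: substitute $u(t)=R(t)+\epsilon(t)$ with $R=R_1+R_2$ and expand each piece of $G(u)=E(u)+\tfrac{1}{2}\sum_{k=1}^{2}\omega_k(0)\int|u|^{2}\Phi_k\,dx$ in powers of $\epsilon$. I would organize the expansion into (i) a constant-in-$\epsilon$ part, (ii) a linear-in-$\epsilon$ part that vanishes up to small errors because of the orthogonality conditions recorded in \eqref{decom:small}, (iii) a quadratic form in $\epsilon$ that matches $H_K(\epsilon,\epsilon)$, and (iv) a remainder of order $\|\epsilon\|_{H^{1/2}}^{2}\beta(\|\epsilon\|_{H^{1/2}})$. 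The only genuinely new feature compared with Lemma~\ref{lemma:J1} is that $R_1$ and $R_2$ are centered at points separated by at least $\sigma$, so every cross interaction generated by the expansion has to be shown to be at most $\mathcal{O}(\langle\sigma\rangle^{-2})$.

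For the $\epsilon$-free part $G(R)$ I would split
\[
\int|D^{1/2}R|^{2}\,dx=\sum_{k=1}^{2}\int|D^{1/2}R_k|^{2}\,dx+2\,\Re\int R_1\,\overline{DR_2}\,dx,
\]
and eliminate the cross term by substituting the profile identity $DR_k=|R_k|^{p-1}R_k-\omega_k(t)R_k$ coming from \eqref{equ-elliptic}. Combined with the pointwise decay $|R_k(x)|\lesssim\langle x-x_k\rangle^{-2}$ from \eqref{decay} and the separation $|x_2-x_1|\geq\sigma$, this produces an $\mathcal{O}(\langle\sigma\rangle^{-2})$ error. A pointwise bound $\bigl||R_1+R_2|^{p+1}-|R_1|^{p+1}-|R_2|^{p+1}\bigr|\lesssim|R_1|^{p}|R_2|+|R_1||R_2|^{p}$ handles the nonlinear contribution in the same way, and the support properties of $\Phi_k$ reduce the localized masses $\int|R|^{2}\Phi_k\,dx$ to appropriate combinations of $\|Q_{\omega_j(t)}\|_{L^2}^{2}$ up to polynomially small residues. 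Lemma~\ref{lemma:energy:realtion} applied soliton-by-soliton then upgrades $Q_{\omega_k(t)}$ to $Q_{\omega_k(0)}$ at the cost of $\mathcal{O}(|\omega_k(t)-\omega_k(0)|^{2})$.

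The linear-in-$\epsilon$ part produced by the expansion is
\[
\Re\int\bigl(DR-|R|^{p-1}R\bigr)\overline\epsilon\,dx+\sum_{k=1}^{2}\omega_k(0)\,\Re\int R\,\overline\epsilon\,\Phi_k\,dx.
\]
Using once again $DR_k-|R_k|^{p-1}R_k=-\omega_k(t)R_k$ together with the pointwise approximation $|R|^{p-1}R=\sum_k|R_k|^{p-1}R_k+\mathcal{O}(\langle\sigma\rangle^{-2})$, the first integral collapses to $-\sum_k\omega_k(t)\,\Re\int R_k\overline\epsilon\,dx+\mathcal{O}(\langle\sigma\rangle^{-2})\|\epsilon\|_{L^2}$, and the diagonal pieces $\Re\int R_k\overline\epsilon\,dx$ vanish by \eqref{decom:small}. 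Writing the second sum as $\sum_{k,j}\omega_k(0)\,\Re\int R_j\overline\epsilon\,\Phi_k\,dx$, the pieces with $k=1$ again vanish by orthogonality since $\Phi_1\equiv 1$, while the remaining pieces $\Re\int R_j\overline\epsilon\,\Phi_2\,dx$ are controlled by $\|R_j\Phi_2\|_{L^2}\lesssim\langle\sigma\rangle^{-1}$, as the support of $\Phi_2$ lies at distance at least $(t+\sigma)^2$ from each $x_j$. Combined with the a priori bound $\|\epsilon\|_{H^{1/2}}\leq C$ from \eqref{decom:small}, this turns the full linear contribution into $\mathcal{O}(\langle\sigma\rangle^{-2})$.

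The quadratic part of the expansion reads
\[
\tfrac{1}{2}\int|D^{1/2}\epsilon|^{2}\,dx+\tfrac{1}{2}\sum_{k=1}^{2}\omega_k(0)\int|\epsilon|^{2}\Phi_k\,dx-\tfrac{1}{2}\int|R|^{p-1}|\epsilon|^{2}\,dx-\tfrac{p-1}{2}\int|R|^{p-3}\bigl(\Re(R\overline\epsilon)\bigr)^{2}\,dx,
\]
and replacing $|R|^{p-1}$ and $|R|^{p-3}(\Re(R\overline\epsilon))^{2}$ pointwise by the corresponding single-soliton sums (the replacement error is $\mathcal{O}(\langle\sigma\rangle^{-2})$ in $L^\infty$, hence $\mathcal{O}(\langle\sigma\rangle^{-2})$ after integration against the bounded $|\epsilon|^{2}$) reproduces exactly $H_K(\epsilon,\epsilon)$. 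The cubic and higher-order remainders of the Taylor expansion of $|u|^{p+1}$ are bounded by $C(\|R\|_{L^\infty}^{p-2}\|\epsilon\|_{L^{3}}^{3}+\|\epsilon\|_{L^{p+1}}^{p+1})$, which via Sobolev embedding in one dimension is of the form $\|\epsilon\|_{H^{1/2}}^{2}\beta(\|\epsilon\|_{H^{1/2}})$ with $\beta(s)\to 0$ as $s\to 0$. The principal obstacle throughout is the non-local cross interaction $\int R_1\,\overline{DR_2}\,dx$ and its analogues where $D$ acts across different soliton scales; the key reduction is always to use \eqref{equ-elliptic} together with \eqref{decay} to trade an apparently non-local quantity for an integrable tail of size $\langle\sigma\rangle^{-2}$.
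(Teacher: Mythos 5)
Your proposal follows exactly the route the paper intends: the paper omits this proof entirely, stating only that it is ``similar to that of Lemma \ref{lemma:J1}'', and your expansion of $G(u)=E(u)+\tfrac12\mathcal{J}$ around $R=R_1+R_2$ --- killing the linear terms via the orthogonality conditions and the profile equation, identifying the quadratic part with $H_K$, and bounding every cross-interaction by the $\langle x\rangle^{-2}$ decay of $Q$ together with the separation $|x_1-x_2|\geq\sigma$ --- is precisely that argument carried out in detail. The only quibble is that for $1<p<2$ the pointwise replacement $|R|^{p-1}=\sum_k|R_k|^{p-1}+\mathcal{O}(\langle\sigma\rangle^{-2})$ should read $\mathcal{O}(\langle\sigma\rangle^{-2(p-1)})$; since this error multiplies $|\epsilon|^2$ it is still absorbable into the stated remainders, so nothing breaks.
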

\begin{proof}
The proof of this Lemma is similar to that of Lemma \ref{lemma:J1}. Here we omit it.
\end{proof}

The next lemma is the coercivity of $H_K$. In the multi-bubble case, it is important to derive the following localized version of the coercivity estimate in the construction of multi-solitary wave solutions and its stability.
\begin{lemma}\label{lemma:coer:m2}
There exists $\lambda_k>0$ such that
\begin{align*}
H_K(\epsilon(t),\epsilon(t))\geq\lambda_k\|\epsilon(t)\|_{H^{\frac{1}{2}}}^2.
\end{align*}
\end{lemma}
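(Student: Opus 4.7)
The plan is to reduce the multi-soliton coercivity of $H_K$ to the single-soliton coercivity of Lemma~\ref{lemma:linear:operator} by localizing the quadratic form around each soliton center via a smooth partition of unity, while controlling the nonlocal coupling through the Calder\'on commutator estimate of Lemma~\ref{lemma:commu:1}. First I would fix smooth cutoffs $\psi_1,\psi_2:\mathbb{R}\to[0,1]$ with $\psi_1^2+\psi_2^2\equiv 1$, $\psi_k\equiv 1$ on a neighborhood of $x_k$ of radius $\sim|x_1-x_2|/4$, and $\|\psi_k'\|_{L^\infty}=O(\sigma^{-1})$. Writing $\epsilon_k=\psi_k\epsilon$ and using $\sum_k\psi_k^2=1$ together with the self-adjointness of $D$, one gets
\[\int|D^{1/2}\epsilon|^2=\sum_{k=1}^2\int|D^{1/2}\epsilon_k|^2+\sum_{k=1}^2\int([D,\psi_k]\epsilon)\,\psi_k\bar\epsilon\,dx,\]
and the Calder\'on bound $\|[D,\psi_k]\epsilon\|_{L^2}\lesssim\sigma^{-1}\|\epsilon\|_{L^2}$ makes the commutator remainder $O(\sigma^{-1}\|\epsilon\|_{L^2}^2)$.

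Next, the pointwise decay \eqref{decay} of $R_k$ lets me replace each potential integral by its localized version, $\int|R_k|^{p-1}|\epsilon|^2=\int|R_k|^{p-1}|\epsilon_k|^2+O(\langle\sigma\rangle^{-2}\|\epsilon\|_{L^2}^2)$, and analogously for the $|R_k|^{p-3}(\Re(R_k\epsilon))^2$ term. Since $\Phi_1\equiv 1$, the mass contribution $\omega_1(0)\|\epsilon\|_{L^2}^2$ splits trivially into $\sum_k\omega_1(0)\|\epsilon_k\|_{L^2}^2$, and the non-negative term $\frac{\omega_2(0)}{2}\int|\epsilon|^2\Phi_2$ is retained for later use. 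Because $R_k$ decays like $\langle x-x_k\rangle^{-2}$ and $\psi_k\equiv 1$ near $x_k$, the orthogonality conditions \eqref{decom:12} transfer approximately, $(\epsilon_k,R_k)=(\epsilon,(\psi_k-1)R_k)=O(\langle\sigma\rangle^{-2}\|\epsilon\|_{L^2})$, with analogous estimates for $iR_k$ and $\partial_xR_k$. Subtracting the $O(\langle\sigma\rangle^{-2}\|\epsilon\|_{H^{1/2}})$ projection of $\epsilon_k$ onto $\mathrm{span}\{R_k,iR_k,\partial_xR_k\}$ produces $\tilde\epsilon_k$ satisfying the exact orthogonality conditions of Lemma~\ref{lemma:linear:operator}.

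Applying Lemma~\ref{lemma:linear:operator} to $\tilde\epsilon_k$ then yields $(L_{\omega_k(0)}^+\Re\tilde\epsilon_k,\Re\tilde\epsilon_k)+(L_{\omega_k(0)}^-\Im\tilde\epsilon_k,\Im\tilde\epsilon_k)\geq\lambda\|\tilde\epsilon_k\|_{H^{1/2}}^2$ for some $\lambda>0$. Summing over $k$ and reinserting the small remainders gives
\[H_K(\epsilon,\epsilon)\geq\lambda\|\epsilon\|_{H^{1/2}}^2-C(\sigma^{-1}+\langle\sigma\rangle^{-2})\|\epsilon\|_{H^{1/2}}^2,\]
and taking $\sigma\geq\sigma_0$ sufficiently large absorbs the error into half the leading constant, proving the lemma.

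The main obstacle will be matching the mass coefficient near $x_2$: since $\Phi_1\equiv 1$ while $\Phi_2$ vanishes on a neighborhood of $x_2$, the quadratic form $H_K$ delivers the weight $\omega_1(0)$ near $x_2$, whereas the natural linearized operator there is $L_{\omega_2(0)}^+$. I would handle this by writing $D+\omega_1(0)-pQ_{\omega_2}^{p-1}=L_{\omega_2(0)}^++(\omega_1(0)-\omega_2(0))$ and compensating the shift using the non-negative contribution $\frac{\omega_2(0)}{2}\int|\epsilon|^2\Phi_2$ together with a refinement of the partition weights at the interface; the resulting coercivity constant then depends on $k$ through $\omega_k(0)$, consistent with the $k$-dependence of $\lambda_k$ in the statement.
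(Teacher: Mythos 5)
Your localization strategy is genuinely different from the paper's and, where it applies, cleaner. The paper does not use an IMS-type partition of unity with the Calder\'on commutator for $D$; instead it conjugates by a polynomially decaying weight, setting $\tilde v=v\phi_R^{1/2}$ with $\phi_R(x)=\phi(x/R)$, $\phi\sim|x|^{-a}$ at infinity, and controls the resulting commutator $h=\bigl(D^{1/2}(\tilde v\phi_R^{-1/2})-D^{1/2}\tilde v\,\phi_R^{-1/2}\bigr)\phi_R^{1/2}$ in $L^2$ by hand, via the singular-integral representation of Lemma \ref{lem-formula} and a four-region splitting; the slow decay $|x|^{-a}$, $a<1$, is what makes $\|h\|_{L^2}\leq C(R)\|\tilde v\|_{L^2}$ with $C(R)\to0$ possible. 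Your identity $\int|D^{1/2}\epsilon|^2=\sum_k\|D^{1/2}(\psi_k\epsilon)\|_{L^2}^2+O(\sigma^{-1}\|\epsilon\|_{L^2}^2)$ via Lemma \ref{lemma:commu:1}, the transfer of the orthogonality relations up to small errors, and the application of Lemma \ref{lemma:linear:operator} are all sound (note only that localizing the potential costs $O(\sigma^{-2(p-1)})$ rather than $O(\sigma^{-2})$, which is still $o(1)$ for $1<p<3$).

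The genuine gap is exactly the obstacle you flag at the end, and your proposed repair does not work. Near $x_2$ the form you must make coercive is $\langle(D+\omega_1(0)-pQ_{\omega_2}^{p-1})\cdot,\cdot\rangle$ (plus the $L^-$ part): the frequency is shifted by $\omega_1(0)-\omega_2(0)$, and if $\omega_2(0)-\omega_1(0)\geq\lambda_+$ the shifted form can be negative on the admissible subspace; nothing in the hypotheses orders $\omega_1^0$ and $\omega_2^0$. The reserve term $\frac{\omega_2(0)}{2}\int|\epsilon|^2\Phi_2$ cannot compensate, because $\Phi_2$ vanishes identically on $\{x\leq x_2+(t+\sigma)^2\}$ and hence on all of $\mathrm{supp}\,\psi_2$, whose diameter is $O(\sigma)\ll(t+\sigma)^2$; it contributes no mass where the deficit lives. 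Likewise, ``refining the partition weights at the interface'' is beside the point, since the problem sits at the center $x_2$, not at the interface between $\psi_1$ and $\psi_2$. Closing this requires either an additional hypothesis (e.g.\ $|\omega_1^0-\omega_2^0|$ small relative to $\lambda_+$) or a different functional $\mathcal{J}$ whose total weight near $x_2$ equals $\omega_2(0)$, such as $\omega_1(0)\int|u|^2+(\omega_2(0)-\omega_1(0))\int|u|^2\Phi_2$ with $\Phi_2$ transitioning between $x_1$ and $x_2$. For what it is worth, the paper's own proof meets the same difficulty and resolves it only by asserting $\Phi_2-\phi_R(\cdot-x_2)\geq-\sigma^{-a}$, which fails near $x_2$ where $\Phi_2=0$ and $\phi_R(\cdot-x_2)=1$; so you have correctly isolated the weak point, but you have not repaired it.
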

\begin{proof}
As in the proof of Lemma \ref{lemma:coer:2}, the proof of Lemma \ref{lemma:coer:m2} is based on Lemma \ref{lemma:linear:operator}. It also requires localization arguments. First, we give a localized version of Lemma \ref{lemma:coer:2}. Let $0<a<1$ and $\phi:\mathbb{R}\to\mathbb{R}$ be a $C^2$-even function such that $\phi^{\prime}\leq0$ with
\begin{align*}
    \phi(x)=1~~\text{on}~~[0,1],~~\phi(x)=|x|^{-a} ~~\text{on}~~[2,+\infty),~~\text{and}~~0<\phi<1.
\end{align*}
Let $R>0$ and $\phi_R(x)=\phi(x/R)$. Set
\begin{align*}
    H_{\phi_R}(v,v)=&\frac{1}{2}\int \phi_{R}(\cdot-x_0)\left\{\int|D^{\frac{1}{2}}v|^2+\omega_0\int|v|^2\right\}\\
    &-\int \left(\frac{1}{2}|Q_{\omega_0}(\cdot-x_0)|^{p-1}|v|^2+\frac{p-1}{2}|Q_{\omega_0}(\cdot-x_0)|^{p-3}(\Re(Q_{\omega_0}(\cdot-x_0)e^{-i\gamma_0}v))^2\right).
\end{align*}
{\bf Claim:} Let $\omega_0>0$, $x_0, \gamma_0\in\mathbb{R}$. Assume that there exists a solution $Q_{\omega_0}$ of \eqref{equ-elliptic}. There exists $R_0>2$ such that for all $R>R_0$, if $v\in H^{\frac{1}{2}}(\mathbb{R})$ satisfies
\begin{align*}
    \Re\int Q_{\omega_0}(\cdot-x_0)e^{-i\gamma_0}v=\Re\int Q_{\omega_0}^{\prime}(\cdot-x_0)e^{-i\gamma_0}v=\Im\int Q_{\omega_0}(\cdot-x_0)e^{-i\gamma_0}v=0.
\end{align*}
Then
\begin{align*}
    H_{\phi_R}(v,v)\geq C\int \phi_R(\cdot-x_0)\left(\left|D^{\frac{1}{2}}v\right|^2+|v|^2\right).
\end{align*}
Now we prove this {\bf Claim}. For the sake of simplicity, we assume that $x_0=0$ and $\gamma_0=0$. Set
\[ \tilde{v}:=v\phi_R^{\frac12}~~\text{and}~~\tilde{v}=\tilde{v}_1+i\tilde{v}_2.
\]
Then, we have
\begin{align} \label{g}
D^\frac{1}{2} v\phi_R^\frac{1}{2}=D^\frac{1}{2}\tilde{v}+\left(D^\frac{1}{2}(\tilde{v}\phi_R^{-{\frac{1}{2}}})-D^{\frac{1}{2}} \tilde{v}\phi_R^{-{\frac{1}{2}}}\right)\phi_R^{\frac{1}{2}}=:D^{\frac{1}{2}}\tilde{v}+h,
\end{align}
where
\begin{align}\label{def:h}
    h:=\left(D^\frac{1}{2}(\tilde{v}\phi_R^{-{\frac{1}{2}}})-D^{\frac{1}{2}} \tilde{v}\phi_R^{-{\frac{1}{2}}}\right)\phi_R^{\frac{1}{2}}.
\end{align}
It follows that
\begin{align}\label{m}
&\int (|D^{\frac{1}{2}} v|^2 +|v|^2)\phi_R -pQ_{\omega_0}^{p-1}v_1^2-Q_{\omega_0}^{2}v_2^2dx \notag\\
=&\int|D^{\frac{1}{2}} \tilde {v}|^2+|\tilde {v}|^2-pQ_{\omega_0}^{p-1} \tilde {v}_1^2-Q_{\omega_0}^{p-1} \tilde {v}_2^2dx\notag\\
&+ \int(1-\phi^{-1}_R)(pQ_{\omega_0}^{p-1}\tilde{v}_1^2+Q_{\omega_0}^{p-1}\tilde{v}_2^2)dx
 +\|h\|_{L^2}^2
+2{\Re} \langle D^{\frac{1}{2}}\tilde{v}, h\rangle  \nonumber \\
=&: K_1+K_2+K_3+K_4.
\end{align}
In the sequel,
let us estimate each term on the right-hand side of \eqref{m}.

{\bf $(i)$ Estimate of $K_1$.} We claim that there exists $C(R)>0$ with $\lim_{R\rightarrow+\infty}C(R)=0$ such that
\begin{align}\label{est-scal}
\left|\int Q_{\omega_0}\tilde{v}\right|+\left|\int Q_{\omega_0}^{\prime}\tilde{v}\right|\leq C(R)\|\tilde{v}\|_{L^2}^2. 
\end{align}
Thus, along with Lemma \ref{lemma:coer:2} and \eqref{est-scal} yields that there exists $C>0$ such that for $R$ sufficiently large,
\begin{align}\label{k1}
K_1=\int|D^{\frac{1}{2}} \tilde {v}|^2+|\tilde {v}|^2-pQ_{\omega_0}^{p-1} \tilde {v}_1^2-Q_{\omega_0}^{p-1} \tilde {v}_2^2dx\geq C\|\tilde{v}\|^2_{H^{\frac{1}{2}}}.
\end{align}
In order to prove \eqref{est-scal}, we rewrite
\begin{align*}
\int \tilde{v}_1 Q_{\omega_0}=\int v_1Q_{\omega_0}+\int \tilde{v}_1Q_{\omega_0}(\phi_R^{\frac{1}{2}}-1)\phi_R^{-{\frac{1}{2}}}dx.
\end{align*}
Notice that
\begin{align*}
\left|\int \tilde{v}_1Q_{\omega_0}\left(\phi_R^{\frac{1}{2}}-1\right)\phi_R^{-{\frac{1}{2}}}dx\right|\leq C\int_{|x|\geq R} \left|\tilde{v}_1Q_{\omega_0}\phi_R^{-{\frac{1}{2}}}\right|dx
\leq C\|\tilde{v}\|_{L^2}\left(\int_{|x|\geq R}Q_{\omega_0}^2\phi_R^{-1}dx\right)^{\frac{1}{2}}.
\end{align*}
By the decay property that $Q_{\omega_0}(y)\sim\langle y\rangle^{-2}$ (see \eqref{decay}),
\begin{align*}
\int_{|x|\geq R}Q_{\omega_0}^2\phi_R^{-1}dx=\frac{1}{R}\int_{|y|\geq 1}Q_{\omega_0}^2(Ry)\phi^{-1}(y)dy
\leq C\frac{1}{R^5}\int_{|y|\geq 1}|y|^{a-4}dy\leq \frac{C}{R^5}.
\end{align*}
Thus, we get
\begin{align*}
\left|\int \tilde{v}_1Q_{\omega_0}-\int v_1 Q_{\omega_0}\right|\leq CR^{-\frac{5}{2}}\|\tilde{v}\|_{L^2}.
\end{align*}
Similar arguments also apply to the term $\int Q_{\omega_0}^{\prime}\Tilde{v}$,
so we obtain \eqref{est-scal}, as claimed.

{\bf $(ii)$ Estimate of $K_2$.}
Using the decay property of $Q_{\omega_0}$ (see \eqref{decay}) again, we see that
\begin{align}
    \label{k2}
|K_2|=&\left|\int(1-\phi^{-1}_R)(pQ_{\omega_0}^{p-1}\tilde{v}_1^2+Q_{\omega_0}^{p-1}\tilde{v}_2^2)dx\right|\notag\\
\leq& C\int_{|x|\geq R}\phi^{-1}_RQ_{\omega_0}^{p-1}|\tilde{v}|^2dx\leq C\|\phi^{-1}_RQ_{\omega_0}^{p-1}\|_{L^\infty(|x|\geq R)}\|\tilde{v}\|_{L^2}^2\leq R^{-2(p-1)}\|\tilde{v}\|_{L^2}^2.
\end{align}

{\bf $(iii)$ Estimate of $K_3$ and $K_4$.} We claim that there exists $C(R)>0$ with $\lim_{A\rightarrow+\infty}C(R)=0$ such that
\begin{align}\label{est-g}
\|h\|_{L^2}\leq C(R)\|\tilde{v}\|_{L^2},
\end{align}
where $h$ is given by \eqref{def:h}.

This yields that
\begin{align}\label{k34}
|K_3|+|K_4|\leq C(R)\|\tilde{v}\|^2_{H^{\frac 12}}.
\end{align}

In order to prove \eqref{est-g},
by Lemma \ref{lem-formula}(see \eqref{id-fra2}) and the Minkowski inequality,
\begin{align*}
\|h\|_{L^2}&=C\left(\int \phi_R(x)\left|\int \frac{\tilde{v}(x+y)\left(\phi_R^{-\frac12}(x+y)-\phi_R^{-\frac12}(x)\right)
-\tilde{v}(x-y)\left(\phi_R^{-\frac12}(x)-\phi_R^{-\frac12}(x-y)\right)}
{|y|^{\frac{3}{2}}}dy\right|^2dx\right)
^{{\frac{1}{2}}}\\
& \leq C\int|y|^{-\frac32}\left(\int \phi_R(x)\left(\tilde{v}(x+y)\left(\phi_R^{-\frac12}(x+y)-\phi_R^{-\frac12}(x)\right)\right)^2dx\right)
^{\frac12}dy\\
& \leq C\sum_{j=1}^{4}\int_{\Omega_{j,1}}|y|^{-\frac32}
\left(\int_{\Omega_{j,2}} \phi_R(x)\left(\tilde{v}(x+y)\left(\phi_R^{-\frac12}(x+y)-\phi_R^{-\frac12}(x)\right)\right)^2dx\right)
^{\frac12}dy\\
&=:H_1+H_2+H_3+H_4,
\end{align*}
where $\Omega_1: =\Omega_{1,1} \cup \Omega_{1,2}=\left\{|x|\leq \frac{R}{2}, |y|\leq \frac{R}{4}\right\}$.
For the sake of simplicity,
we write $\Omega_j := \Omega_{j,1} \cup \Omega_{j,2}$ for the remaining three regimes, $2\leq j\leq 4$,
and take
$\Omega_2:=\left\{|x|\geq \frac{R}{2}, |y|\leq \frac{R}{4}\right\}$, $\Omega_3:=\left\{|x|\leq \frac{|y|}{2}, |y|\geq \frac{R}{4}\right\}$ and $\Omega_4:=\left\{|x|\geq \frac{|y|}{2}, |y|\geq \frac{R}{4}\right\}$.
Then, the proof of \eqref{est-g} is reduced to estimating $H_k$, $1\leq k\leq 4$.

First notice that $\phi_R^{-\frac12}(x+y)-\phi_R^{-\frac12}(x)=0$ for $(x, y)\in \Omega_1$,
and so  $H_1=0$.

For $H_2$. Using the mean valued theorem, we get that for some $0\leq \theta\leq 1$,
\begin{align*}
\phi_R(x)\left|\phi_R^{-\frac12}(x+y)-\phi_R^{-\frac12}(x)\right|^2=\frac{1}{4}\frac{\phi_R(x)}{\phi^3_R(x+\theta y)}|\phi^\prime_R(x+\theta y)|^2|y|^2.
\end{align*}
Since  $\frac{R}{2}\leq |x|\leq 3R$ and $|y|\leq \frac{R}{4}$, we have $\frac{R}{4}\leq |x+\theta  y|\leq 4R$,
and thus
\begin{align*}
\phi_R(x)\left|\phi_R^{-\frac12}(x+y)-\phi_R^{-\frac12}(x)\right|^2=\frac{1}{4R^2}\frac{\phi_R(x)}{\phi^3_R(x+\theta y)}\left|\phi^\prime\left(\frac{x+\theta y}{R}\right)\right|^2|y|^2\leq \frac{C}{R^2}|y|^2.
\end{align*}
While for $|x|\geq 3R$ and $|y|\leq \frac{R}{4}$,
we have $\frac{|x|}{2}\leq |x+\theta y|\leq \frac{3|x|}{2}$,
and thus
\begin{align*}
\phi_R(x)\left|\phi_R^{-\frac12}(x+y)-\phi_R^{-\frac12}(x)\right|^2
\leq C\frac{\phi_R(\frac x2)}{\phi^3_A(\frac {3x}{2})}|\phi^\prime_R(x+\theta y)|^2|y|^2
\leq \frac{C}{R^2}|y|^2.
\end{align*}
Hence, we obtain
\begin{align}
    \label{g2}
H_2\leq \frac{C}{R}\int_{0}^{\frac{R}{4}}y^{-{\frac{1}{2}}}dy\|\tilde{v}\|_{L^2}\leq CR^{-{\frac{1}{2}}}\|\tilde{v}\|_{L^2}.
\end{align}
For $H_3$, we shall use the fact that
\begin{align}
    \label{1}
\phi_R(x)\left|\phi_R^{-\frac12}(x+y)-\phi_R^{-\frac12}(x)\right|^2\leq C\phi_R(x)\left(\frac{1}{\phi_R(x+y)}+\frac{1}{\phi_R(x)}\right).
\end{align}

For $(x, y)\in \Omega_3$,
we have $\frac{|y|}{2}\leq |x+y|\leq \frac{3|y|}{2}$, so
\begin{align*}
\phi_R(x)\phi_R^{-1}(x+y)\leq C\phi_R^{-1}\left(\frac{3|y|}{2}\right)\leq C|y|^a,
\end{align*}
which implies
\begin{align*}
\phi_R(x)\left|\phi_R^{-\frac12}(x+y)-\phi_R^{-\frac12}(x)\right|^2\leq C(1+|y|^a),
\end{align*}
and thus
\begin{align}
    \label{g3}
H_3\leq C\int_{\frac{R}{4}}^{+\infty}y^{-\frac{3}{2}}\left(1+y^{\frac{a}{2}}\right)dy\|\tilde{v}\|_{L^2}
\leq CR^{\frac{a-1}{2}}\|\tilde{v}\|_{L^2}.
\end{align}
The estimate of $H_4$ also relies on \eqref{1}. In fact, for $(x, y)\in \Omega_4$, we have $|x+y|\leq 3|x|$,
and thus
\begin{align*}
\phi_R(x)\phi_R^{-1}(x+y)\leq C\phi_R(x)\phi_R^{-1}(3x)\leq C,
\end{align*}
and, by \eqref{1},
\begin{align*}
\phi_R(x)\left|\phi_R^{-\frac12}(x+y)-\phi_R^{-\frac12}(x)\right|^2\leq C.
\end{align*}
It follows that
\begin{align}\label{g4}
H_4\leq C\int_{\frac{R}{4}}^{+\infty}y^{-\frac{3}{2}}dy\|\tilde{v}\|_{L^2}\leq CR^{-{\frac{1}{2}}}\|\tilde{v}\|_{L^2}.
\end{align}
Thus, combining \eqref{g2}, \eqref{g3} and \eqref{g4} together, we obtain \eqref{est-g}, as claimed.

Now, plugging \eqref{k1}, \eqref{k2} and \eqref{k34} into \eqref{m}
we get that for $R$ large enough,
\begin{align*}
\int (|D^{\frac{1}{2}} v|^2 +|v|^2)\phi_R -pQ^{p-1}v_1^2-Q^{p-1}v_2^2dx
\geq C_1\|\tilde{v}\|_{H^{\frac{1}{2}}}^2.
\end{align*}
Hence, in order to finish the proof,
it remains to show that there exists $C>0$ such that for $R$ sufficiently large
\begin{align}\label{wtf-H12-esti}
\|\tilde{v}\|_{H^{\frac{1}{2}}}^2\geq C\int\left(|D^{\frac{1}{2}} v|^2+|v|^2\right)\phi_Rdx.
\end{align}
Since $\|\tilde{v}\|^2_{L^2}=\int|v|^2\phi_Rdx$ and
\begin{align*}
\|D^{\frac{1}{2}} \tilde{v}\|^2_{L^2}&=\int\left|D^{\frac{1}{2}} v\phi_R^{\frac{1}{2}}+\left(D^{\frac{1}{2}}\tilde{v}-D^{\frac{1}{2}}\left(\tilde{v}\phi_R^{-{\frac{1}{2}}}\right)\phi_R^{\frac{1}{2}}\right)\right|^2dx
=\int\left|D^{\frac{1}{2}} v\phi_R^{\frac{1}{2}}-h\right|^2dx\\
&=\int\left|D^{\frac{1}{2}} v\right|^2\phi_Rdx+\int|h|^2dx-2{\Re}\int D^{\frac{1}{2}} v\phi_R^{\frac{1}{2}}\bar{h}dx,
\end{align*}
where $h$ is given by \eqref{g}. Applying \eqref{est-g} to derive that for $R$ large enough
\begin{align*}
\|D^{\frac{1}{2}} \tilde{v}\|^2_{L^2}+\|\tilde{ v}\|^2_{L^2}\geq C\int\left(|D^{\frac{1}{2}} v|^2+|v|^2\right)\phi_Rdx.
\end{align*}
This yields \eqref{wtf-H12-esti} and finishes the proof of this Claim.

Now, we finish the proof of Lemma \ref{lemma:coer:m2}.  Let $R>R_0$ and $\sigma>0$. By the definition of $\Phi_k$, $k=1,2$, we decompose $H_K(\epsilon,\epsilon)$ as follows,
\begin{align*}
    H_K(\epsilon,\epsilon)=&\frac{1}{2}\sum_{k=1}^2\int\phi_R(\cdot-x_k)\left[|D^{\frac{1}{2}}\epsilon|^2+\omega_k(0)|\epsilon|^2\right]\\
    &-\frac{1}{2}\sum_{k=1}^2\int \left(|R_k|^{p-1}|v|^2+(p-1)|R_k|^{p-3}(\Re(\Bar{R}_kv))^2\right)\\
    &+\frac{1}{2}\sum_{k=1}^2\int(\Phi_k-\phi_R(\cdot-x_k))\omega_k(0)|\epsilon|^2+\frac{1}{2}\sum_{k=1}^2\int(1-\phi(\cdot-x_k))|D^{\frac{1}{2}}\epsilon|^2.
\end{align*}
From the claim, for any $k=1,2$, we have, for $R$ large enough,
\begin{align*}
    &\int\phi_R(\cdot-x_k)\left[|D^{\frac{1}{2}}\epsilon|^2+\omega_k(0)|\epsilon|^2\right]-\int \left(|R_k|^{p-1}|v|^2+(p-1)|R_k|^{p-3}(\Re(\Bar{R}_kv))^2\right)\\
    \geq& C_k\int \phi_R(\cdot-x_k(t))\left[|D^{\frac{1}{2}}\epsilon|^2+|\epsilon|^2\right].
\end{align*}
Moreover, by the properties of $\phi_R$ and $\Phi_k(t)$, for $\sigma$ large enough, where $\sigma$ is given by \eqref{def:sig0}, then we have
\begin{align*}
    \Phi_k-\phi_R(\cdot-x_k)\geq-|\sigma|^{-a}~~\text{and}~~1-\phi_R(\cdot-x_k)\geq-|\sigma|^{-a}.
\end{align*}
There exists  $\delta(k)=\delta(\omega_k)>0$ such that
\begin{align*}
    |D^{\frac{1}{2}}\epsilon|^2+\omega_k(0)|\epsilon|^2\geq\delta_k\left(|D^{\frac{1}{2}}\epsilon|^2+|\epsilon|^2\right)\geq0.
\end{align*}
Hence,
\begin{align*}
    &\int(\Phi_k-\phi_R(\cdot-x_k))\omega_k(0)|\epsilon|^2+\int(1-\phi(\cdot-x_k))|D^{\frac{1}{2}}\epsilon|^2\\
    \geq&\delta_k\int(\Phi_k-\phi_R(\cdot-x_k))\omega_k(0)|\epsilon|^2+\delta_k\int(1-\phi(\cdot-x_k))|D^{\frac{1}{2}}\epsilon|^2-|\sigma|^{-a}\int\left(|D^{\frac{1}{2}}\epsilon|^2+|\epsilon|^2\right).
\end{align*}
Combining the above estimates, we get
\begin{align*}
    H_K(\epsilon,\epsilon)\geq\lambda_k\int\sum_{k=1}^2\Phi_k\left(|D^{\frac{1}{2}}\epsilon|^2+|\epsilon|^2\right)-|\sigma|^{-a}\int\left(|D^{\frac{1}{2}}\epsilon|^2+|\epsilon|^2\right),
\end{align*}
where $\lambda_k=\min\{C_k,\delta_k\}$. By the definition of  $\Phi_k$, we obtain the desired result by taking $\sigma$ large enough. Now we complete the proof of Lemma \ref{lemma:coer:m2}.
\end{proof}

As in the proof of a single solitary waves stability result in section \ref{sec:s}, we now proceed in last two steps: first, we control the size of $\epsilon(t)$ in $H^\frac{1}{2}$, and second, we check that for any two, $|\omega_k(t)-\omega_k(0)|$ is quadratic in $|\epsilon(t)|$.

{\bf Step 3. Energetic control of $\|\epsilon(t)\|_{H^\frac{1}{2}}$.}  We give the following lemma:.
\begin{lemma}\label{lemma:energy}
Assume that $\sigma$ is given by \eqref{def:sig0}. For all $t\in[0,t^*]$, the following holds:
\begin{align}\label{est:small:s}
    \|\epsilon(t)\|_{H^\frac{1}{2}}^2 +|\mathcal{J}_2(t)-\mathcal{J}_2(0)|\leq&\frac{C}{\sigma}\sup_{t'\in[0,t]}\|\epsilon(t')\|_{L^2}^2+C\|\epsilon(0)\|_{H^\frac{1}{2}}^2\notag\\
    &+C\sum_{k=1}^2|\omega_k(t)-\omega_k(0)|+\frac{C}{\sigma}.
\end{align}
\end{lemma}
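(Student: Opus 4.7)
The plan is to mimic the single-solitary-wave argument from Section \ref{sec:s}, but with the conserved functional $G_{\omega(0)}(u)$ replaced by the quasi-conserved functional $G(t) = E(u(t)) + \tfrac{1}{2}\mathcal{J}(t)$ defined in \eqref{def:G}. The three ingredients are: (i) the expansion \eqref{expan:2} of $G(u(t))$ around $\sum_k Q_{\omega_k(0)}$, (ii) the localized coercivity Lemma \ref{lemma:coer:m2}, and (iii) the one-sided mass monotonicity of Lemma \ref{lemma:mass:loc}. Unlike the single-soliton case, $G(t)$ is \emph{not} conserved because of the time-dependent cutoff $\Phi_2$; this defect is precisely what Lemma \ref{lemma:mass:loc} quantifies, and it is the source of the $\sigma^{-1}$-loss in the final estimate.

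First, I would observe that $\Phi_1\equiv 1$ makes $\mathcal{J}_1(t)=\omega_1(0)\|u(t)\|_{L^2}^2$ a conserved quantity, and combined with conservation of $E(u(t))$ this yields
\begin{align*}
G(u(t)) - G(u(0)) = \tfrac{1}{2}\bigl(\mathcal{J}_2(t) - \mathcal{J}_2(0)\bigr).
\end{align*}
Next I would apply \eqref{expan:2} at both times $t$ and $0$ and take the difference. At $t=0$ the $|\omega_k(0)-\omega_k(0)|^2$ terms vanish and each quadratic piece of $H_K$ is trivially bounded by $C\|\epsilon(0)\|_{H^{1/2}}^2$, so the subtraction gives
\begin{align*}
H_K(\epsilon(t),\epsilon(t)) \leq \tfrac{1}{2}\bigl(\mathcal{J}_2(t)-\mathcal{J}_2(0)\bigr) + C\|\epsilon(0)\|_{H^{1/2}}^2 + \|\epsilon(t)\|_{H^{1/2}}^2\beta\bigl(\|\epsilon(t)\|_{H^{1/2}}\bigr) + C\sum_{k=1}^2|\omega_k(t)-\omega_k(0)|^2 + C\langle\sigma\rangle^{-2}.
\end{align*}

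Then I would invoke Lemma \ref{lemma:coer:m2} to bound the left-hand side from below by $\lambda\|\epsilon(t)\|_{H^{1/2}}^2$, absorb the $\beta$-error into the left-hand side using the smallness of $\|\epsilon(t)\|_{H^{1/2}}$ guaranteed by \eqref{decom:small} (for $\alpha_0$ small and $\sigma_0$ large), bound the monotonicity contribution by Lemma \ref{lemma:mass:loc}, and upgrade $|\omega_k(t)-\omega_k(0)|^2 \leq C|\omega_k(t)-\omega_k(0)|$ since the modulation variation is small. This produces the bound for $\|\epsilon(t)\|_{H^{1/2}}^2$ alone with the required right-hand side. Finally, adding the nonnegative quantity $|\mathcal{J}_2(t)-\mathcal{J}_2(0)|$ to the left-hand side and re-using Lemma \ref{lemma:mass:loc} (which already controls it by exactly the same type of right-hand side) gives \eqref{est:small:s}.

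There is no single hard step: the genuine analytical work — the nonlocal commutator and pointwise-decay estimates for the localized mass, and the localized coercivity with the Calder\'on-type correction — has already been done in Lemmas \ref{lemma:mass:loc} and \ref{lemma:coer:m2}. The only point that merits care in the write-up is checking that the one-sided nature of the monotonicity from Lemma \ref{lemma:mass:loc} is compatible with the direction of the coercivity inequality: one uses the upper bound $\mathcal{J}_2(t)-\mathcal{J}_2(0)\leq C\sigma^{-1}(\sup\|\epsilon\|_{L^2}^2+1)$ when extracting $\|\epsilon(t)\|_{H^{1/2}}^2$, and the absolute value $|\mathcal{J}_2(t)-\mathcal{J}_2(0)|$ only when adding the separate term to the left-hand side.
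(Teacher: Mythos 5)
Your proposal is correct and follows essentially the same route as the paper: expand $G$ via \eqref{expan:2} at times $t$ and $0$, use conservation of $E$ and of $\mathcal{J}_1$ (since $\Phi_1\equiv1$), apply the coercivity of Lemma \ref{lemma:coer:m2} and the monotonicity of Lemma \ref{lemma:mass:loc}, and absorb the $\beta$-error. The one small point to fix in the write-up is that Lemma \ref{lemma:mass:loc} gives only the upper bound on $\mathcal{J}_2(t)-\mathcal{J}_2(0)$; the matching lower bound needed for $|\mathcal{J}_2(t)-\mathcal{J}_2(0)|$ comes from rearranging the coercivity inequality $\lambda_K\|\epsilon(t)\|_{H^{1/2}}^2\leq(\mathcal{J}_2(t)-\mathcal{J}_2(0))+\cdots$ using the nonnegativity of its left-hand side, exactly as the paper does.
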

\begin{proof}
First, we write \eqref{expan:2} at $t>0$ and at $t=0$:
\begin{align*}
    E(u(t))+\mathcal{J}(t)=&E(Q_{\omega_k(0)})+\sum_{k=1}^2\mathcal{J}_{\omega_k(0)}(Q_{\omega_k(0)})+H_K(\epsilon(t),\epsilon(t))+\|\epsilon(t)\|_{H^\frac{1}{2}}^2\beta\left(\|\epsilon(t)\|_{H^\frac{1}{2}}\right)\notag\\
     &+\sum_{k=1}^2\mathcal{O}(|\omega_k(t)-\omega_k(0)|^2)+\mathcal{O}\left(\langle \sigma\rangle^{-2}\right),
\end{align*}
and
\begin{align*}
    &E(u(0))+\mathcal{J}(0)\\
    =&E(Q_{\omega_k(0)})+\sum_{k=1}^2\mathcal{J}_{\omega_k(0)}(Q_{\omega_k(0)})+H_K(\epsilon(0),\epsilon(0))+\|\epsilon(0)\|_{H^\frac{1}{2}}^2\beta\left(\|\epsilon(0)\|_{H^\frac{1}{2}}\right)+\mathcal{O}\left(\langle \sigma\rangle^{-2}\right).
\end{align*}
Since $E(u(t))=E(u(0))$ and $H_K(\epsilon(0),\epsilon(0))\leq C\|\epsilon(0)\|_{H^\frac{1}{2}}^2$, then from above equalities, we deduce that
\begin{align*}
    H_K(\epsilon(t),\epsilon(t))\leq&(\mathcal{J}(t)-\mathcal{J}(0))+C\|\epsilon(0)\|_{H^\frac{1}{2}}^2+C\sum_{k=1}^2(|\omega_k(t)-\omega_k(0)|^2)\\  &+C\|\epsilon(t)\|_{H^\frac{1}{2}}^2\beta\left(\|\epsilon(t)\|_{H^\frac{1}{2}}\right)+C\langle \sigma\rangle^{-2}.
\end{align*}
From the conservation of mass and Lemma \ref{lemma:coer:m2}, we obtain
\begin{align}\label{est:linear:1}
    \lambda_K\|\epsilon(t)\|_{H^\frac{1}{2}}^2\leq \left(\mathcal{J}_2(t)-\mathcal{J}_2(0)\right)+C\|\epsilon(0)\|_{H^{\frac{1}{2}}}^2+C\sum_{k=1}^2(|\omega_k(t)-\omega_k(0)|^2)+C\langle \sigma\rangle^{-2},
\end{align}
where we assume that $\alpha$ is small enough ($\alpha$ is as in \eqref{ass:sta:1}).

By Lemma \ref{lemma:mass:loc}, we have
\begin{align}\label{est:local:3}
    \mathcal{J}_2(t)-\mathcal{J}_2(0)\leq \frac{C}{\sigma}\sup_{0<\tau<t}\|\epsilon(\tau)\|_{L^2}^2+\frac{C}{\sigma}.
\end{align}
Injecting \eqref{est:local:3} into \eqref{est:linear:1}, we obtain
\begin{align*}
     \lambda_K\|\epsilon(t)\|_{H^\frac{1}{2}}^2\leq \frac{C}{\sigma}\sup_{t'\in[0,t]}\|\epsilon(t')\|_{L^2}^2+C\|\epsilon(0)\|_{H^\frac{1}{2}}^2+C\sum_{k=1}^2(|\omega_k(t)-\omega_k(0)|^2)+\frac{C}{\sigma}.
\end{align*}
Using this and \eqref{est:linear:1} again, we obtain
\begin{align*}
    |\mathcal{J}_2(t)-\mathcal{J}_2(0)|\leq &\frac{C}{\sigma}\sup_{t'\in[0,t]}\|\epsilon(t')\|_{L^2}^2+C\|\epsilon(0)\|_{H^\frac{1}{2}}^2\\
    &+C\sum_{k=1}^2(|\omega_k(t)-\omega_k(0)|^2)+\frac{C}{\sigma}.
\end{align*}
Then, from above estimates, \eqref{est:small:s} holds.
Now we have completed the proof of Lemma \ref{lemma:energy}.
\end{proof}

{\bf Step 4. Quadratic control of $|\omega_k(t)-\omega_k(0)|$.} We give the following result.

Let $\Phi^+: \R\rightarrow [0,1]$ be a smooth function  such that
$|(\Phi^+)^\prime(x)| \leq C \sigma^{-1}$ for some $C>0$,
$\Phi^+(x)=0$ for $x\leq \frac{3}{2}$
and $\Phi^+(x)=1$ for $x\geq \frac{5}{2}$.
The localization functions $\Phi^+_k$, $k=1,2$ are defined by
\begin{align*}
    \Phi^+_1(x)\equiv 1~~\text{and}~~ \Phi^+_2(x) :=\Phi^+\left(\frac{x-x_2}{(t+\sigma)^2}\right).
\end{align*}

 In addition, we define $\Phi^-: \R\rightarrow [0,1]$ as a smooth function such that
$|(\Phi^-)^\prime(x)| \leq C \sigma^{-1}$ for some $C>0$,
$\Phi^-(x)=0$ for $x\leq \frac{1}{2}$
and $\Phi^-(x)=1$ for $x\geq \frac{3}{2}$.
The localization functions $\Phi^-_k$, $k=1,2$ are defined by
\begin{align*}
   \Phi^-_1(x)\equiv 1~~\text{and}~~ \Phi^-_2(x) :=\Phi^-\left(\frac{x-x_2}{(t+\sigma)^2}\right).
\end{align*}

Define
\begin{align}\label{def:Jk+}
    \mathcal{J}_k^{\pm}(t)=\frac{1}{2}\omega_k(0)\int|u(t,x)|^2\Phi^{\pm}_k(x)dx.
\end{align}
Hence, from Lemma \ref{lemma:mass:loc}, we have the following corollary:
\begin{corollary}\label{coro:12}
    Let $\mathcal{J}_2^{\pm}$ be defined as \eqref{def:Jk+}, then we have
   \begin{align*}
\mathcal{J}_2^{\pm}(t)-\mathcal{J}_2^{\pm}(0)\leq \frac{C}{\sigma}\sup_{0<\tau<t}\|\epsilon(\tau)\|_{L^2}^2+\frac{C}{\sigma}.
\end{align*}
\end{corollary}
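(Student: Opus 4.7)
The plan is to run the same argument as in Lemma \ref{lemma:mass:loc}, now with the cut-offs $\Phi^{\pm}_2$ in place of $\Phi_2$. The key observation is that $\Phi^{\pm}_2$ satisfies exactly the same structural properties that were used: they are smooth functions of $(x-x_2)/(t+\sigma)^2$, valued in $[0,1]$, with $|(\Phi^{\pm}_2)^{\prime}|\leq C(t+\sigma)^{-2}$ and $|(\Phi^{\pm}_2)^{\prime\prime}|\leq C(t+\sigma)^{-4}$, equal to $0$ on a half-line and $1$ on another half-line, with the transition region of length comparable to $(t+\sigma)^2$ and located a distance $\gtrsim (t+\sigma)^2$ from the position $x_2$. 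Only the numerical thresholds $(1/2,3/2)$ or $(3/2,5/2)$ change, which does not affect any of the bounds.

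Concretely, I would differentiate $\mathcal{J}^{\pm}_2(t)$ using \eqref{equ-hf} and the decomposition $u=R+\epsilon$, obtaining the same four-term splitting
\[
\tfrac{d}{dt}\mathcal{J}^{\pm}_2(t)=I^{\pm}+II^{\pm}+III^{\pm}+IV^{\pm},
\]
where $I^{\pm}=2\omega_1(0)\Im\int \bar R\, DR\,\Phi^{\pm}_2$, $II^{\pm}=2\omega_1(0)\Im\int(\bar R D\epsilon+\bar\epsilon DR)\Phi^{\pm}_2$, $III^{\pm}=2\omega_1(0)\Im\int\bar\epsilon D\epsilon\,\Phi^{\pm}_2$ and $IV^{\pm}=2\omega_1(0)\int|u|^2(\Phi^{\pm}_2)^{\prime}$. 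For $I^{\pm}$ I would again expand $R=R_1+R_2$, use the non-local commutator identity \eqref{id-fra2}, partition $\mathbb{R}^2$ into the same four regimes
\[
\Omega_{1,1},\ \Omega_{1,2},\ \Omega_{1,3},\ \Omega_{1,4}
\]
associated to the sizes of $|x-x_2|/(t+\sigma)^2$ and $|y|/(t+\sigma)^2$, and invoke the Taylor estimate \eqref{Taylor} together with the pointwise decay \eqref{decay} of $Q_{\omega_k}$. For $II^{\pm}$ the same identity \eqref{id-fra2} combined with the regimes $\Omega_{2,j}$ and the $L^2$ bound on $\epsilon$ gives the desired $(t+\sigma)^{-2}\|\epsilon\|_{L^2}$ control. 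For $III^{\pm}$ the Calderón commutator bound from Lemma \ref{lemma:commu:1} yields $|III^{\pm}|\leq C(t+\sigma)^{-2}\|\epsilon\|_{L^2}^2$. For $IV^{\pm}$ the uniform $L^2$-bound on $u$, which follows from conservation of mass together with \eqref{decom:small}, yields the $(t+\sigma)^{-2}$ decay.

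Combining these bounds,
\[
\Bigl|\tfrac{d}{dt}\mathcal{J}^{\pm}_2(t)\Bigr|\leq C(t+\sigma)^{-2}\bigl(1+\|\epsilon\|_{L^2}+\|\epsilon\|_{L^2}^2\bigr),
\]
and integrating in time on $[0,t]$ produces the stated bound with the factor $C/\sigma$ coming from $\int_0^\infty (t+\sigma)^{-2}\,dt=1/\sigma$.

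There is no real obstacle: the result is exactly a corollary in the sense that replacing $\Phi_2$ by $\Phi^{\pm}_2$ does not affect any of the quantitative bounds. The only minor bookkeeping issue is the relabelling of the thresholds defining the regions $\Omega_{j,k}$; choosing the constants $\{1/2,3/2\}$ or $\{3/2,5/2\}$ instead of $\{1,3\}$ changes only absolute constants and not the structure of the estimates. For this reason I would, in the writeup, simply refer the reader to the proof of Lemma \ref{lemma:mass:loc} and indicate that the argument applies verbatim.
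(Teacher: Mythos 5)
Your proposal is correct and follows exactly the route the paper takes: the paper simply states this corollary as an immediate consequence of Lemma \ref{lemma:mass:loc}, since replacing $\Phi_2$ by $\Phi_2^{\pm}$ only changes the numerical thresholds in the cut-off and hence only the absolute constants in the four-term estimate. Your verbatim rerun of the $I+II+III+IV$ decomposition, the commutator identity, the regime partition, and the Calder\'on bound is precisely the intended justification.
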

Now we have the following estimates:
\begin{lemma}
Let $\mathcal{J}_k$ and $\mathcal{J}_k^{\pm}$ be defined as \eqref{def:JK} and \eqref{def:Jk+}, respectively. Then we have
    \begin{align}\label{claim:1}
    \left|\mathcal{J}^{+}_k(t)-\mathcal{J}_k(t)-\frac{1}{2}\sum_{k'=k}^2\int Q_{\omega_{k'}(t)}^2\right|\leq C\|\epsilon(t)\|_{H^\frac{1}{2}}^2+\langle \sigma\rangle^{-3
    },\\\label{claim:2}
    \left|\mathcal{J}_k(t)-\mathcal{J}^-_k(t)-\frac{1}{2}\sum_{k'=k}^2\int Q_{\omega_{k'}(t)}^2\right|\leq C\|\epsilon(t)\|_{H^\frac{1}{2}}^2+\langle \sigma\rangle^{-3}.
\end{align}
\end{lemma}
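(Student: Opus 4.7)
The plan is to rewrite each side of \eqref{claim:1}--\eqref{claim:2} as a single weighted integral of $|u|^2$ and then substitute the decomposition $u=R+\epsilon=R_1+R_2+\epsilon$, handling each resulting piece separately. Introduce the auxiliary weight $\psi_k^{\pm}(x) := \frac{1}{2}\omega_k(0)\Phi_k^{\pm}(x)-\omega_k(0)\Phi_k(x)$, so that
\[
\mathcal{J}_k^{\pm}(t)-\mathcal{J}_k(t) = \int |u(t,x)|^2\,\psi_k^{\pm}(x)\,dx.
\]
From the construction of the cutoffs in Section~4, $\|\psi_k^{\pm}\|_{L^\infty}\lesssim 1$ and $\|(\psi_k^{\pm})^{\prime}\|_{L^\infty}\lesssim (t+\sigma)^{-2}$, with $\psi_k^{\pm}$ supported in the narrow transition band between the scaled step functions $\Phi_k$ and $\Phi_k^{\pm}$. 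Expanding $|u|^2=|R|^2+2\Re(R\bar\epsilon)+|\epsilon|^2$ gives three pieces to treat in order of increasing subtlety.

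The $|\epsilon|^2$-piece is bounded by $\|\psi_k^{\pm}\|_{L^\infty}\|\epsilon\|_{L^2}^2\lesssim \|\epsilon\|_{H^{1/2}}^2$. For the $|R|^2$-piece I would split $|R|^2=\sum_{k'=1}^2|R_{k'}|^2+2\Re(R_1\bar R_2)$. The cross term is bounded by the pointwise decay $|Q_{\omega_{k'}}(x-x_{k'})|\le C\langle x-x_{k'}\rangle^{-2}$ from \eqref{decay} and the separation $|x_1-x_2|\ge\sigma$, giving a remainder of order $\langle\sigma\rangle^{-3}$. For the diagonal terms, freeze the cutoff at the soliton center:
\[
\int |R_{k'}|^2\psi_k^{\pm}\,dx = \psi_k^{\pm}(x_{k'})\int Q_{\omega_{k'}(t)}^2\,dx + O\bigl(\|(\psi_k^{\pm})^{\prime}\|_{L^\infty}\,\bigl\|\,|\cdot - x_{k'}|\,Q_{\omega_{k'}}^2\,\bigr\|_{L^1}\bigr),
\]
and the remainder is $O((t+\sigma)^{-2})$; the sharper $\langle\sigma\rangle^{-3}$ is then recovered by using the parity of $Q_{\omega_{k'}}$ to kill the first-order Taylor contribution of $\psi_k^{\pm}$ around $x_{k'}$. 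By the explicit design of $\Phi,\Phi^{\pm}$ at the beginning of Section~4, the point values $\psi_k^{\pm}(x_{k'})$ are arranged so that the diagonal sum reproduces exactly the main term $\tfrac{1}{2}\sum_{k'=k}^2\int Q_{\omega_{k'}(t)}^2$.

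The cross integral $\int R\bar\epsilon\,\psi_k^{\pm}$ is where the orthogonality \eqref{decom:small} enters crucially. For each $k'$ one writes
\[
\int R_{k'}\bar\epsilon\,\psi_k^{\pm}\,dx = \psi_k^{\pm}(x_{k'})\int R_{k'}\bar\epsilon\,dx + \int R_{k'}\bar\epsilon\,\bigl[\psi_k^{\pm}-\psi_k^{\pm}(x_{k'})\bigr]\,dx.
\]
The first integral vanishes since $(R_{k'},\epsilon)_{L^2}=0$ in both real and imaginary parts. The second is controlled via Cauchy--Schwarz and the Lipschitz bound on $\psi_k^{\pm}$ by $C(t+\sigma)^{-2}\|\epsilon\|_{L^2}$, and absorbed into $C\|\epsilon\|_{H^{1/2}}^2+C\langle\sigma\rangle^{-4}$ by Young's inequality. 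Adding the three contributions gives \eqref{claim:1}; the proof of \eqref{claim:2} is the identical argument with $\Phi_k^{-}$ replacing $\Phi_k^+$.

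The main obstacle is the book-keeping of constants: first, verifying that the $\langle x\rangle^{-2}$ decay of $Q_{\omega_k}$ from \eqref{decay} combined with the Lipschitz scale $(t+\sigma)^{-2}$ of the cutoffs, together with the parity of the ground state, really yields the claimed $\langle\sigma\rangle^{-3}$ remainder rather than the naive $\langle\sigma\rangle^{-2}$; and second, checking that the explicit point-values $\psi_k^{\pm}(x_{k'})$ under the specific smoothings of $\Phi,\Phi^{\pm}$ sum to exactly $\frac{1}{2}$ at each relevant index $k'$. In contrast with Lemma~\ref{lemma:mass:loc}, no non-local analysis of the operator $D$ is required here, so once these constants are pinned down the estimate is comparatively elementary.
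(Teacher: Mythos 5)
Your proposal is correct and takes essentially the same route as the paper's proof: both expand $|u|^2=|R|^2+2\Re(R\bar\epsilon)+|\epsilon|^2$, use the orthogonality $\Re\int R_{k'}\bar\epsilon=0$ together with the decay \eqref{decay} and the fact that the cutoff transition regions are far from the soliton centers to reduce all error terms to $C\|\epsilon(t)\|_{H^{1/2}}^2+C\langle\sigma\rangle^{-3}$, and identify the main term from the values of the weights on the essential support of each $Q_{\omega_{k'}}$ (your only organizational difference is working with the single combined weight $\psi_k^{\pm}$ and Taylor expanding it, where the paper splits off $\int|u|^2(\Phi_k^{+}-\Phi_k)$ as a separate error). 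The caveat you rightly flag --- whether the point values of $\psi_k^{\pm}$ actually produce the factor $\tfrac12$ --- does not in fact resolve with \eqref{def:JK}--\eqref{def:Jk+} and the cutoffs exactly as printed (e.g.\ $\psi_1^{+}\equiv-\tfrac12\omega_1(0)$ has the wrong sign and an extra factor $\omega_1(0)$), but the paper's own computation silently uses swapped normalizations, so this is a defect of the statement as written rather than of your argument.
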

\begin{proof}
By the calculation, we have
\begin{align*}
\mathcal{J}^{+}_k(t)-\mathcal{J}_k(t)=&\omega_1(0)\int|u(t,x)|^2\Phi^{+}_k(x)dx-\frac{1}{2}\omega_1(0)\int|u(t,x)|^2\Phi_k(x)dx\\
=&\omega_k(0)\left(\frac{1}{2}\int|u(t,x)|^2\Phi^{+}_k(x)+\frac{1}{2}\int|u(t,x)|^2\left(\Phi^{+}_k(x)-\Phi_k\right)\right).
\end{align*}
By the definition of $\Phi$ and $\Phi^{+}$, and the decay property of $Q$, we have
\begin{align*}
    \int|R(t,x)|^2\left(\Phi^{+}_k(x)-\Phi_k\right)\leq C (t+\sigma)^{-3}.
\end{align*}
Therefore,
\begin{align*}
   \int|u(t,x)|^2\left(\Phi^{+}_k(x)-\Phi_k\right)\leq C\|\epsilon(t)\|_{H^\frac{1}{2}}^2+ C (t+\sigma)^{-3},
\end{align*}
where we used the orthogonality condition $\Re\int R_{k}(t)\epsilon(t)=0$.

On the other hand, by the orthogonality condition $\Re\int R_{k'}(t)\epsilon(t)=0$ and the algebraic decay of the $Q_{\omega_{k'}}$ (see \eqref{decay}), we have
\begin{align*}
   \left|\int|u(t,x)|^2\Phi^{+}_k-\sum_{k'=k}^2\int Q_{\omega_{k'}(t)}^2\right|\leq C\|\epsilon(t)\|_{H^\frac{1}{2}}^2+C \langle \sigma\rangle^{-3}.
\end{align*}
Hence, by the above estimates, we get \eqref{claim:1} holds. Using an argument similar to \eqref{claim:1}, we can easily obtain \eqref{claim:2}.
\end{proof}

\begin{lemma}\label{lemma:para:larg}
Assume that $\sigma$ is given by \eqref{def:sig0}. For all $t\in[0,t^*]$, it holds,
\begin{align}\label{est:para:larg}
    \sum_{k=1}^2|\omega_k(t)-\omega_k(0)|\leq C\sup_{t'\in[0,t]}\|\epsilon(t)\|_{H^\frac{1}{2}}^2+\frac{C}{\sigma}.
\end{align}
\end{lemma}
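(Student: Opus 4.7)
The strategy is to isolate each soliton's mass with the help of the localized functionals $\mathcal{J}_k^{\pm}$, and then invert the monotonicity hypothesis \eqref{condition} to convert changes in $\int Q_\omega^2$ into changes in $\omega$.

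First I would subtract the identities \eqref{claim:1} and \eqref{claim:2} to cancel $\mathcal{J}_k(t)$, producing the key identity
\begin{equation*}
\mathcal{J}_k^+(t) - \mathcal{J}_k^-(t) = \sum_{k'=k}^{2}\int Q^2_{\omega_{k'}(t)}\,dx + O\!\left(\|\epsilon(t)\|_{H^{1/2}}^2 + \langle \sigma\rangle^{-3}\right),
\end{equation*}
for each $k=1,2$. Writing this at $t$ and at $0$ and subtracting, the left-hand side becomes $(\mathcal{J}_k^+(t)-\mathcal{J}_k^+(0)) - (\mathcal{J}_k^-(t)-\mathcal{J}_k^-(0))$. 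For $k=2$ both differences on this LHS are controlled by Corollary \ref{coro:12}, so
\begin{equation*}
\left|\int Q^2_{\omega_2(t)} - \int Q^2_{\omega_2(0)}\right|\leq C\sup_{t'\in[0,t]}\|\epsilon(t')\|_{H^{1/2}}^2 + \frac{C}{\sigma}.
\end{equation*}
Since $\omega_2(t),\omega_2(0)$ lie near $\omega_2^0$ by \eqref{decom:small} and \eqref{decom:4}, the monotonicity assumption \eqref{condition} together with a Taylor expansion of $\omega\mapsto \int Q_\omega^2$ gives
\begin{equation*}
|\omega_2(t)-\omega_2(0)| \leq C\sup_{t'\in[0,t]}\|\epsilon(t')\|_{H^{1/2}}^2 + \frac{C}{\sigma},
\end{equation*}
exactly as in the single-soliton estimate \eqref{D:7}.

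To handle $\omega_1$ I would use conservation of mass. Expanding $u=R+\epsilon$, the orthogonality $\Re\int R_k\bar\epsilon=0$ from \eqref{decom:small} kills the linear-in-$\epsilon$ cross terms, and the algebraic decay \eqref{decay} combined with the separation \eqref{decom:trans} makes the soliton cross term $\Re\int R_1\bar R_2$ of order $\langle\sigma\rangle^{-2}$, so that
\begin{equation*}
\|u(t)\|_{L^2}^2 = \sum_{k=1}^{2}\int Q^2_{\omega_k(t)}\,dx + \|\epsilon(t)\|_{L^2}^2 + O(\langle\sigma\rangle^{-2}),
\end{equation*}
and likewise at $t=0$. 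Since $\|u(t)\|_{L^2}^2=\|u(0)\|_{L^2}^2$, subtracting yields
\begin{equation*}
\left|\sum_{k=1}^{2}\left(\int Q^2_{\omega_k(t)} - \int Q^2_{\omega_k(0)}\right)\right|\leq C\sup_{t'\in[0,t]}\|\epsilon(t')\|_{H^{1/2}}^2 + \frac{C}{\sigma}.
\end{equation*}
Combining with the $k=2$ bound above gives the same control for $\int Q^2_{\omega_1(t)} - \int Q^2_{\omega_1(0)}$, and inverting \eqref{condition} at $\omega_1^0$ yields the same bound for $|\omega_1(t)-\omega_1(0)|$. Summing the two bounds produces \eqref{est:para:larg}.

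The main obstacle is really the first identity: confirming that $\mathcal{J}_k^+-\mathcal{J}_k^-$ isolates the tail sum $\sum_{k'\geq k}\int Q^2_{\omega_{k'}}$ to accuracy $\|\epsilon\|_{H^{1/2}}^2+\langle\sigma\rangle^{-3}$. This is exactly what \eqref{claim:1}--\eqref{claim:2} assert, and their proof depends on the orthogonality relations of $\epsilon$ and on the pointwise decay \eqref{decay} to show that the soliton masses caught by $\Phi_k^{\pm}$ in the "wrong" region contribute at most at the level $\langle\sigma\rangle^{-3}$. Once this identity is granted, the remainder is mainly bookkeeping: Corollary \ref{coro:12} takes care of the $k=2$ half, conservation of mass takes care of the $k=1$ half, and in each case \eqref{condition} converts the $L^2$ estimate on $Q_{\omega_k}$ into the desired control on $\omega_k$.
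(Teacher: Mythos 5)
Your argument is correct and is built from the same ingredients as the paper's proof (the localized masses $\mathcal{J}_2^{\pm}$, the identities \eqref{claim:1}--\eqref{claim:2}, mass conservation to recover the $k=1$ component, and the inversion of the nondegeneracy condition \eqref{condition} via a Taylor expansion of $\omega\mapsto\int Q_\omega^2$), but you combine them differently. The paper pairs $\mathcal{J}_2^{+}$ with $\mathcal{J}_2$ and $\mathcal{J}_2$ with $\mathcal{J}_2^{-}$ separately, obtaining an upper and a lower bound on $\int Q_{\omega_2(t)}^2-\int Q_{\omega_2(0)}^2$; this route needs the two-sided control $|\mathcal{J}_2(t)-\mathcal{J}_2(0)|$ imported from the energy estimate \eqref{est:small:s}, which carries the extra terms $\sum_k|\omega_k(t)-\omega_k(0)|^2$ and therefore forces a final absorption step using the smallness in \eqref{decom:small}. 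You instead add \eqref{claim:1} and \eqref{claim:2} so that $\mathcal{J}_2$ cancels and $\int Q_{\omega_2(t)}^2-\int Q_{\omega_2(0)}^2$ is read off directly from $(\mathcal{J}_2^{+}(t)-\mathcal{J}_2^{+}(0))-(\mathcal{J}_2^{-}(t)-\mathcal{J}_2^{-}(0))$; this bypasses both the energy input and the absorption of the quadratic $\omega$-terms, which is cleaner. The one point you should make explicit: your step requires a \emph{two-sided} bound $\left|\mathcal{J}_2^{\pm}(t)-\mathcal{J}_2^{\pm}(0)\right|\leq \frac{C}{\sigma}\sup_{\tau}\|\epsilon(\tau)\|_{L^2}^2+\frac{C}{\sigma}$, whereas Corollary \ref{coro:12} is stated only as an upper bound, and knowing $a\leq M$ and $b\leq M$ does not control $a-b$. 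The gap is harmless here because the proof of Lemma \ref{lemma:mass:loc} actually estimates $\left|\frac{d}{dt}\int|u|^2\Phi_2\,dx\right|$ and hence yields the absolute-value version (which the paper itself invokes as \eqref{est:p:2}), but as written your proposal silently upgrades a one-sided statement to a two-sided one.
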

\begin{proof}
From Lemma \ref{lemma:mass:loc} and Corollary \ref{coro:12}, we have
\begin{align}\label{est:p:1}
    |\mathcal{J}_2(t)-\mathcal{J}_2(0)|\leq C\sup_{t'\in[0,t]}\|\epsilon(t)\|_{H^\frac{1}{2}}^2+C\sum_{k=1}^2|\omega_k(t)-\omega_k(0)|^2+C\langle \sigma\rangle^{-2},
\end{align}
and
\begin{align}\label{est:p:2}
  \left|\mathcal{J}^{\pm}_2(t)-\mathcal{J}^{\pm}_2(0)\right|\leq& \frac{C}{\sigma}\sup_{t'\in[0,t]}\|\epsilon(t')\|_{L^2}^2+\frac{C}{\sigma}.
\end{align}
Combining \eqref{claim:1}, \eqref{est:p:1} and \eqref{est:p:2}, we have,
\begin{align*}
    \left(\int Q_{\omega_2(t)}^2-Q_{\omega_2(0)}^2\right)\leq& 2\left[(\mathcal{J}_2^+(t)-\mathcal{J}_2(t))-(\mathcal{J}_2^+(0)-\mathcal{J}_2(0))\right]+C\sup_{t'\in[0,t]}\|\epsilon(t)\|_{H^\frac{1}{2}}^2+C\langle \sigma\rangle^{-2}\\
    \leq&2\left[(\mathcal{J}_2^+(t)-\mathcal{J}_2(t))]+2[(\mathcal{J}_2^+(0)-\mathcal{J}_2(0))\right]+C\sup_{t'\in[0,t]}\|\epsilon(t)\|_{H^\frac{1}{2}}^2+C\langle \sigma\rangle^{-2}\\
    \leq&C\sum_{k=1}^2|\omega_k(t)-\omega_k(0)|^2+C\sup_{t'\in[0,t]}\|\epsilon(t)\|_{H^\frac{1}{2}}^2+\frac{C}{\sigma^2}.
\end{align*}
Similarly,
\begin{align*}
    -\left(\int Q_{\omega_2(t)}^2-Q_{\omega_2(0)}^2\right)\leq& 2\left[(\mathcal{J}_2^-(t)-\mathcal{J}_2(t))-(\mathcal{J}_2^-(0)-\mathcal{J}_2(0))\right]+C\sup_{t'\in[0,t]}\|\epsilon(t)\|_{H^\frac{1}{2}}^2+C\langle \sigma\rangle^{-2}\\
    \leq&2\left[(\mathcal{J}_2^-(t)-\mathcal{J}_2(t))]+2[(\mathcal{J}_2^-(0)-\mathcal{J}_2(0))\right]+C\sup_{t'\in[0,t]}\|\epsilon(t)\|_{H^\frac{1}{2}}^2+C\langle \sigma\rangle^{-2}\\
    \leq&C\sum_{k=1}^2|\omega_k(t)-\omega_k(0)|^2+C\sup_{t'\in[0,t]}\|\epsilon(t)\|_{H^\frac{1}{2}}^2+\frac{C}{\sigma^2}.
\end{align*}
Therefore, we deduce, for all $k=2,\ldots,K$,
\begin{align}\label{est:p:3}
    \left|\left(\int Q_{\omega_2(t)}^2-Q_{\omega_2(0)}^2\right)\right|\leq C\sum_{k=1}^2|\omega_k(t)-\omega_k(0)|^2+C\sup_{t'\in[0,t]}\|\epsilon(t)\|_{H^\frac{1}{2}}^2+\frac{C}{\sigma}.
\end{align}
On the other hand, by the mass conservation \eqref{Mass} and the orthogonality conditions on $\epsilon$ (see \eqref{ortho:condition}), we have
\begin{align*}
    \left|\left(\int Q_{\omega_1(t)}^2-Q_{\omega_1(0)}^2\right)\right|\leq C\sup_{t'\in[0,t]}\|\epsilon(t)\|_{H^\frac{1}{2}}^2+C\langle \sigma\rangle^{-1}.
\end{align*}
This means that \eqref{est:p:3} is true for $k=1$.

Recall that $\omega_k(t)$, $\omega_k(0)$ are close to $\omega_k^0$ (see \eqref{decom:small}), then for any $k=1,2$,
\begin{align}\label{est:p:5}
    |\omega_k(t)-\omega_k(0)|\leq C\left|\int Q_{\omega_k(t)}^2-Q_{\omega_k(0)}^2\right|.
\end{align}
In particular, from \eqref{est:p:3} and \eqref{est:p:5}, we obtain
\begin{align*}
    |\omega_2(t)-\omega_2(0)|\leq C\sum_{k=1}^2|\omega_k(t)-\omega_k(0)|^2+C\sup_{t'\in[0,t]}\|\epsilon(t)\|_{H^\frac{1}{2}}^2+\frac{C}{\sigma}.
\end{align*}
Then, by a backward induction argument on $k$, using \eqref{est:p:3} and \eqref{est:p:5}, we deduce,
\begin{align*}
    |\omega_1(t)-\omega_1(0)|\leq C\sum_{k=1}^2|\omega_k(t)-\omega_k(0)|^2+C\sup_{t'\in[0,t]}\|\epsilon(t)\|_{H^\frac{1}{2}}^2+\frac{C}{\sigma}.
\end{align*}
Thus, for any $k=1,2$,
\begin{align*}
    |\omega_k(t)-\omega_k(0)|\leq C\sup_{t'\in[0,t]}\|\epsilon(t)\|_{H^\frac{1}{2}}^2+\frac{C}{\sigma}.
\end{align*}
This completes the proof of Lemma \ref{lemma:para:larg}.
\end{proof}

\begin{proof}[\textbf{Conclusion of the proof of Proposition \ref{proposition:1}.}] Combining the conclusions of the Lemma \ref{lemma:energy} and Lemma \ref{lemma:para:larg}, we obtain, for all $t\in[0,t^*]$,
\begin{align*}
    \|\epsilon\|_{H^\frac{1}{2}}^2\leq \frac{C}{\sigma}\sup_{t'\in[0,t]}\|\epsilon(t')\|_{L^2}^2+C\sup_{t'\in[0,t]}\left[\beta\left(\|\epsilon(t)\|_{H^\frac{1}{2}}\right)\|\epsilon(t)\|_{H^\frac{1}{2}}^2\right]+C\|\epsilon(0)\|_{H^\frac{1}{2}}^2+\frac{C}{\sigma}.
\end{align*}
For $\sigma$ large enough, we have for all $t\in[0,t^*]$,
\begin{align*}
    \|\epsilon(t)\|_{H^\frac{1}{2}}^2\leq \frac{1}{2}\sup_{t'\in[0,t]}\|\epsilon(t)\|_{H^\frac{1}{2}}^2+\|\epsilon(0)\|_{H^\frac{1}{2}}^2+\frac{C}{\sigma}.
\end{align*}
and so, for all $t\in[0,t^*]$,
\begin{align*}
     \|\epsilon(t)\|_{H^\frac{1}{2}}^2\leq C\|\epsilon(0)\|_{H^\frac{1}{2}}^2+\frac{C}{\sigma}.
\end{align*}
Using \eqref{est:para:larg}, we obtain
\begin{align*}
    \|\epsilon(t)\|_{H^\frac{1}{2}}^2+\sum_{k=1}^2|\omega_k(t)-\omega_k(0)|\leq C\|\epsilon(0)\|_{H^\frac{1}{2}}^2+\frac{C}{\sigma}.
\end{align*}
By \eqref{decom:small} and \eqref{decom:4}, we obtain
\begin{align*}
   \|\epsilon(t)\|_{H^\frac{1}{2}}^2+\sum_{k=1}^2|\omega_k(t)-\omega_k(0)|+\sum_{k=1}^2|\omega_k(0)-\omega_k^0|\leq C\alpha^2+\frac{C}{\sigma}.
\end{align*}
where $C$ is independent of $A_0$. To conclude the proof, we go back to $u(t)$,
\begin{align*}
    &\left\|u(t)-\sum_{k=1}^2Q_{\omega_k^0}(x-x_k(t))e^{i\gamma_k(t)}\right\|_{H^\frac{1}{2}}\\
    \leq&\left\|u(t)-\sum_{k=1}^2R_k(t)\right\|_{H^\frac{1}{2}}+\sum_{k=1}^2\left\|R_k(t)-Q_{\omega_k^0}(x-x_k(t))e^{i\gamma_k(t)}\right\|_{H^\frac{1}{2}}\\
    \leq&\|\epsilon(t)\|_{H^\frac{1}{2}}+C\sum_{k=1}^2|\omega_k(t)-\omega_k^0|\\
    \leq&\|\epsilon(t)\|_{H^\frac{1}{2}}+C\sum_{k=1}^2|\omega_k(t)-\omega_k(0))|+C\sum_{k=1}^2|\omega_k(0)-\omega_k^0|\\
    \leq&C_1\left(\alpha+\frac{C}{\sigma}\right).
\end{align*}
Notice that $C>0$ does not depend on $A_0$. Thus, we can choose $A_0=2C_1$, then $\alpha_0>0$ small enough and $\sigma_0$ large enough, and we obtain the conclusion of Proposition \ref{proposition:1}.
\end{proof}

\appendix
\section{Appendix}
In this section, we give the following local well-posedness result concerning the Cauchy problem
for half-wave equation \eqref{equ-hf}.
\begin{lemma}\label{Lemma:local}
Let $s>\frac{1}{2}$ be given. For every initial datum $u_0\in H^s(\mathbb{R})$, there
exists a unique solution $u\in C^0([t_0, T); H^s(\mathbb{R})$ of problem \eqref{equ-hf}. Here $t_0<T(u_0)\leq+\infty$ denotes its maximal time of existence (in forward time). Moreover,
we have the following properties.

(i) Conservation of $L^2$-mass, energy and linear momentum: It holds that
\begin{align*}
    M(u)=\int|u|^2,~~E(u)=\frac{1}{2}\int|D^{\frac{1}{2}}u|^2-\frac{1}{p+1}\int|u|^{p+1},~~P(u)=\int\bar{u}(-i\partial_x u),
\end{align*}
are conserved along the flow.

(ii) Blowup alternative in $H^{s}$: Either $T(u_0)=+\infty$ or, if $T(u_0)<+\infty$, then $\|u(t)\|_{H^{s}}\to\infty$ as $t\to T^-$.

(iii) Continuous dependence: The flow map $u_0\mapsto u(t)$ is Lipschitz continuous on bounded subsets of $H^s(\mathbb{R})$.

(iv) Global Existence: If $1<p<3$, then $T(u_0)=+\infty$ holds true.
\end{lemma}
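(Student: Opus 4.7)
The plan is to prove Lemma \ref{Lemma:local} via the standard contraction mapping approach applied to the Duhamel formulation
\begin{align*}
u(t)=e^{-itD}u_0+i\int_0^t e^{-i(t-\tau)D}\bigl(|u|^{p-1}u\bigr)(\tau)\,d\tau,
\end{align*}
working in the space $X_T:=C([0,T];H^s(\mathbb{R}))$ equipped with the norm $\|u\|_{X_T}=\sup_{t\in[0,T]}\|u(t)\|_{H^s}$. The linear half-wave group $e^{-itD}$ (defined through the Fourier multiplier $e^{-it|\xi|}$) is a one-parameter group of isometries on every $H^s(\mathbb{R})$, so all the work is concentrated in controlling the nonlinear term. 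First I would establish the basic nonlinear estimate $\bigl\||u|^{p-1}u\bigr\|_{H^s}\leq C(\|u\|_{H^s})\|u\|_{H^s}$ for $s>1/2$: since $H^s(\mathbb{R})\hookrightarrow L^\infty(\mathbb{R})$ is continuous and $H^s$ is a Banach algebra in this range, one may combine Kato--Ponce type fractional Leibniz estimates with the Moser composition inequality for the smooth (away from $0$) function $z\mapsto|z|^{p-1}z$ to obtain a local Lipschitz estimate $\bigl\||u|^{p-1}u-|v|^{p-1}v\bigr\|_{H^s}\leq C(\|u\|_{H^s}+\|v\|_{H^s})\|u-v\|_{H^s}$.

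Plugging these bounds into the Duhamel map and taking $T$ small depending on $\|u_0\|_{H^s}$ gives a contraction on a closed ball of $X_T$, producing a unique local-in-time solution. The blowup alternative (ii) and Lipschitz continuous dependence (iii) follow from the standard reopening of the contraction argument: if $\|u(t)\|_{H^s}$ stayed bounded on $[0,T(u_0))$ we could restart the scheme past $T(u_0)$, contradicting maximality; and applying the same fixed-point estimate to the difference of two solutions emanating from $u_0,v_0$ yields the Lipschitz estimate on bounded subsets. For (i) I would prove the conservation of $M$, $E$, $P$ first formally by differentiating under the flow (multiplying \eqref{equ-hf} by $\bar u$, $\partial_t\bar u+i\omega\bar u$-type combinations, and $\partial_x\bar u$, respectively, taking real/imaginary parts and using self-adjointness of $D$), and then justify rigorously by regularizing via the Yosida-type approximation $D_\varepsilon=D/(1+\varepsilon D)$ (or by approximating $u_0$ in $H^s$ by smoother data $u_0^{(n)}\in H^{s+2}$): the approximate solutions are smooth enough for the formal manipulations to be literal, and the $H^s$ continuous-dependence from (iii) transfers the identities to $u$ in the limit.

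For the global existence claim (iv), I would use the conservation of $M$ and $E$ together with the sharp Gagliardo--Nirenberg inequality in dimension one
\begin{align*}
\|u\|_{L^{p+1}}^{p+1}\leq C\|D^{\frac{1}{2}}u\|_{L^2}^{p-1}\|u\|_{L^2}^{2},
\end{align*}
whose exponents are fixed by the scaling identities $\|D^{1/2}u_\lambda\|_{L^2}=\|D^{1/2}u\|_{L^2}$ and $\|u_\lambda\|_{L^2}^2=\lambda^{-1}\|u\|_{L^2}^2$ with $u_\lambda(x)=u(\lambda x)$. Inserting this into the definition of $E$ gives
\begin{align*}
\tfrac12\|D^{\frac{1}{2}}u(t)\|_{L^2}^2\leq E(u_0)+\tfrac{C}{p+1}\|D^{\frac{1}{2}}u(t)\|_{L^2}^{p-1}\|u_0\|_{L^2}^{2},
\end{align*}
and in the regime $1<p<3$ the exponent $p-1<2$ makes the quadratic term dominate, so $\|D^{1/2}u(t)\|_{L^2}$ stays uniformly bounded on the whole existence interval. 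Combined with the conservation of $M$ this yields an a priori global $H^{1/2}$ bound. The main obstacle is then upgrading this to an a priori bound on $\|u(t)\|_{H^s}$ for $s>1/2$: I would close a Gronwall argument on $\|u(t)\|_{H^s}$ by differentiating in $t$ along the Duhamel representation, using the nonlinear $H^s$-estimate above combined with the uniform $L^\infty$-in-time bound on $\|u(t)\|_{H^{1/2}}$ to replace the full $H^s$ norm by a lower Sobolev norm in the prefactor (via an interpolation / paraproduct decomposition of $|u|^{p-1}u$). This prevents finite-time blow-up of $\|u(t)\|_{H^s}$, and combined with the blowup alternative (ii) gives $T(u_0)=+\infty$. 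The delicate point here — and the step I expect to absorb most of the technical work — is the paraproduct/Kato--Ponce manipulation that isolates a linear-in-$\|u\|_{H^s}$ dependence so that Gronwall's inequality actually closes; the algebra estimate alone only gives $p$-th power growth, which is not enough.
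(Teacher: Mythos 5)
The paper itself gives no argument for this lemma beyond the citation ``by the similar argument as \cite[Lemma D.1]{KLR2013ARMA}'', so your contraction-mapping outline is in the spirit of the intended proof rather than a departure from it. The local theory, the regularization of the conservation laws, the blow-up alternative, and the Gagliardo--Nirenberg a priori bound $\|D^{1/2}u(t)\|_{L^2}\leq C(M(u_0),E(u_0))$ for $1<p<3$ are all set up correctly (your scaling computation fixing the exponents $(p-1,2)$ is right, and $p-1<2$ is exactly what makes the energy coercive).

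There is, however, a genuine gap in the last step of (iv), and it is precisely where you concede the work lies. Your plan is to use the uniform $H^{1/2}$ bound to ``replace the full $H^s$ norm by a lower Sobolev norm in the prefactor'' and thereby obtain a linear Gronwall inequality for $\|u(t)\|_{H^s}$. This cannot work as stated, because $H^{1/2}(\mathbb{R})$ does \emph{not} embed into $L^\infty(\mathbb{R})$: the fractional chain rule gives $\||u|^{p-1}u\|_{H^s}\lesssim\|u\|_{L^\infty}^{p-1}\|u\|_{H^s}$, and the conserved quantities do not control $\|u\|_{L^\infty}$. If you interpolate instead, $\|u\|_{L^\infty}\lesssim\|u\|_{H^{s'}}\lesssim\|u\|_{H^{1/2}}^{1-\theta}\|u\|_{H^s}^{\theta}$ with $\theta=(s'-\frac12)/(s-\frac12)>0$, you land on a differential inequality $y'\leq Cy^{1+(p-1)\theta}$ with a strictly superlinear exponent, which is compatible with finite-time blow-up of $y=\|u(t)\|_{H^s}$; so the ``paraproduct/Kato--Ponce manipulation that isolates a linear dependence'' you hope for does not exist. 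The standard repair (and the one used in the cited reference) is the Brezis--Gallou\"et logarithmic inequality
\begin{align*}
\|u\|_{L^\infty}\lesssim \|u\|_{H^{1/2}}\left(1+\log\left(2+\|u\|_{H^s}\right)\right)^{1/2},
\end{align*}
which turns the Gronwall inequality into $y'\leq C\,y\left(1+\log(2+y)\right)^{(p-1)/2}$; since $(p-1)/2\leq1$ for $p\leq3$ this satisfies the Osgood condition and integrates to a (double-exponential) global bound, closing (iv). You should state this explicitly rather than leaving it as an unresolved ``delicate point''. A secondary caveat: for $1<p<2$ the map $z\mapsto|z|^{p-1}z$ is only $C^1$ with $(p-1)$-H\"older derivative, so the claimed Lipschitz difference estimate $\||u|^{p-1}u-|v|^{p-1}v\|_{H^s}\lesssim\|u-v\|_{H^s}$ is not automatic; the usual remedy is to close the contraction in the weaker metric of $C([0,T];L^2)$ on a ball of $C([0,T];H^s)$, which also weakens the topology in which part (iii) holds.
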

\begin{proof}
    By the similar argument as \cite[Lemma D.1]{KLR2013ARMA}, we can obtain this lemma. Here we omit it.
\end{proof}
\renewcommand{\proofname}{\bf Proof.}

\noindent
{\bf Acknowledgments.}

 This work was
 supported by  the National Natural Science
Foundation of China (12301090).

\vspace{0.2cm}
\noindent
{\bf Data Availability}

We do not analyse or generate any datasets, because our work proceeds within a theoretical
and mathematical approach.

\vspace{0.2cm}
\noindent
{\bf Conflict of interest}

The authors declare no potential conflict of interest.


\vspace*{.5cm}



\bigskip


\begin{flushleft}
Yuan Li,\\
School of Mathematics and Statistics, Lanzhou University,
Lanzhou, 730000, Gansu, Peoples Republic of China,\\
E-mail: li\_yuan@lzu.edu.cn
\end{flushleft}


\bigskip

\medskip

\end{document}